\newcommand{\ld}{\mathrm{L}}
\newcommand{\R}{\mathbb{R}}
\newcommand{\Z}{\mathbb{Z}}
\newcommand{\N}{\mathbb{N}}
\newcommand{\Om}{\Omega}
\newcommand{\G}{\Gamma}
\newcommand{\La}{\Lambda}
\newcommand{\g}{\gamma}
\newcommand{\la}{\lambda}
\newcommand{\supp}{\mathrm{supp}}
\newcommand{\act}{\curvearrowright}
\newcommand{\gint}{\nabla^{\mathrm{int}}}
\newcommand{\gsup}{\nabla^{\sup}}
\newcommand\restr[2]{{% we make the whole thing an ordinary symbol
  \left.\kern-\nulldelimiterspace % automatically resize the bar with \right
  #1 % the function
  \vphantom{|} % pretend it's a little taller at normal size
  \right|_{#2} % this is the delimiter
  }}
\newtheorem{theorem}{Theorem}[section]
\newtheorem*{theorem*}{Theorem}
\newtheorem*{corollary*}{Corollary}
\newtheorem{proposition}[theorem]{Proposition}
\newtheorem{lemma}[theorem]{Lemma}
\newtheorem*{claim*}{Claim}
\newtheorem{theoremletter}{Theorem}
\theoremstyle{definition}
\newtheorem{definition}[theorem]{Definition}
\newtheorem{remark}[theorem]{Remark}
\title{Obstructions for quantitative measure equivalence between locally compact groups}
\author{Corentin Correia and Juan Paucar}
\date{\today}
\begin{document}

\maketitle

\begin{abstract}
    Given a measure equivalence coupling between two finitely generated groups, Delabie, Koivisto, Le Maître and Tessera have found explicit upper bounds on how integrable the associated cocycles can be. We extend these results to the broader framework of unimodular compactly generated locally compact groups. We also generalize a result by the first-named author, showing that the integrability threshold described in these statements cannot be achieved.
\end{abstract}

{
		\small	
		\noindent\textbf{{Keywords:}} Quantitative measure equivalence, locally compact groups, isoperimetric profile, volume growth.
	}
	
	\smallskip
	
	{
		\small	
		\noindent\textbf{{MSC-classification:}} Primary 37A20; Secondary 22D05, 22F10, 22D40, 20F69, 20F65.
	
}

\setcounter{tocdepth}{1}
\tableofcontents

\section{Introduction}

Gromov introduced measure equivalence for countable groups as a measurable analogue of quasi-isometry.

\begin{definition}
    Two countable groups $\G$ and $\La$ are \textit{measure equivalent} if there exists a standard measured space $(\Omega,\mu)$ equipped with commuting measure-preserving $\G$- and $\La$-actions such that
    \begin{enumerate}
        \item both actions are free;
        \item the $\G$- and $\La$-actions admit Borel fundamental domains $X_{\G}$ and $X_{\La}$ respectively;
        \item $X_{\G}$ and $X_{\La}$ have finite measure.
    \end{enumerate}
    The quadruple $(\Omega,X_{\G},X_{\La},\mu)$ is called a \textit{measure equivalence coupling} between $\G$ and $\La$. It is straightforward to prove that $\mu(X_{\G})$ and $\mu(X_{\La})$ are positive, and the restrictions of $\mu$ to the fundamental domains $X_{\G}$ and $X_{\La}$ are respectively denoted by $\nu_{\G}$ and $\nu_{\La}$.
\end{definition}

The most natural example to keep in mind is that of lattices in the same locally compact group. Another instance of measure equivalence comes from ergodic theory.

\begin{definition}[Dye~\cite{dyeGroupsMeasurePreserving1959}]
    Two countable groups $\G$ and $\La$ are \textit{orbit equivalent} if there exist free probability measure-preserving $\G$- and $\La$-actions on a standard probability space $(X,\mu)$ such that for almost every $x\in X$, $\G\cdot x=\La\cdot x$.
\end{definition}

Two groups are orbit equivalent if and only if they are measure equivalent with common fundamental domains (see~\cite{furmanSurveyMeasuredGroup2011} for a proof).\par
This paper deals with measure equivalence in the setting of locally compact groups, a precise definition in this context will be given in Section~\ref{sec:Prel:ME}. Measure and orbit equivalences have already been studied for such groups, see e.g.~\cite{baderIntegrableMeasureEquivalence2013,carderiOrbitFullGroups2017,koivistoMeasureEquivalenceNonunimodular2021,koivistoMeasureEquivalenceCoarse2021,paucarIsoperimetricProfilesRegular2024,delabie$mathrmL^p$MeasureEquivalence2025}, and it is worth mentioning that measure equivalence and orbit equivalence are the same notions among non-discrete, locally compact, second countable groups~\cite[Theorem~A]{koivistoMeasureEquivalenceCoarse2021}. Before stating our main results, we briefly review the discrete case\par
Many results have been obtained for countable non-amenable groups, most of which describe rigidity phenomena. We refer the reader to the works of
\begin{itemize}
    \item Furman on lattices in higher rank semi-simple Lie groups~\cite{furmanGromovsMeasureEquivalence1999};
    \item Kida on mapping class groups~\cite{kidaMappingClassGroup2008,kidaMeasureEquivalenceRigidity2010};
    \item Guirardel and Horbez on $\mathrm{Out}(F_N)$~\cite{guirardelMeasureEquivalenceRigidity2021};
    \item Horbez and Huang on right-angled Artin groups~\cite{horbezMeasureEquivalenceClassification2022,horbezMeasureEquivalenceClassification2024};
    \item Escalier and Horbez on graph products~\cite{escalierGraphProductsMeasure2024}.
\end{itemize}

However, Ornstein and Weiss proved that any two free probability measure-preserving actions of infinite amenable groups are orbit equivalent~\cite{ornsteinErgodicTheoryAmenable1980}. In particular, measure equivalence is trivial for such groups. In the non-discrete setting, Koivisto, Kyed and Raum proved that the class of amenable, locally compact, second countable consists in three measure equivalence classes: compact groups, non-compact unimodular amenable groups and non-unimodular
amenable groups~\cite[Theorem~B]{koivistoMeasureEquivalenceCoarse2021}. To counter this flexibility phenomenon, we impose restrictions on the functions arising from measure equivalence couplings, namely the \textit{cocycles}.

\begin{definition}
Let $(\Omega,X_{\G},X_{\La},\mu)$ be a measure equivalence coupling between discrete groups $\G$ and $\La$. For every $\g\in\G$ and $\la\in\La$, and for almost every $x_{\G}\in X_{\G}$ and $x_{\La}\in X_{\La}$, there exist unique $\alpha(\g,x_{\La})\in\La$ and $\beta(\la,x_{\G})\in\G$ such that
$$\alpha(\g,x_{\La})\ast \g\ast x_{\La}\in X_{\La}\text{ and }\beta(\la,x_{\G})\ast \la\ast x_{\G}\in X_{\G},$$
where uniqueness follows from the fact that $X_{\G}$ and $X_{\La}$ are fundamental domains for the $\G$- and the $\La$-actions respectively. The measurable maps $\alpha\colon \G\times X_{\La}\to \La$ and $\beta\colon \La\times X_{\G}\to \G$ are the \textit{cocycles} associated to this coupling.
\end{definition}

We now assume that $\G$ and $\La$ are finitely generated groups, with finite generating subsets $S_{\G}$ and $S_{\La}$, and we denote by $|.|_{S_{\G}}$ and $|.|_{S_{\La}}$ the associated word metric.\par
Given $p,q\in [0,+\infty]$, an $(\ld^p,\ld^q)$ \textit{measure equivalence coupling} from $\G$ to $\La$ is a measure equivalence coupling such that for every $\g\in\G$, the map $|\alpha(\g,.)|_{S_{\La}}\colon X_{\La}\to\N$ is $\ld^p$, and for every $\la\in\La$, the map $|\beta(\la,.)|_{S_{\G}}\colon X_{\G}\to\N$ is $\ld^q$. In particular, $\ld^0$ means that there is no requirement on the corresponding cocycle. We can more generally define the notion of $(\varphi,\psi)$\textit{-integrability} for any maps $\varphi,\psi\colon\R_+\to\R_+$. We will give the definition in Section~\ref{sec:Prel:ME}, where we define measure equivalence in the more general setting of unimodular locally compact second countable (lcsc) groups. For instance, $\ld^p$ is exactly $\varphi$-integrability when $\varphi(t)=t^p$.\par
In~\cite{shalomHarmonicAnalysisCohomology2004}, Shalom was the first to study quantitative measure equivalence, more precisely the case of $\ld^{\infty}$ measure equivalence, for amenable groups. His goal was to prove the quasi-isometry invariance of Betti numbers for nilpotent groups. We say that a measure equivalence coupling $(\Omega,X_{\G},X_{\La},\mu)$ is \textit{mutually cobounded} if there exist finite subsets $F_{\La}\subset\La$ and $F_{\G}\subset\G$ such that $X_{\G}\subset F_{\La}\cdot X_{\La}$ and $X_{\La}\subset F_{\G}\cdot X_{\G}$.

\begin{theorem}[\cite{shalomHarmonicAnalysisCohomology2004}]\label{th:shalom}
	Two countable amenable groups are quasi-isometric if and only if there exists an $\ld^{\infty}$ mutually cobounded measure equivalence coupling between them.
\end{theorem}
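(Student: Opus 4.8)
The plan is to prove the two implications separately, noting first that amenability is a quasi-isometry invariant, so in both directions $\G$ and $\La$ are simultaneously amenable; the amenability hypothesis is only genuinely needed (and is essential) to produce a coupling out of a quasi-isometry. The implication from an $\ld^\infty$ mutually cobounded coupling to a quasi-isometry is soft and uses only the cocycle formalism, whereas the converse is where the real work lies.

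\textbf{From coupling to quasi-isometry.} Suppose we are given an $\ld^\infty$ mutually cobounded coupling $(\Om,X_\G,X_\La,\mu)$ with cocycles $\alpha,\beta$. The $\ld^\infty$ hypothesis provides a constant $C$ with $|\alpha(s,\cdot)|_{S_\La}\le C$ almost everywhere for every $s\in S_\G$. Since $\G$ and $\La$ are countable, the a.e.\ cocycle identity $\alpha(\g_1\g_2,x)=\alpha(\g_1,\g_2\cdot x)\,\alpha(\g_2,x)$—where $\g\cdot x\in X_\La$ denotes the induced action on the fundamental domain—holds simultaneously for all pairs $(\g_1,\g_2)$ on a single conull set; I fix a generic basepoint $x_0$ there and set $\phi(\g)=\alpha(\g,x_0)$. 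Writing a geodesic word $\g=s_1\cdots s_n$ and iterating the identity gives $|\phi(\g)|_{S_\La}\le C\,|\g|_{S_\G}$, so $\phi$ is coarsely Lipschitz. The same construction applied to $\beta$ yields a coarsely Lipschitz $\psi\colon\La\to\G$, and the a.e.\ relation expressing that $\alpha$ and $\beta$ are mutually inverse up to the fundamental-domain identification forces $\psi\circ\phi$ to stay within bounded distance of $\mathrm{id}_\G$; this upgrades the Lipschitz bound to a two-sided quasi-isometric embedding estimate. Finally, mutual coboundedness $X_\G\subset F_\La\cdot X_\La$ and $X_\La\subset F_\G\cdot X_\G$ forces $\phi$ to be coarsely surjective, so $\phi$ is a quasi-isometry.

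\textbf{From quasi-isometry to coupling.} Assume now that $\G$ and $\La$ are amenable and quasi-isometric via $f\colon\G\to\La$. The strategy is Gromov's: first build a \emph{topological} coupling and then use amenability to upgrade it to a measured one. I would take $\Om_0$ to be a space of maps $\G\to\La$ lying at uniformly bounded distance from the translates of $f$, topologised by pointwise convergence; since the relevant coordinates range over balls of fixed radius—hence finite subsets of $\La$—the space is compact, and $\G$ and $\La$ act continuously and commutingly (precomposition by $\G$-translations, postcomposition by $\La$-translations), the quasi-isometry bounds ensuring that both actions are cocompact with clopen fundamental domains. Amenability then supplies the measure: as $\G\times\La$ is amenable, averaging a base measure over a Følner sequence and passing to a weak-$*$ limit produces a $\G\times\La$-invariant Borel measure, which transported to the associated infinite-measure orbit space gives $\Om$. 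On it the fundamental domains inherited from the clopen sets have finite measure, the cocycles are bounded on generators precisely because all identifications were made within bounded distance of $f$ (this is the $\ld^\infty$ condition), and cocompactness is exactly mutual coboundedness.

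\textbf{Main obstacle.} The crux is this converse direction, and within it the construction of an invariant measure that simultaneously is preserved by both commuting actions, assigns finite positive mass to the fundamental domains, and is compatible with \emph{free} actions. Invariance under both groups is the irreplaceable use of the hypothesis: it follows from amenability of $\G\times\La$, or equivalently by averaging first over $\G$ and then over $\La$, the second averaging preserving the first invariance because the two actions commute. Freeness is the subtlest issue, since the naive topological coupling need not act freely; the standard remedy is to take a product with an auxiliary free measure-preserving action (such as a Bernoulli shift of $\G\times\La$) before extracting the invariant measure, and then to check that this perturbation destroys neither the $\ld^\infty$ bound nor coboundedness. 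Ensuring that the weak-$*$ limit still charges the fundamental domains with finite positive mass, rather than letting mass escape to infinity, is the remaining technical heart, and it is precisely the cocompactness built into the topological coupling that guarantees it.
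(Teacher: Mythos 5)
This statement is not proved in the paper at all: it is Shalom's theorem, quoted as background with a citation to \cite{shalomHarmonicAnalysisCohomology2004}, so there is no in-paper argument to compare yours against. On its own terms, your proposal follows the standard Gromov--Shalom route (cocycles give quasi-isometries in one direction; a topological coupling plus F\o lner averaging gives the measured coupling in the other), and the architecture is right, including the correct identification of the two genuinely delicate points in the converse direction (non-escape of mass in the weak-$*$ limit, and freeness via a Bernoulli factor).

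There is, however, one concrete false step. The space $\Om_0$ of maps $\G\to\La$ at uniformly bounded distance from the \emph{translates} $\la\circ f\circ\g^{-1}$ is not compact: as $\la$ ranges over $\La$, the value $h(1_\G)$ ranges over an infinite subset of $\La$, so the coordinate at $1_\G$ does not live in a finite set. (Indeed it cannot be compact: if an infinite group acts freely with a clopen fundamental domain $U$, then $\Om_0=\bigsqcup_\g \g U$ is an open cover with no finite subcover.) The correct statement is that $\Om_0$ is locally compact and the $\G\times\La$-action is cocompact, with the compact slice $\{h: h(1_\G)\in F\}$ for a suitable finite $F\subset\La$ witnessing cocompactness; this is exactly the compact set against which one must normalise the F\o lner averages to prevent the limit measure from vanishing. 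With that correction the rest of your sketch goes through, though I would also note that in the first direction the coboundedness hypothesis is needed not only for coarse surjectivity of $\phi$ but already to make sense of the claim that $\alpha$ and $\beta$ are mutually inverse up to bounded error: passing from a point of $X_\La$ to a point of $X_\G$ at which $\beta$ can be evaluated uses the inclusion $X_\La\subset F_\G\cdot X_\G$.
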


Historically, the notion of $\ld^p$ measure equivalence, for $p\ge 1$, was introduced in~\cite{baderIntegrableMeasureEquivalence2013}, and more generally $(\varphi,\psi)$-integrable measure equivalence was first defined in~\cite{delabieQuantitativeMeasureEquivalence2022} to study the weaker notion of $\ld^p$ orbit equivalence for $p<1$. For weaker assumptions than $\ld^{\infty}$, which geometric properties of amenable groups can be captured? In~\cite{austinIntegrableMeasureEquivalence2016}, Austin first proved that integrably measure equivalent groups of polynomial growth have bi-Lipschitz equivalent asymptotic cones.

\paragraph{Behaviour of volume growth.} Many finer rigidity results have been obtained, involving the volume growth. Given a finitely generated group $\G$, with a finite generating subset $S_{\G}$, the \textit{volume growth} with respect to $S_{\G}$ is the map $V_{S_{\G}}\colon\N\to\N$ defined by
$$V_{S_{\G}}(n)=|\{\g\in \G\mid |\g|_{S_{\G}}\leq n\}|.$$
It is straightforward to show that two different finite generating sets yield asymptotically equivalent growth functions, and the common asymptotic behaviour is denoted by $V_{\G}$.\par
For instance, $V_{\Z^d}(n)\simeq n^d$. More generally, a celebrated result of Gromov~\cite{gromovGroupsPolynomialGrowth1981} states that virtually nilpotent groups are exactly the groups with polynomial growth. Given non-trivial groups $F$ and $\G$, we define the wreath product $F\wr\G$ as
$$F\wr\G=\left (\bigoplus_{\G}{F}\right)\rtimes\G$$
where the action of $\G$ on the direct sum is induced by the action of $\G$ on itself by left translation. It is well known that if $F$ and $\G$ are finitely generated, then so is $F\wr\G$. If moreover $F$ is not trivial and $\G$ is infinite, then $V_{F\wr\G}(n)\simeq e^n$.\par
The notion of volume growth also exists in the case of compactly generated locally compact groups, see Section~\ref{sec:conventions}.\par
In the appendix of~\cite{austinIntegrableMeasureEquivalence2016}, Bowen proved an obstruction for integrable measure equivalence, using the notion of volume growth of finitely generated groups.

\begin{theorem}[{\cite[Theorem~B.2]{austinIntegrableMeasureEquivalence2016}}]
Let $\Gamma$ and $\Lambda$ be finitely generated groups. If $\Gamma$ and $\Lambda$ are $\ld^{1}$ measure equivalent, then $V_{\Gamma}(n)\simeq V_{\Lambda}(n)$.
\end{theorem}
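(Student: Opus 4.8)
The plan is to show that the two cocycles, integrated against the coupling measure, give mutually inverse-up-to-constant comparisons of the volume growth functions. Concretely, set $a = \nu_{\G}(X_{\G}) = \mu(X_{\G})$ and $b = \nu_{\La}(X_{\La}) = \mu(X_{\La})$, both finite and positive. The key quantities to control are the averaged cocycle lengths. Using $\ld^1$-integrability, for each $\g \in \G$ the function $x_{\La} \mapsto |\alpha(\g, x_{\La})|_{S_{\La}}$ is integrable on $X_{\La}$, and similarly for $\beta$. First I would establish the cocycle identities $\alpha(\g_1\g_2, x) = \alpha(\g_1, \g_2 \cdot x)\,\alpha(\g_2, x)$ (and its analogue for $\beta$), which hold almost everywhere and follow directly from the defining property that $\alpha(\g, x_{\La}) \ast \g \ast x_{\La} \in X_{\La}$; these give the subadditivity estimate $|\alpha(\g_1\g_2, x)|_{S_{\La}} \le |\alpha(\g_1, \g_2 \cdot x)|_{S_{\La}} + |\alpha(\g_2, x)|_{S_{\La}}$.

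Next I would produce the core counting inequality. The idea is that the map $\alpha(\cdot, x_{\La})$ pushes a ball in $\G$ into a controlled region of $\La$, and one compares how much measure this can cover. I would count, for fixed $x_{\La}$, the image of $\{\g : |\g|_{S_{\G}} \le n\}$ under $\g \mapsto \alpha(\g, x_{\La})$, noting that distinct $\g$ landing in the same $\La$-translate are constrained by the freeness of the actions and the fundamental-domain property. The cleanest route is a Fubini/averaging argument: integrate the indicator that $\alpha(\g, x_{\La})$ lies in a ball of radius $R$ over $x_{\La} \in X_{\La}$, sum over $\g$ in the $\G$-ball of radius $n$, and use that the $\G$-orbits tile $\Omega$ by $X_{\G}$-translates so that the total measure is comparable to $V_{\G}(n) \cdot a$. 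Choosing $R$ proportional to $n$ (since the averaged displacement $\int_{X_{\La}} |\alpha(\g, x_{\La})|_{S_{\La}}\, d\nu_{\La}$ grows at most linearly in $|\g|_{S_{\G}}$ by subadditivity and a Markov-type bound) should yield $V_{\G}(n) \lesssim V_{\La}(Cn)$ for some constant $C$.

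By symmetry, running the identical argument with the roles of $\G$ and $\La$ exchanged and using $\beta$ in place of $\alpha$ gives $V_{\La}(n) \lesssim V_{\G}(C'n)$. Combining the two inequalities yields $V_{\G}(n) \simeq V_{\La}(n)$ in the sense of asymptotic equivalence, which is exactly the conclusion. The translation-invariance of the estimate under changing generating sets is automatic since the $\simeq$ relation already absorbs multiplicative and linear-rescaling factors.

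The main obstacle, and the step deserving the most care, is the counting/covering argument in the second paragraph: one must argue rigorously that the cocycle map does not collapse too many ball elements of $\G$ onto the same region of $\La$, i.e. that the fibers of $\g \mapsto \alpha(\g, x_{\La})$ are controlled in measure. This is where $\ld^1$-integrability is genuinely used (via a Chebyshev/Markov inequality to ensure that for a definite proportion of $x_{\La}$ the displacement $|\alpha(\g, x_{\La})|_{S_{\La}}$ stays below a linear threshold), and where the interplay between freeness, the fundamental-domain tiling, and measure-preservation must be handled precisely to avoid overcounting. Everything else—the cocycle identity, subadditivity, and the symmetric bookkeeping—is routine once this core estimate is in place.
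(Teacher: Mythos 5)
Your outline has the right global shape (averaging over the coupling, a Markov inequality to keep the cocycle displacement linearly bounded on most of the ball, then symmetry in $\G$ and $\La$), but the step you yourself flag as ``the main obstacle'' --- controlling the fibers of $\g\mapsto\alpha(\g,x_{\La})$ --- is exactly where the proof lives, and your proposal does not contain the idea that resolves it. For a general $x_{\La}$ the map $\g\mapsto\alpha(\g,x_{\La})$ is genuinely non-injective: $\alpha(\g_1,x_{\La})=\alpha(\g_2,x_{\La})=h$ only says that $h\ast\g_1\ast x_{\La}$ and $h\ast\g_2\ast x_{\La}$ both lie in $X_{\La}$, and the fiber over $h$ is in bijection with the intersection of a single $\G$-orbit with $X_{\La}$, which admits no uniform bound; so no Fubini bookkeeping over the whole ball can rule out collapse. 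The missing idea (used in Bowen's appendix and in the proof of Theorem~\ref{th:volumegrowth} in this paper) is to translate the fundamental domains so that $Y\coloneq X_{\G}\cap X_{\La}$ has positive measure and to restrict the cocycle to the \emph{return set} $R_Y^{G}(x)=\{\g\colon\g\cdot x\in Y\}$. On this set injectivity is automatic: if $\g_1,\g_2\in R_Y^{G}(x)$ and $\alpha(\g_1,x)=\alpha(\g_2,x)=h$, then $h\ast\g_1\ast x$ and $h\ast\g_2\ast x$ are two points of $X_{\G}$ lying in the same $\G$-orbit (the actions commute), hence they coincide by the fundamental-domain property, and then $\g_1=\g_2$ by freeness of the $\G$-action.

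Once you restrict to $R_Y^{G}(x)$ you need the second ingredient your sketch omits: a guarantee that the return set still occupies a definite proportion of the ball $B_{\G}(n)$. This is the Kac-type identity of Lemma~\ref{lem:returntime}, which in the discrete case reads $\sum_{\g\in B_{\G}(n)}\nu_{\La}\left(\{x\colon\g\cdot x\in Y\}\right)=\nu_{\La}(Y)\,V_{\G}(n)$. Combined with your Markov bound on the set $G_x$ of elements with linearly bounded displacement, it gives $|R_Y^{G}(x)\cap G_x\cap B_{\G}(n)|\geq K''V_{\G}(n)$ on a set of $x$ of positive measure, provided the Markov constants $K,K'$ are chosen so that $\nu_{\La}(Y)+(1-1/K)(1-1/K')-1>0$ --- this is where positivity of $\nu_{\La}(Y)$ is indispensable. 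Injectivity then maps this set into a ball $B_{\La}(Cn)$ without overcounting, yielding $V_{\G}(n)\preccurlyeq V_{\La}(Cn)$; the symmetric half with $\beta$ is as you describe. In short: the skeleton is right, but the two load-bearing ideas --- restriction to the return set to the intersection of the fundamental domains, and the return-time identity that makes this restriction harmless --- are absent, and without them the core estimate does not go through.
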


For instance, $\Z^k$ and $\Z^d$ are integrably measure equivalent if and only if $k=d$. The results of Austin and Bowen show that integrable measure equivalent preserves significant coarse geometric invariants. It is therefore natural to wonder whether these rigidity results still hold for more general quantifications. In the wider setup of $(\varphi,\psi)$-integrability, Delabie, Koivisto, Le Maître and Tessera refined Bowen's result as follows.

\begin{theorem}[{\cite[Theorem~3.1]{delabieQuantitativeMeasureEquivalence2022}}]\label{th:dklmtgrowth}
Let $\Gamma$ and $\Lambda$ be finitely generated groups. Let $\varphi\colon\R_{+}\longrightarrow \R_{+}$ be an increasing and subadditive map. If there is a $(\varphi, \ld^{0})$-integrable measure equivalence coupling from $\Gamma$ to $\Lambda$, then 
\begin{equation*}
    V_{\Gamma}(n) \preccurlyeq V_{\Lambda}(\varphi^{-1}(n)).
\end{equation*}
\end{theorem}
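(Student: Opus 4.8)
The plan is to compare two tilings of the coupling space $\Om$: the tiling by $\G$-translates of its fundamental domain $X_\G$, which computes $V_\G$, and the tiling by $\La$-translates of $X_\La$, on which the cocycle $\alpha$ and its $\varphi$-integrability live. Throughout write $B_\G(n)=\{\g:|\g|_{S_\G}\le n\}$ and $B_\La(R)$ similarly, and set $W_R=\bigsqcup_{\la\in B_\La(R)}\la\cdot X_\La$, so that $\mu(W_R)=V_\La(R)\,\nu_\La(X_\La)$, while $\mu\big(\bigsqcup_{\g\in B_\G(n)}\g\cdot X_\G\big)=V_\G(n)\,\nu_\G(X_\G)$ by freeness. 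The goal reduces to showing that, for $R$ of size $\varphi^{-1}(O(n))$, a definite proportion of $\bigsqcup_{\g\in B_\G(n)}\g\cdot X_\G$ lies inside $W_R$; comparing measures then yields $V_\G(n)\,\nu_\G(X_\G)\preccurlyeq V_\La(R)\,\nu_\La(X_\La)$, which is the desired inequality once multiplicative constants are absorbed.

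The first ingredient is a first-moment bound on the cocycle. Using the cocycle identity $\alpha(\g_1\g_2,\cdot)=\alpha(\g_1,\g_2\ast\cdot)\,\alpha(\g_2,\cdot)$, where $\ast$ denotes the measure-preserving $\G$-action induced on $(X_\La,\nu_\La)$, together with the subadditivity of the word length and the subadditivity and monotonicity of $\varphi$, I would take $\g=s_1\cdots s_k$ with $s_i\in S_\G$ and $k=|\g|_{S_\G}$, bound $\varphi(|\alpha(\g,\cdot)|_{S_\La})$ by $\sum_i\varphi(|\alpha(s_i,\cdot)|_{S_\La})$ evaluated along the $\ast$-orbit, and integrate. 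Since $\ast$ preserves $\nu_\La$, each term integrates to the same finite value, giving
\[
\int_{X_\La}\varphi\big(|\alpha(\g,\cdot)|_{S_\La}\big)\,d\nu_\La\le C\,|\g|_{S_\G},\qquad C:=\max_{s\in S_\G}\int_{X_\La}\varphi\big(|\alpha(s,\cdot)|_{S_\La}\big)\,d\nu_\La<\infty,
\]
the finiteness of $C$ being exactly the $(\varphi,\ld^{0})$-integrability hypothesis. By monotonicity of $\varphi$ and Markov's inequality, $\nu_\La\{x:|\alpha(\g,x)|_{S_\La}>R\}\le C|\g|_{S_\G}/\varphi(R)\le Cn/\varphi(R)$ for $\g\in B_\G(n)$.

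Next I would transfer this estimate to the $X_\G$-tiling. Writing a point of $\g X_\G$ as $\g z$ with $z\in X_\G$, and letting $\bar z\in X_\La$ be its $\La$-representative, a direct computation with the commuting actions shows that the $\La$-coordinate of $\g z$ equals $\ell(z)\,\alpha(\g,\bar z)^{-1}$, where $\ell(z)\in\La$ records the position of $X_\G$ relative to $X_\La$. Hence $\g z\in W_R$ as soon as $|\ell(z)|_{S_\La}\le R_0$ and $|\alpha(\g,\bar z)|_{S_\La}\le R_1$, with $R=R_0+R_1$. Summing the Markov bound over $\g\in B_\G(n)$ controls the second bad event by $V_\G(n)$ times $M\,Cn/\varphi(R_1)$, where $M$ bounds the number of times a $\La$-orbit meets $X_\G$ (this $M$ governs the change of variables $z\mapsto\bar z$ relating $\nu_\G$ and $\nu_\La$); choosing $R_1=\varphi^{-1}(\kappa n)$ for a suitable constant $\kappa$ makes it at most $\tfrac14 V_\G(n)\,\nu_\G(X_\G)$. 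The first bad event is handled by a judicious choice of fundamental domain, with $R_0$ constant. Combining, at least half of $\bigsqcup_{\g\in B_\G(n)}\g X_\G$ lies in $W_R$, whence $\tfrac12 V_\G(n)\,\nu_\G(X_\G)\le V_\La(R)\,\nu_\La(X_\La)$; since $\varphi$ is subadditive one has $\varphi^{-1}(\kappa n)\le\lceil\kappa\rceil\,\varphi^{-1}(n)$, so $R\preccurlyeq\varphi^{-1}(n)$ and the conclusion follows.

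The main obstacle is precisely the mismatch between the two fundamental domains: the quantity $V_\G(n)$ is most naturally read off the $X_\G$-tiling, whereas the integrability hypothesis, and hence every estimate on $\alpha$, lives on $X_\La$. The two difficulties this creates — the discrepancy term $\ell(z)$ and the multiplicity $M$ with which a $\La$-orbit meets $X_\G$ — are both controlled by choosing the $\G$-fundamental domain $X_\G$ to lie, up to an arbitrarily small proportion of its measure, inside a bounded neighbourhood $B_\La(R_0)\cdot X_\La$ of $X_\La$. That such a domain exists follows from continuity of measure, since the $\G$-invariant set of orbits avoiding $B_\La(R_0)\cdot X_\La$ shrinks to a null set as $R_0\to\infty$. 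Making this choice rigorous, and verifying that the small leftover does not spoil the measure comparison, is the delicate point of the argument; the naive attempt to build an \emph{injection} $B_\G(n)\hookrightarrow B_\La(R)$ from the cocycle fails, because $\g\mapsto\alpha(\g,x)$ need not be injective, which is exactly why the measure-theoretic tiling comparison is required.
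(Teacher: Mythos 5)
Your argument is correct in outline and can be completed, but it takes a genuinely different route from the one the paper uses (the paper only cites this discrete statement from Delabie--Koivisto--Le~Ma\^itre--Tessera; its own proof is of the locally compact analogue, Theorem~\ref{th:volumegrowth}, and the comparison below is with that proof). Both arguments share the first-moment estimate $\int_{X_\La}\varphi(|\alpha(\g,\cdot)|_{S_\La})\,d\nu_\La\le C|\g|_{S_\G}$ and a Markov step; they diverge at the counting stage. The paper works with the intersection $Y=X_\G\cap X_\La$ (arranged to have positive measure after translating one fundamental domain) and bounds from below the expected size of the return set $R_Y(x)\cap B_{\G}(n)$ via Lemma~\ref{lem:returntime}; the key point is that $\g\mapsto\alpha(\g,x)$ \emph{is} injective on $R_Y(x)$ --- if $\alpha(\g_1,x)=\alpha(\g_2,x)$ with $\g_1\cdot x,\,\g_2\cdot x\in Y\subset X_\G$, freeness forces $\g_1=\g_2$ --- so the return set injects into the ball of radius $\varphi^{-1}(Cn)$ in $\La$. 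Your closing remark that the injection approach ``fails'' is therefore not quite right: it fails on all of $B_\G(n)$ but succeeds on return times to the intersection of the two fundamental domains, and that is exactly the mechanism used here (and the one that is delicate to transport to non-discrete groups, where measure-preservation of $\alpha(\cdot,x)$ on return sets must be engineered, cf.\ Lemma~\ref{lem:HaarPreserving}). Your tiling comparison in $\Om$ is a valid substitute: the displacement $\ell(z)$ and the multiplicity $M$ are both controlled by restricting to $X_\G\cap B_\La(R_0)\cdot X_\La$, on which the map $z\mapsto\bar z$ has multiplicity at most $V_\La(R_0)$; note that no ``judicious choice'' of $X_\G$ is needed, since $\nu_\G\bigl(X_\G\setminus B_\La(R_0)\cdot X_\La\bigr)\to 0$ as $R_0\to\infty$ for \emph{any} fundamental domain, the leftover contributing at most $\varepsilon V_\G(n)$ to the bad set, and the additive constant $R_0$ is absorbed via $V_\La(R_0+R_1)\le V_\La(R_0)\,V_\La(R_1)$. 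One small slip: subadditivity of $\varphi$ yields $\varphi^{-1}(mn)\ge m\,\varphi^{-1}(n)$, not the inequality you state; this does not matter, since $V_\La(\varphi^{-1}(\kappa n))\preccurlyeq V_\La(\varphi^{-1}(n))$ is immediate from the definition of $\preccurlyeq$, which already allows a multiplicative constant inside the argument --- subadditivity is genuinely needed only for the first-moment bound. In short, your route trades the injectivity-on-return-sets observation for a multiplicity estimate; both work, and yours has the minor advantage of never needing the two fundamental domains to intersect.
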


For instance, given positive integers $k$ and $\ell$, there is no $(\ld^p,\ld^0)$ measure equivalence coupling from $\Z^{k+\ell}$ and $\Z^k$ if $p>\frac{k}{k+\ell}$.\par
Delabie, Llosa Isenrich and Tessera have recently brought a first extension to locally compact groups when $\varphi(t)=t^p$.

\begin{theorem}[{\cite[Theorem~A.5]{delabie$mathrmL^p$MeasureEquivalence2025}}]\label{th:growthdklmt}
    Let $G$ and $H$ be unimodular locally compact compactly generated groups and let $p\in ]0,1]$. If there exists an $(\ld^p,\ld^0)$-integrable measure equivalence coupling from $G$ to $H$, then
    $$V_G(n)\preccurlyeq V_H(n^{1/p}).$$
\end{theorem}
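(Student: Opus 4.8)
The plan is to transport the $G$-ball $B_G(n)$ through the coupling into an $H$-ball of radius $R\asymp n^{1/p}$ and compare Haar volumes, using only the $L^p$-controlled cocycle $\alpha\colon G\times X_H\to H$ (the $L^0$ side $\beta$ is never integrated). Throughout I would work with the two measure-space identifications $\Omega\cong G\times X_G$ and $\Omega\cong H\times X_H$ furnished by the fundamental domains, under which $\mu$ becomes $m_G\otimes\nu_G$ and $m_H\otimes\nu_H$ respectively ($m_G,m_H$ Haar, using unimodularity). In these coordinates $\mu(B_G(n)\cdot X_G)=V_G(n)\,\nu_G(X_G)$ and $\mu(B_H(R)\cdot X_H)=V_H(R)\,\nu_H(X_H)$, where $V_G(n)=m_G(B_G(n))$, $V_H(R)=m_H(B_H(R))$.

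The first step would be to establish linear $L^p$-growth of the cocycle. Writing $g$ of length $\le n$ as a product of $\le n$ elements of the compact generating set $S_G$ and iterating the cocycle identity $\alpha(g_1g_2,x)=\alpha(g_1,g_2\cdot x)\,\alpha(g_2,x)$, subadditivity of $|\cdot|_{S_H}$ together with subadditivity of $t\mapsto t^p$ (here $p\le1$ is essential) and the measure-preservation of the induced shifts on $(X_H,\nu_H)$ should yield $\int_{X_H}|\alpha(g,\cdot)|_{S_H}^p\,d\nu_H\le C\,|g|_{S_G}$, where $C=\sup_{s\in S_G}\int_{X_H}|\alpha(s,\cdot)|_{S_H}^p\,d\nu_H<\infty$ by $(\ld^p,\ld^0)$-integrability. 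Markov's inequality then gives, for every $g\in B_G(n)$, the uniform bound $\nu_H\bigl(|\alpha(g,\cdot)|_{S_H}>R/2\bigr)\le 2^pCn/R^p$.

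Next comes the transport estimate and the control of two ``escape'' sets. For $\omega=g\cdot x_G\in B_G(n)\cdot X_G$ I would write the $H$-coordinate of $x_G$ as $(\rho(x_G),\bar x)$ with $\bar x=\pi_H(x_G)\in X_H$; commutativity of the two actions gives $\rho_H(\omega)=\rho(x_G)\,\alpha(g,\bar x)^{-1}$, hence $|\rho_H(\omega)|_{S_H}\le D(x_G)+|\alpha(g,\bar x)|_{S_H}$ with $D(x_G):=|\rho(x_G)|_{S_H}$. Consequently $\omega\notin B_H(R)\cdot X_H$ forces either $D(x_G)>R/2$ or $|\alpha(g,\bar x)|_{S_H}>R/2$. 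The first possibility contributes at most $V_G(n)\,\nu_G(\{D>R/2\})$, which is small because $D<\infty$ almost everywhere on the finite measure space $(X_G,\nu_G)$ while $R\to\infty$. The crucial term is the second, $\int_{B_G(n)}\nu_G\bigl(\{x_G:|\alpha(g,\pi_H(x_G))|_{S_H}>R/2\}\bigr)\,dm_G(g)$: pushing $\nu_G$ forward by $\pi_H|_{X_G}$ produces the weight $w(x_H)=m_H(\{h\in H:h\cdot x_H\in X_G\})$, and a Fubini computation gives $\int_{X_H}w\,d\nu_H=\mu(X_G)=\nu_G(X_G)<\infty$, so $w\in L^1(\nu_H)$. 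Uniform absolute continuity of $A\mapsto\int_A w\,d\nu_H$, combined with the Step~1 bound (taking $R=Kn^{1/p}$ forces $\nu_H(|\alpha(g,\cdot)|_{S_H}>R/2)\le 2^pC/K^p$ uniformly in $g$ and $n$), makes this term $\le\varepsilon\,V_G(n)$ for $K$ large.

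Choosing $K$ and then $n$ large so that each escape set carries at most $\tfrac12\,V_G(n)\,\nu_G(X_G)$ of the mass, at least half of $B_G(n)\cdot X_G$ lies in $B_H(Kn^{1/p})\cdot X_H$, whence $\tfrac12\,V_G(n)\,\nu_G(X_G)\le V_H(Kn^{1/p})\,\nu_H(X_H)$ and $V_G(n)\preccurlyeq V_H(n^{1/p})$ after absorbing $K$ into the asymptotic relation. The main obstacle is exactly this second escape term: since $\beta$ is only $\ld^0$, the depth $D(x_G)$ is uncontrolled and the projection $\pi_H|_{X_G}$ is neither injective nor measure-preserving, so the naive discrete injectivity argument fails; the estimate survives only because the return-time multiplicity $w$ is globally $\nu_H$-integrable with total mass $\nu_G(X_G)$, making its integral over the small sets $\{|\alpha(g,\cdot)|_{S_H}>R/2\}$ uniformly small. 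The remaining care is the passage from counting to Haar integrals and the verification that the $(\ld^p,\ld^0)$ definition in the locally compact setting indeed yields $C<\infty$.
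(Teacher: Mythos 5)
Your overall architecture (establish $\int_{X_H}|\alpha(g,\cdot)|_H^p\,d\nu_H\le C|g|_G$ by the cocycle identity and subadditivity of $t^p$, then compare the $\mu$-masses of the thickened balls $B_G(1_G,n)\ast X_G$ and $B_H(1_H,R)\ast X_H$ with $R\asymp n^{1/p}$) is the classical Bowen-style argument and would work verbatim for discrete groups. But the step you yourself flag as the crux contains a genuine gap in the non-discrete setting. You assert that the pushforward of $\nu_G$ under $\pi_H\colon X_G\to X_H$ has density $w(x_H)=\la_H(\{h\in H: h\ast x_H\in X_G\})$ with respect to $\nu_H$, and that $\int_{X_H}w\,d\nu_H=\mu(X_G)=\nu_G(X_G)$. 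When $G$ is non-discrete, $X_G=i_G(\{1_G\}\times X_G)$ is a $\mu$-null subset of $\Omega$, so $\mu(X_G)=0\neq\nu_G(X_G)$ and hence $w=0$ for $\nu_H$-almost every $x_H$: the measure $\nu_G$ is transverse data, not the restriction of $\mu$ to $X_G$, and the "return-time multiplicity" heuristic collapses. Worse, even the qualitative statement $(\pi_H)_\ast\nu_G\ll\nu_H$ is not justified by anything in your argument; what one can extract from the measured isomorphism $i_H^{-1}\circ i_G$ is only that a Haar-\emph{average} of translates of $(\pi_H)_\ast\nu_G$ is absolutely continuous with respect to $\nu_H$, which is compatible with $(\pi_H)_\ast\nu_G$ itself being singular (concentrated on a cross-section of the induced $G$-action). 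Since your control of the second escape set rests entirely on uniform absolute continuity of $A\mapsto\int_A w\,d\nu_H$, the estimate does not go through as written. (There is also a harmless arithmetic slip: two escape sets each of mass at most $\tfrac12 V_G(n)\nu_G(X_G)$ leave nothing over; you need $\tfrac14$ each, say.)

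This is precisely the difficulty the paper isolates: in the locally compact non-discrete setting the cocycles and orbit projections have no reason to interact well with the Haar and transverse measures. The paper's remedy is Lemma~\ref{lem:HaarPreserving}, which uses cross-sections (following the appendix of Delabie--Llosa Isenrich--Tessera) together with an ergodic decomposition to replace the fundamental domains by ones for which $\restr{\nu_G}{X_G\cap X_H}$ is proportional to $\restr{\nu_H}{X_G\cap X_H}$ and, crucially, the maps $\alpha(\cdot,x)\colon R^G_{X_G\cap X_H}(x)\to H$ are Haar-measure-preserving onto their images. The actual volume-growth proof then avoids your projection $\pi_H$ altogether: it bounds $\int_{X_H}\la_G(R_Y^G(x)\cap G_x)\,d\nu_H(x)$ from below by a multiple of $V_G(n)$ via the return-time identity of Lemma~\ref{lem:returntime} and two applications of Markov's inequality, and from above by $V_H(\varphi^{-1}(CKK'n))$ by transporting $\la_G$ through $\alpha(\cdot,x)$ using the measure-preservation property. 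To repair your argument you would either need to import Lemma~\ref{lem:HaarPreserving} (at which point the paper's route is the natural one) or supply an independent proof that $(\pi_H)_\ast\nu_G$ admits an integrable density with respect to $\nu_H$, which is essentially the same cross-section analysis in disguise.
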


We extend this result to $(\varphi,\ld^0)$-measure equivalence, where $\varphi\colon\R_+\to\R_+$ is any increasing and subadditive map, for instance $\varphi(t)=\log{(1+t)}$.

\begin{theoremletter}[see Theorem~\ref{th:volumegrowth}]\label{th:volumegrowth intro}
    Let $G$ and $H$ be unimodular locally compact compactly generated groups and $\varphi\colon\R_+\to\R_+$ be an increasing and subadditive map. If there exists a $(\varphi,\ld^0)$-integrable measure equivalence coupling from $G$ to $H$, then
    $$V_G(n)\preccurlyeq V_H(\varphi^{-1}(n)).$$
\end{theoremletter}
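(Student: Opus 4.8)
The plan is to transport the counting argument behind Theorem~\ref{th:dklmtgrowth} to the locally compact setting, replacing cardinalities by Haar measures and carefully transferring mass between the two fundamental-domain descriptions of the coupling $(\Om,X_G,X_H,\mu)$. Let $\alpha\colon G\times X_H\to H$ be the associated cocycle, let $S_G,S_H$ be symmetric compact generating sets, write $|X_H|=\nu_H(X_H)\in(0,+\infty)$, and recall the locally compact conventions $V_G(n)=\mathrm{Haar}_G(B_G(n))$, $V_H(n)=\mathrm{Haar}_H(B_H(n))$. Recall also that $g\cdot x:=\alpha(g,x)gx\in X_H$ defines the induced $\nu_H$-preserving $G$-action on $X_H$, and that $\mu$ disintegrates as $\mathrm{Haar}_H\otimes\nu_H$ under the isomorphism $\phi_H\colon\Om\xrightarrow{\sim}H\times X_H$ attached to the $H$-fundamental domain.

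The first step is to upgrade $(\varphi,\ld^0)$-integrability, which bounds $\int_{X_H}\varphi(|\alpha(s,x)|_{S_H})\,d\nu_H(x)$ on the generating set $S_G$, into a length-linear estimate
\[
\int_{X_H}\varphi\big(|\alpha(g,x)|_{S_H}\big)\,d\nu_H(x)\ \le\ C\,(1+|g|_{S_G})\qquad(g\in G).
\]
Writing $g$ as a product of $\lceil|g|_{S_G}\rceil$ generators and iterating the cocycle identity $\alpha(g_1g_2,x)=\alpha(g_1,g_2\cdot x)\,\alpha(g_2,x)$, the triangle inequality in $(H,|\cdot|_{S_H})$ together with the monotonicity and subadditivity of $\varphi$ gives $\varphi(|\alpha(g,x)|_{S_H})\le\sum_i\varphi(|\alpha(s_i,(s_{i+1}\cdots)\cdot x)|_{S_H})$; since each inner argument ranges over $X_H$ in a measure-preserving way, integrating term by term yields the bound.

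Next, fixing $n$, I would pick (independently of $n$) a threshold $M$ with $\nu_H(X_H^M)\ge\tfrac34|X_H|$, where $X_H^M:=\{x\in X_H\mid m(x)\le M\}$ and $m(\omega):=\mathrm{Haar}_G\{g\in G\mid g\omega\in X_H\}$ is the ($G$-invariant, a.e.\ finite) multiplicity function; finiteness follows from $\int_{X_G}m\,d\nu_G=\mu(X_H)=|X_H|<\infty$. Integrating Step~1 over $B_G(n)$ gives $\int_{B_G(n)\times X_H}\varphi(|\alpha(g,x)|_{S_H})\,dg\,d\nu_H(x)\le C'n\,V_G(n)$, so Markov's inequality shows that, with $R:=\varphi^{-1}(4C'n/|X_H|)$, the good set $\mathcal A:=\{(g,x)\in B_G(n)\times X_H^M\mid |\alpha(g,x)|_{S_H}\le R\}$ satisfies $(\mathrm{Haar}_G\otimes\nu_H)(\mathcal A)\ge\tfrac12 V_G(n)|X_H|$. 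Now consider the action map $a\colon G\times X_H\to\Om$, $(g,x)\mapsto g\cdot x$, composed with $\phi_H$: since $|\alpha(g,x)^{-1}|_{S_H}=|\alpha(g,x)|_{S_H}\le R$ on $\mathcal A$, we get $a(\mathcal A)\subseteq B_H(R)\cdot X_H$, hence $\mu(a(\mathcal A))\le V_H(R)|X_H|$. Unimodularity of $G$ gives the change-of-variables identity $a_*(\mathrm{Haar}_G\otimes\nu_H)=m\,d\mu$, and the cap $x\in X_H^M$ forces $m\le M$ on $a(\mathcal A)$ (by $G$-invariance $m(g\cdot x)=m(x)$), whence $(\mathrm{Haar}_G\otimes\nu_H)(\mathcal A)\le M\,\mu(a(\mathcal A))$. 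Combining the two estimates, $\tfrac12 V_G(n)|X_H|\le M\,V_H(R)|X_H|$, that is $V_G(n)\le 2M\,V_H(\varphi^{-1}(4C'n/|X_H|))$; since $V_H\circ\varphi^{-1}$ is the common comparison function, the constants $2M$ and $4C'/|X_H|$ are absorbed by $\preccurlyeq$, giving $V_G(n)\preccurlyeq V_H(\varphi^{-1}(n))$.

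The step I expect to be the main obstacle is the measure transfer in Step~3: establishing $a_*(\mathrm{Haar}_G\otimes\nu_H)=m\,d\mu$ and controlling the multiplicity $m$ without any coboundedness assumption. Here the two fundamental domains $X_G$ and $X_H$ need not be comparable and $\mu$ may be infinite, so the pushforward must be computed honestly from unimodularity (right-invariance of $\mathrm{Haar}_G$ makes $m$ genuinely $G$-invariant and yields $\int_{X_G}m\,d\nu_G=|X_H|$), and the restriction to $X_H^M$ is precisely what converts the a.e.\ finiteness of $m$ into the uniform bound needed to pass from the source measure to $\mu(a(\mathcal A))$. Everything else — the cocycle manipulations of Step~1, the Markov estimate, and the final absorption of constants — should be routine once the precise definition of $(\varphi,\ld^0)$-integrability from Section~\ref{sec:Prel:ME} is in place.
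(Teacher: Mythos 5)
Your Steps 1 and 2 (the length-linear estimate $\int_{X_H}\varphi(|\alpha(g,x)|_H)\,\mathrm{d}\nu_H\leq C(1+|g|_G)$ via the cocycle identity and subadditivity, and the double Markov argument) match what the paper does. The gap is exactly where you predicted it, in the measure transfer of Step~3, and your proposed fix does not work. For non-discrete $H$ the fundamental domain $X_H$ is a \emph{$\mu$-null} subset of $\Omega$: under $i_H$ it corresponds to $\{1_H\}\times X_H$, so $\mu(X_H)=\lambda_H(\{1_H\})\,\nu_H(X_H)=0$, not $\nu_H(X_H)$. Consequently your multiplicity function $m(\omega)=\lambda_G\{g: g\ast\omega\in X_H\}$ satisfies $\int_{X_G}m\,\mathrm{d}\nu_G=\mu(X_H)=0$, i.e.\ $m=0$ a.e., and the claimed identity $a_\ast(\lambda_G\otimes\nu_H)=m\,\mathrm{d}\mu$ is false: the left-hand side is a nonzero measure (already in the trivial example $G=H=\R$ acting on $\Omega=\R$ by translation, $a_\ast(\lambda_G\otimes\nu_H)=\mu$ while $m\equiv 0$). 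The correct pushforward is $\lambda_G\otimes\pi_\ast\nu_H$ in the coordinates $\Omega\simeq G\times X_G$, where $\pi$ is the projection of $X_H$ onto $X_G$ along $G$-orbits; its density with respect to $\mu$ is the Radon--Nikodym derivative $\mathrm{d}\pi_\ast\nu_H/\mathrm{d}\nu_G$, which is unrelated to your $m$, need not be bounded, and cannot be produced by unimodularity alone. This is precisely the phenomenon the paper isolates as the main obstacle to porting the discrete argument: cocycles between non-discrete groups have no reason to intertwine Haar measures.

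The paper's substitute is Lemma~\ref{lem:HaarPreserving}: one first modifies the coupling (passing to ergodic components and replacing the fundamental domains via cross-sections, following \cite[Proposition~A.1]{delabie$mathrmL^p$MeasureEquivalence2025}) so that $X_G\cap X_H$ has positive compatible measure and the maps $\alpha(\cdot,x)\colon R^G_{X_G\cap X_H}(x)\to\alpha(R^G_{X_G\cap X_H}(x),x)$ become genuinely Haar-measure-preserving. The double counting is then run on the quantity $I=\int_{X_H}\lambda_G(R^G_Y(x)\cap G_x)\,\mathrm{d}\nu_H(x)$ with $Y=X_G\cap X_H$ and the \emph{induced} action (Lemma~\ref{lem:returntime} plays the role of your mass computation, legitimately, because the induced action preserves $\nu_H$), and the upper bound $I\leq V_H(\varphi^{-1}(CKK'n))$ is obtained by pushing $\lambda_G$ forward through $\alpha(\cdot,x)$ using property~\textit{(P3)} --- the step your $m\,\mathrm{d}\mu$ identity was meant to replace. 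Without some version of that preparatory lemma, the passage from the $(\lambda_G\otimes\nu_H)$-measure of your good set $\mathcal{A}$ to a Haar-measure bound in $H$ cannot be justified, so the proof as written does not go through.
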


\begin{remark}
We focus on unimodular groups because $\varphi$-integrability is undefined in the non-unimodular setting. Indeed, a definition of measure equivalence exists in the non-unimodular case (see e.g.~\cite[Definition~3.2]{koivistoMeasureEquivalenceNonunimodular2021}), but asks for the more general notion of non-singular actions. Another obstruction for an extension to non-unimodular groups will be the fact that isoperimetric profile, that we now deal with, does not contain any interesting information in the case of non-unimodular locally compact groups (we refer the reader to~\cite[comments before Section~5.2]{Tes13}).
\end{remark}

\paragraph{Behaviour of isoperimetric profiles.} A limitation of the previous rigidity results is that volume growth cannot distinguish between groups that get bigger and bigger. For instance, in the case of discrete groups, the following iterations of lamplighters
$$\Z/2\Z\wr(\Z/2\Z\wr\ldots (\Z/2\Z\wr\Z)\ldots)$$
all have exponential growth, but it is natural to expect that such groups are integrably measure equivalent only if they have the same number of iterations. The isoperimetric profiles enable us to distinguish them.\par
For a finitely generated group $\G$ with a finite generating set $S_{\G}$, and given $p\geq 1$, its $\ell^p$-\textit{isoperimetric profile} is the function $j_{p,\G}\colon \N\longrightarrow\R_{+}$ given by 
\begin{equation*}
    j_{p,\G}(n) \coloneq\sup_{\substack{f\colon \G\to\R_+\\|\supp{f}|\leq n}}{\frac{\|f\|_p}{\|\nabla_{\G} f\|_p}}
\end{equation*}
where the support of $f\colon \G\longrightarrow\R_+$ is $\supp{f}\coloneq\lbrace \g\in \G : f(\g)\neq 0\rbrace$ and the $\ell^p$-norm of its gradient is defined by 
\begin{equation*}
   \|\nabla_{\G} f\|_{p}^{p}\coloneq\sum_{\g\in \G,\; s\in S_{\G}}\left|f(\g)-f(s^{-1}\g)\right|^p. 
\end{equation*}
In the case $p=1$, this function is simply called \textit{isoperimetric profile}, and has a simpler definition (up to asymptotic equivalence), namely
\begin{equation*}
    j_{1,\G}(n) \simeq\sup_{|A|\le n}\frac{|A|}{|\partial_{\G} A|},
\end{equation*}
where $\partial_{\G}A \coloneq AS_{\G}\setminus A=\lbrace \g\in \G\setminus A : \exists s\in S_{\G}, \exists a\in A, \g=as\rbrace$ is the \textit{boundary} of $A$ in $\G$. It is well known that $j_{1,\G}$ is the generalized inverse of the \textit{F\o lner function}\par
\begin{equation*}
   \text{F\o  l}_\Gamma(k) =\inf\left\{|A|\colon  \frac{|\partial A|}{|A|}\leq 
1/k\right\}
\end{equation*}
first mentioned by Vershik~\cite{Vershik}. Finally a well-known inequality due to Coulhon and Saloff-Coste~\cite{coulhonIsoperimetriePourGroupes1993} relates $j_{1,\G}$ and $V_{\G}$:
$$j_{1,\G}(n)\preccurlyeq V_{\G}^{-1}(n),$$
where $V_\G^{-1}$ denotes the generalized inverse of the volume growth function. Here are examples of isoperimetric profiles for some groups:
\begin{itemize}
    \item $j_{p,\G}(n) \simeq n^{\frac{1}{d}}$ if $\G$ has polynomial growth of degree $d\ge 1$;
    \item $j_{p,\G}(n)\simeq \ln(n)$ for the Baumslag-Solitar group $\G=\text{BS}(1,k)$ for $k\ge 2$, or $\G=F\wr\Z$, where $F$ is a non-trivial finite group;
    \item $j_{p,\G}(n) \simeq \ln(n)$ for any polycyclic group $\G$ with exponential growth~\cite{Pit95, Pit00};
    \item $j_{1,F\wr \G}(n)\simeq (\ln(n))^{\frac{1}{d}}$ with $F$ finite, and $\G$ having polynomial growth of degree $d\ge 1$~\cite{erschlerIsoperimetricProfilesFinitely2003};
    \item Brieussel and Zheng~\cite[Theorem~1.1]{BZ21} give a large description of asymptotic behaviours for isoperimetric profiles. For any non-decreasing function $f\colon \R_{+}\to\R_{+}$ such that $t\mapsto \frac{t}{f(t)}$ is non-decreasing, there exists a finitely generated group $\G$ with exponential volume growth having isoperimetric profile $j_{p,\G}(n)\simeq \frac{\ln(n)}{f(\ln(n))}$.
\end{itemize}

Notice that the $\ell^p$-isoperimetric profile of a finitely generated group is bounded if and only if the group is non-amenable. Moreover its asymptotic behaviour is, somehow, a measurement of its amenability; the faster it goes to infinity, the "more amenable" the group is. The following result clearly demonstrates that quantitative measure equivalence accurately captures the geometry of amenable groups.

\begin{theorem}[{\cite[Theorem~1.1]{delabieQuantitativeMeasureEquivalence2022}}]\label{th:dklmtprofile}
Let $\varphi\colon\R_{+}\longrightarrow\R_{+}$ be a non-decreasing function. Let $\Gamma$ and $\Lambda$ be finitely generated groups. Assume that there exists a $(\varphi,\ld^0)$-integrable measure equivalence coupling from $\G$ to $\La$. Then
\begin{itemize}
    \item if $\varphi(t)=t^p$ with $p\geq 1$, then $j_{p,\La}(n) \preccurlyeq j_{p,\G}(n)$;
    \item if $t\mapsto \frac{t}{\varphi(t)}$ is non-decreasing, then $\varphi\circ j_{1,\La}(n) \preccurlyeq j_{1,\G}(n)$.
\end{itemize}
\end{theorem}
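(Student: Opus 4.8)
\section*{Proof proposal}

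The plan is to transfer a near-optimal test function for the profile of $\La$ across the coupling and read it on $\G$, using that the controlled cocycle is the one associated to the $\G$-action valued in $\La$. Fix a $(\varphi,\ld^0)$-integrable coupling $(\Om,X_{\G},X_{\La},\mu)$ from $\G$ to $\La$, so that for each $s\in S_{\G}$ the map $y\mapsto|\alpha(s,y)|_{S_{\La}}$ is $\varphi$-integrable on $(X_{\La},\nu_{\La})$. Given $f\colon\La\to\R_+$ with $|\supp f|\le n$, I would \emph{induce} it to $\Om$ by writing each $\omega\in\Om$ uniquely as $\omega=\la\cdot y$ with $y\in X_{\La}$, $\la\in\La$, and setting $\tilde f(\omega):=f(\la)$; then, using the measurable identification $X_{\G}\times\G\cong\Om$ coming from the $\G$-fundamental domain, I define for a.e. $x_{\G}\in X_{\G}$ the fibre function $f_{x_{\G}}\colon\G\to\R_+$, $f_{x_{\G}}(\g):=\tilde f(\g x_{\G})$. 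The point of this construction is that a $\G$-edge becomes a $\La$-jump governed by $\alpha$: a direct computation using that $\G$ and $\La$ commute gives
\begin{equation*}
f_{x_{\G}}(\g)-f_{x_{\G}}(s^{-1}\g)=f(\la(\omega))-f\bigl(\la(\omega)\,\alpha(s^{-1},y)^{-1}\bigr),\qquad \omega=\g x_{\G},\ y=\la(\omega)^{-1}\omega,
\end{equation*}
so this difference compares two points of $\La$ at word distance $|\alpha(s^{-1},y)|_{S_{\La}}$.

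The next step is three fibre estimates. By measure-preservation, $\int_{X_{\G}}\|f_{x_{\G}}\|_p^p\,d\nu_{\G}=\int_{\Om}|\tilde f|^p\,d\mu=\mu(X_{\La})\,\|f\|_p^p$, an exact identity, and likewise $\int_{X_{\G}}|\supp f_{x_{\G}}|\,d\nu_{\G}=\mu(X_{\La})\,|\supp f|$, so by Markov's inequality $|\supp f_{x_{\G}}|\le Kn$ off a set of $x_{\G}$ of arbitrarily small relative measure. The third and crucial estimate controls $\int_{X_{\G}}\|\nabla_{\G}f_{x_{\G}}\|_p^p\,d\nu_{\G}$. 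Here I would telescope the displayed difference along a (measurably chosen) $\La$-geodesic of length $L=|\alpha(s^{-1},y)|_{S_{\La}}$, bound it by the sum of the $|\nabla_{\La}f|$ along that geodesic, and then integrate over $\omega$, turning the geodesic weights into the $\nu_{\La}$-integral of the cocycle length.

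The two cases diverge precisely in how one pays for this telescoping. For $\varphi(t)=t^p$ with $p\ge1$, the power-mean inequality $\bigl(\sum_{i=1}^{L}a_i\bigr)^p\le L^{p-1}\sum_{i=1}^{L}a_i^p$ lets the $p$-th power pass through the geodesic, producing a factor $|\alpha(s^{-1},y)|_{S_{\La}}^{\,p-1}$; combined with the edge-multiplicity (each $\La$-edge lies on $O(L)$ of the selected geodesics) this charges a total weight of order $|\alpha|^p$, which is finite exactly by $\ld^p$-integrability, yielding $\int_{X_{\G}}\|\nabla_{\G}f_{x_{\G}}\|_p^p\,d\nu_{\G}\le C\|\nabla_{\La}f\|_p^p$. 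For $\varphi$ with $t\mapsto t/\varphi(t)$ non-decreasing one works at $p=1$, where there is no power-mean loss, but now only $\int\varphi(|\alpha|)<\infty$ is available rather than $\int|\alpha|<\infty$; I would exploit the concavity of $\varphi$ (equivalently $\varphi(t)/t$ non-increasing, which also forces subadditivity) through Jensen's inequality to charge only $\varphi(|\alpha|)$ per geodesic, at the cost of replacing the isoperimetric ratio $j_{1,\La}(n)\approx\|f\|_1/\|\nabla_{\La}f\|_1$ by $\varphi\circ j_{1,\La}(n)$ on the left-hand side — this is the exact point at which the $\varphi$ of the conclusion is generated.

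To conclude, I would restrict to the good set $\mathcal G\subset X_{\G}$ on which $|\supp f_{x_{\G}}|\le Kn$, apply the definition of $j_{p,\G}$ fibrewise, $\|f_{x_{\G}}\|_p\le j_{p,\G}(Kn)\,\|\nabla_{\G}f_{x_{\G}}\|_p$, integrate over $\mathcal G$ using monotonicity of $j_{p,\G}$, and combine with the norm identity and the gradient bound to obtain $\|f\|_p\le C\,j_{p,\G}(Kn)\,\|\nabla_{\La}f\|_p$ (respectively $\varphi(\|f\|_1/\|\nabla_{\La}f\|_1)\le C\,j_{1,\G}(Kn)$), whence the claimed comparison of profiles after taking the supremum over $f$. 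I expect the main obstacle to be the gradient transfer: choosing the $\La$-geodesics measurably in $y$ and controlling how many of them pass through a fixed $\La$-edge, so that the accumulated weight matches $\varphi(|\alpha|)$ and no more. A secondary technical nuisance is that the $\ld^p$-mass $\int_{\mathcal G^c}\|f_{x_{\G}}\|_p^p$ must be shown negligible (not merely the measure of $\mathcal G^c$), which one handles by reducing to indicator functions via the layer-cake decomposition or by a simultaneous truncation of norm, support and gradient on the bad fibres.
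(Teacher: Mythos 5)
Your proposal is correct and is essentially the approach the paper itself takes: the statement is quoted from Delabie--Koivisto--Le Ma\^itre--Tessera, and the paper's own rendition of the argument (the proofs of Theorems~\ref{th:Lp} and~\ref{th:phi}, which specialize to this discrete case) follows exactly your scheme --- induce $f$ to $\Omega$, fibre over $X_\G$ to get the random functions $f_{x}$, establish the three mean estimates (support, $\ld^p$-norm, gradient), and pay for the gradient transfer with $|\alpha|^p$ in the first case and with $\varphi(|\alpha|\cdot\|\gsup_H f\|_1)$ times the factor $2\|f\|_1/\varphi(2\|f\|_1)$ coming from the monotonicity of $t\mapsto t/\varphi(t)$ in the second. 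The only real divergences are organizational: the telescoping is done at the level of $\ld^p$-norms using left-invariance (Lemma~\ref{lem:3}), so no measurable geodesic selection or edge-multiplicity count is needed; and in the endgame, instead of integrating the fibrewise profile inequality over the good set and then having to show $\int\|f_{x}\|_p^p$ is negligible on its complement (the nuisance you flag), the paper pigeonholes on the ratio --- the set of $x$ where $\|\gint_G f_x\|_p^p\leq \frac{C_p+1}{K'}\frac{\|\gsup_H f\|_p^p}{\|f\|_p^p}\|f_x\|_p^p$ has positive measure by an averaging contradiction, and intersecting it with the Markov set for the support produces a single good $x_0$, which sidesteps the bad-set mass issue entirely.
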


Delabie, Koivisto, Le Maître and Tessera~\cite{delabieQuantitativeMeasureEquivalence2022} also proved rigidity results for asymmetric notions of measure equivalence (measure subgroup, measure quotient, measure sub-quotient). In their works, they often use a crucial assumption on couplings, called being \textit{at most $m$-to-one}. To keep it simply, we define this notion in the context of measure equivalence: a coupling from $\G$ to $\La$ is at most $m$-to-one if the fibers of $\alpha(.,x)\colon\G\to\La$ have cardinality at most $m$ for almost every $x\in X_{\La}$.\par
In the same vein as Shalom's Theorem~\ref{th:shalom}, they proved the equivalence between the existence of an at most $m$-to-one $\ld^{\infty}$ measure subgroup from $\G$ to $\La$, and the existence of a regular embedding $\G\to\La$. By a \textit{regular embedding}, we mean a Lipschitz map $f\colon\G\to\La$ such that $\sup_{\la\in\La}{|f^{-1}(\{\la\})|}$ is finite. As a stunning consequence of the quantitative results of~\cite{delabieQuantitativeMeasureEquivalence2022}, the isoperimetric profiles are monotonous under regular embeddings.\par
It is worth noticing that the \enquote{at most $m$-to-one} assumption is essential in their adaptation of Shalom's theorem, since the bijections $\alpha(.,x)$, $x\in X_{\La}$, provided by the cocycle of an at most $m$-to-one $\ld^{\infty}$ measure subgroup are almost surely regular embedding, thus proving the existence of such maps from $\G$ to $\La$. However, for finer quantitative versions, the assumption seems avoidable. Although they could not get rid of it for rigidity results about quantitative measure sub-quotient, they manage to state Theorem~\ref{th:dklmtprofile} about quantitative measure equivalence without this assumption. We explain at the end of this introduction the basic ideas, and compare them with ours.\par
An extension of such quantitative results for asymmetric versions of measure equivalence was first brought by the second-named author~\cite{paucarIsoperimetricProfilesRegular2024}, where the assumption \enquote{at most $m$-to-one coupling} became \enquote{coarsely $m$-to-one coupling}. However the author could not avoid it to generalize Theorem~\ref{th:dklmtprofile} in the locally compact setting. We are now able to state such a generalization without this assumption.

\begin{theoremletter}[see Theorem~\ref{th:Lp}]\label{th:Lp intro}
    Let $G$ and $H$ be unimodular locally compact compactly generated groups and $p\geq 1$. If there exists an $(\ld^p,\ld^0)$ measure equivalence coupling from $G$ to $H$, then
    $$j_{p,H}(n)\preccurlyeq j_{p,G}(n).$$
\end{theoremletter}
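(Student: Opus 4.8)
The plan is to transport an arbitrary test function on $H$ back to a test function on $G$ using the cocycle associated to the $(\ld^p,\ld^0)$-coupling, and to show that this operation decreases the ratio $\|f\|_p/\|\nabla f\|_p$ that defines $j_{p,\cdot}$. Concretely, given a finitely supported (or, in the locally compact setting, compactly supported) function $\bar f\colon H\to\R_+$ with $\|\bar f\|_p/\|\nabla_H\bar f\|_p$ close to $j_{p,H}(n)$, I would define a function $f$ on $G$ by an averaging/pullback formula along the fibres of the coupling, roughly $f(g)=\int_{X_G}\bar f(\text{something}(g,x))\,d\nu_G(x)$, where the ``something'' uses the cocycle $\beta$ (or $\alpha$) to turn a group element of $G$ acting on $\Omega$ into a displacement in $H$ recorded relative to the fundamental domain $X_H$. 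The central identity, established in the discrete case in~\cite{delabieQuantitativeMeasureEquivalence2022}, is a cocycle relation saying that moving by a generator $s\in S_G$ corresponds, fibrewise, to moving by $\beta(s,\cdot)\in H$, so that a discrete derivative $f(g)-f(s^{-1}g)$ on the $G$-side is controlled by the $H$-gradient of $\bar f$ evaluated along a path whose length is exactly $|\beta(s,x)|_{S_H}$.

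The key steps, in order, would be: first, set up the locally compact measure equivalence formalism from Section~\ref{sec:Prel:ME} and write down precisely the pullback operator $\bar f\mapsto f$ together with the defining cocycle identities, being careful that unimodularity makes the relevant measures ($\nu_G$, $\nu_H$, and Haar on $\Omega$) behave symmetrically so that the change-of-variables steps introduce no modular distortion. Second, bound $\|f\|_p$ from below (or above, in the direction needed) in terms of $\|\bar f\|_p$, typically using Jensen's inequality for the averaging together with the measure-preserving nature of the actions and the finiteness of $\nu_G(X_G)$, so that the $\ld^p$-norm essentially transfers up to multiplicative constants. Third, bound $\|\nabla_G f\|_p$ from above in terms of $\|\nabla_H\bar f\|_p$: this is where the $\ld^p$-integrability of the cocycle $\beta(s,\cdot)$ enters, since a generator step on the $G$-side is realized by a path of length $|\beta(s,x)|_{S_H}$ on the $H$-side, and summing the telescoped gradient along that path and then integrating in $x$ produces a factor controlled by $\||\beta(s,\cdot)|_{S_H}\|_p$, which is finite precisely by the $\ld^p$ hypothesis. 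Finally, I would combine these two estimates to get $\|\bar f\|_p/\|\nabla_H\bar f\|_p\preccurlyeq \|f\|_p/\|\nabla_G f\|_p\le j_{p,G}(|\supp f|)$, and control the support size $|\supp f|$ (its Haar measure) by a constant multiple of $n$, so that taking the supremum over $\bar f$ yields $j_{p,H}(n)\preccurlyeq j_{p,G}(n)$.

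I expect the main obstacle to be the third step, the gradient estimate, for two related reasons. First, the telescoping-along-a-path argument that converts a single discrete $G$-derivative into a sum of $H$-derivatives of $\bar f$ must be adapted from the discrete combinatorial setting to the locally compact one, where ``generators'' are replaced by a compact generating set and the word-length $|\beta(s,\cdot)|_{S_H}$ becomes a genuine integral quantity; one has to choose geodesic (or quasi-geodesic) paths measurably in the fibre variable $x$ and verify that the resulting measurable selection is legitimate. Second, applying the triangle/Minkowski inequality in $\ld^p$ to a sum of $|\beta(s,x)|_{S_H}$ terms whose number itself varies with $x$ requires a careful Hölder or Minkowski-integral argument to land on the product $\||\beta(s,\cdot)|_{S_H}\|_p\cdot\|\nabla_H\bar f\|_p$ rather than a worse bound; this is exactly the place where the paper advertises that it ``simplifies the proofs'' of~\cite{delabieQuantitativeMeasureEquivalence2022}, so I anticipate the cleanest route is to integrate the $p$-th power of the path-sum directly and invoke the cocycle's $\ld^p$-integrability once, rather than estimating each generator separately.
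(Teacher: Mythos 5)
Your overall architecture --- pull a near-optimal test function $\bar f$ on $H$ back through the coupling to functions on $G$, compare supports, $\ld^p$-norms and gradients, with the $\ld^p$-integrability of the cocycle entering only in the gradient bound --- matches the paper's. But there are two genuine gaps. The first is the averaging: defining a single function $f(g)=\int_{X_G}\bar f(\cdots)\,d\nu_G(x)$ destroys the support control, since $\supp f$ can be as large as $\bigcup_x \supp f_x$, and Jensen's inequality goes the wrong way for the required lower bound on $\|f\|_p$ when $p>1$. The paper instead keeps the whole random family $f_x(g)=f(\alpha(g,h_x\ast x)h_x)$, proves three estimates \emph{in the mean} ($\int_{X_G}\la_G(\supp f_x)\,d\nu_G\le K\la_H(\supp f)$, $\int_{X_G}\|f_x\|_p^p\,d\nu_G\ge K'\|f\|_p^p$, and $\int_{X_G}\|\gint_G f_x\|_p^p\,d\nu_G\le C_p\|\gsup_H f\|_p^p$), and then uses Markov's inequality plus a pigeonhole argument to select a single fibre $x_0$ on which all three hold simultaneously. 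You need the selection, not the average.

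The second and more serious gap is that you do not address the one point where the locally compact case genuinely differs from the discrete one, which the paper isolates as its main technical input (Lemma~\ref{lem:HaarPreserving}): for non-discrete groups the maps $\alpha(\cdot,x)$ from return sets in $G$ to subsets of $H$ have no reason to preserve Haar measure, whereas in the discrete case any bijection preserves counting measure. This is precisely what the lower bound on $\|f_x\|_p^p$ (Proposition~\ref{prop:2}) requires, when one changes variables from an integral over $R_{X_G\cap X_H}^G(h_x\ast x)\subset G$ to an integral over $R_{X_G\cap X_H}^H(x)\subset H$. The paper must first replace the given coupling by one --- built via cross-sections following the appendix of~\cite{delabie$mathrmL^p$MeasureEquivalence2025}, together with an ergodic decomposition step --- whose cocycles are Haar-measure-preserving on these return sets, and must check that this modification preserves $\ld^p$-integrability. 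Your appeal to ``the measure-preserving nature of the actions'' does not supply this. Conversely, the step you flag as the main obstacle, the gradient estimate, is actually the easy part: no measurable selection of geodesics is needed, because the pointwise inequality $\|\lambda(h^{-1})f-f\|_p\le |h|_H\,\|\gsup_H f\|_p$ holds for each fixed $h$ by telescoping along any word of length $|h|_H$, after which one simply integrates $|\alpha(s^{-1},x)|_H^p$ over $(s,x)\in S_G\times X_H$, finite by the $\ld^p$ hypothesis. (Note also that the $\ld^p$-integrable cocycle is $\alpha\colon G\times X_H\to H$, not $\beta$, which is only assumed $\ld^0$; the expression $\beta(s,\cdot)$ for $s\in S_G$ is not even defined.)
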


\begin{theoremletter}[see Theorem~\ref{th:phi}]\label{th:phi intro}
    Let $G$ and $H$ be unimodular locally compact compactly generated groups and $\varphi\colon\R_+\to\R_+$ be a non-decreasing map such that $t\mapsto t/\varphi(t)$ is non-decreasing. If there exists a $(\varphi,\ld^0)$-integrable measure equivalence coupling from $G$ to $H$, then
    $$\varphi\circ j_{1,H}(n)\preccurlyeq j_{1,G}(n).$$
\end{theoremletter}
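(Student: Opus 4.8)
The plan is to argue by transport of test functions: from a function on $H$ nearly realizing $j_{1,H}(n)$ I will manufacture a function on $G$ whose isoperimetric ratio is of order $\varphi(j_{1,H}(n))$ while its support has measure $\preccurlyeq n$. Fix the coupling $(\Omega,X_G,X_H,\mu)$ and exploit that, by unimodularity, both actions are measure preserving, giving measure isomorphisms $\Omega\cong H\times X_H$ and $\Omega\cong G\times X_G$; write $\pi_H(\omega)\in H$ for the $H$-coordinate and $\rho_H(\omega)\in X_H$ for the transversal part. Acting by a $G$-element $s^{-1}$ multiplies $\pi_H$ on the right by the cocycle value $\alpha(s^{-1},\rho_H(\cdot))$ (in the appropriate inversion convention), whose word length is exactly what $(\varphi,\ld^0)$-integrability controls. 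Given $f\colon H\to\R_+$ with $\nu_H(\supp f)\le n$ and $N\coloneq\|f\|_1/\|\nabla_H f\|_1$ close to $j_{1,H}(n)$, I lift it to $\hat f\coloneq f\circ\pi_H$ on $\Omega$ and set $\tilde f(g)=\int_{X_G}\hat f(g\cdot y)\,d\nu_G(y)$. Unfolding the two parametrizations of $\Omega$ gives at once the exact identity $\|\tilde f\|_1=\nu_H(X_H)\,\|f\|_1$.

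The heart of the argument is the gradient estimate. For a generator $s$ of $G$ I push $\int_G|\tilde f(g)-\tilde f(s^{-1}g)|\,dg$ up to $\Omega$ (the map $(g,y)\mapsto g\cdot y$ being measure preserving) and back down to $H\times X_H$, bounding it by $\int_H\int_{X_H}|f(h)-f(h\cdot a)|\,d\nu_H(x)\,dh$ with $a=a(s,x)$ the cocycle value and $\ell=|a|_{S_H}$. Telescoping along a length-$\ell$ path gives $\int_H|f(h)-f(ha)|\,dh\le \ell\,\|\nabla_H f\|_1$, while right-invariance of Haar (again unimodularity) gives the trivial bound $2\|f\|_1=2N\|\nabla_H f\|_1$; hence this quantity is at most $\min(\ell,2N)\,\|\nabla_H f\|_1$. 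The key elementary inequality is $\min(\ell,2N)\le\frac{2N}{\varphi(2N)}\varphi(\ell)$, valid because $\varphi$ is non-decreasing (case $\ell>2N$) and $t\mapsto t/\varphi(t)$ is non-decreasing (case $\ell\le 2N$) — this is precisely where both hypotheses on $\varphi$ are consumed. Integrating $\varphi(|\alpha(s^{-1},x)|_{S_H})$ over $x\in X_H$ and over the compact generating set of $G$ is finite by $(\varphi,\ld^0)$-integrability, so writing $C_\alpha$ for that finite integral we get $\|\nabla_G\tilde f\|_1\le\frac{2C_\alpha}{\varphi(2N)}\|f\|_1$, whence $\|\tilde f\|_1/\|\nabla_G\tilde f\|_1\ge\frac{\nu_H(X_H)}{2C_\alpha}\varphi(2N)\succcurlyeq\varphi(j_{1,H}(n))$. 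One genuine locally compact subtlety appears here: $\|\nabla_H f\|_1$ is an integral over the compact set $S_H$, so a single directional difference is not literally bounded by it; I repair the telescoping by using that $s\mapsto\int_H|f(h)-f(hs)|\,dh$ is subadditive and translation-averaged, fattening the discrete path into a tube inside $S_H$ at the cost of multiplicative constants.

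The main obstacle is to turn $\tilde f$ into a genuine competitor for $j_{1,G}$ \emph{at scale $n$}. Because the reverse cocycle is only assumed $\ld^0$, the set $\supp\tilde f=\pi_G(\supp f\cdot X_H)$ may be spread over a subset of $G$ of measure far exceeding $n$, so $\tilde f$ alone certifies $j_{1,G}$ only at an uncontrolled scale. I plan to extract a good level set via coarea: with $\|\tilde f\|_1=\int_0^\infty\lambda_G\{\tilde f>t\}\,dt$ and $\|\nabla_G\tilde f\|_1=\int_0^\infty\lambda_G(\partial_G\{\tilde f>t\})\,dt$ (up to constants), I choose a threshold $t_0$ in a window $[c\,\nu_G(X_G),\nu_G(X_G)]$, where Markov forces $\lambda_G\{\tilde f>t_0\}\preccurlyeq n$ and averaging yields small relative boundary. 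The delicate point is a matching lower bound on the mass at these high levels — equivalently, that an order-one proportion of the $G$-translates of $X_G$ lie almost inside $\supp f\cdot X_H$ — which I would derive from the Følner property of a near-optimal $f$ together with an exhaustion of $X_G$ on which the $\ld^0$ cocycle is bounded, absorbing an arbitrarily small measure error. Granting this, the extracted level set is a test set of measure $\preccurlyeq n$ and ratio $\succcurlyeq\varphi(j_{1,H}(n))$, giving $\varphi\circ j_{1,H}(n)\preccurlyeq j_{1,G}(n)$; throughout, unimodularity is what legitimizes every Haar change of variables on $\Omega$, $H$ and $G$.
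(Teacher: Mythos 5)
Your transport mechanism (unfolding $\Omega$ in both directions, the telescoping bound on $\|\lambda(a^{-1})f-f\|_1$, and the inequality $\min(\ell,2N)\le\frac{2N}{\varphi(2N)}\varphi(\ell)$, which is exactly where the two monotonicity hypotheses on $\varphi$ enter) matches the paper's Lemmas~\ref{lem:1}--\ref{lem:3} and Proposition~\ref{prop:4}. But there is a genuine gap, and you have located it yourself without closing it: by \emph{averaging} over $X_G$ to form a single function $\tilde f(g)=\int_{X_G}\hat f(g\ast y)\,d\nu_G(y)$, you get the right mass and the right gradient bound, but $\supp\tilde f$ is the union of the supports of all the fibrewise functions and is completely uncontrolled, since the $\beta$-cocycle is only $\ld^0$. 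Your proposed repair by coarea fails in general: nothing prevents $\tilde f$ from being spread at a uniformly small height $\epsilon$ over a set of measure $\nu_H(X_H)\|f\|_1/\epsilon$, in which case \emph{every} nonzero superlevel set has enormous measure, and the "matching lower bound on the mass at high levels" that you defer to a Følner-plus-exhaustion argument is precisely the missing ingredient; it does not follow from near-optimality of $f$. The paper avoids this entirely by \emph{not} averaging: it keeps the random family $f_x$, proves three expectation bounds --- $\int_{X_G}\la_G(\supp f_x)\,d\nu_G\le K\la_H(\supp f)$ (Proposition~\ref{prop:1}), $\int_{X_G}\|f_x\|_1\,d\nu_G\ge K'\|f\|_1$ (Proposition~\ref{prop:2}), and $\int_{X_G}\|\gint_G f_x\|_1\,d\nu_G\le C_\varphi\frac{2\|f\|_1}{\varphi(2\|f\|_1)}$ (Proposition~\ref{prop:4}) --- and then combines a positive-measure "good ratio" set with Markov's inequality on the support to select a single $x_0$ satisfying all three conditions simultaneously. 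You should replace the averaging step by this selection argument.

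Two smaller points. First, your "tube-fattening" fix for the fact that a single directional difference is not bounded by the integrated gradient is the right idea but is left as a sketch; the paper handles it cleanly by introducing the two gradients $\gsup$ and $\gint$ and proving (via the equivalence of norms on $Z^1(G,\lambda)$) that the associated isoperimetric profiles coincide, so that the telescoping can be run against $\|\gsup_H f\|_1$ and the conclusion still read off for $j_{1,H}$. Second, the paper first normalizes the coupling via Lemma~\ref{lem:HaarPreserving} (ergodicity plus cross-sections) so that the cocycle maps $\alpha(\cdot,x)$ restricted to return times to $X_G\cap X_H$ are Haar-measure-preserving; your proposal never addresses this, and while your mass identity only needs Fubini, any selection argument in the locally compact setting will need some version of that measure-preservation to compare $\la_G$- and $\la_H$-integrals along orbits.
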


\paragraph{Integrability threshold.} \sloppy Given positive integers $k,\ell\geq 1$ and polynomial growth groups $\Gamma$ and $\Lambda$ of degrees $k+\ell$ and $k$ respectively, the obstructions provided in~\cite{delabieQuantitativeMeasureEquivalence2022} imply the following: there is no $(\mathrm{L}^p,\mathrm{L}^0)$ integrable measure equivalence coupling from $\Gamma$ to $\Lambda$ for any $p>\frac{k}{k+\ell}$. The existence of an $(\mathrm{L}^p,\mathrm{L}^0)$ integrable measure equivalence coupling for any $p<\frac{k}{k+\ell}$ have recently been proved in~\cite[Theorem~1.6]{delabie$mathrmL^p$MeasureEquivalence2025}. For $\Gamma=\Z^{k+\ell}$ and $\Lambda=\Z^{k}$, the result was already known~\cite[Theorem~1.9]{delabieQuantitativeMeasureEquivalence2022}.\par
Now the question is the existence of such a coupling for the threshold $p=\frac{k}{k+\ell}$. For more general groups, not necessarily of polynomial growth, we would like to know if the bounds of integrability given by Theorems~\ref{th:dklmtgrowth} and~\ref{th:dklmtprofile} can be reached. The first-named author has recently answered this question~\cite[Theorems~A,B~and~D]{correiaAbsenceQuantitativelyCritical2025} in the discrete setting and the following statements provide extensions to locally compact groups.

\begin{theoremletter}[see Theorem~\ref{th:ThresholdProfile}]\label{th:ThresholdProfile intro}
    Let $G$ and $H$ be unimodular locally compact compactly generated groups. Assume that there exist a non-decreasing function $f_G$ and an increasing function $f_H$ satisfying $f_G(n)\simeq j_{1,G}(n)$, $f_H(n)\simeq j_{1,H}(n)$ and the following assumptions as $t\to +\infty$:
		\begin{equation}\label{hyp1 prof intro}
			f_G(t)=o\left (f_H(t)\right ),
		\end{equation}
		\begin{equation}\label{hyp2 prof intro}
			\forall C>0,\ f_G(Ct)=O\left (f_G(t)\right ),
		\end{equation}
		\begin{equation}\label{hyp3 prof intro}
			\forall C>0,\ f_G\circ f_H^{-1}(Ct)=O\left (f_G\circ f_H^{-1}(t)\right ).
		\end{equation}
		Then there is no $(f_G\circ f_H^{-1},\ld^0)$-integrable measure equivalence coupling from $G$ to $H$.
\end{theoremletter}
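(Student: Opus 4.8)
The plan is to argue by contradiction, turning the \emph{saturation} of Theorem~\ref{th:phi intro} at the threshold into a genuine obstruction through a self-improvement of the integrability. Set $\varphi\coloneq f_G\circ f_H^{-1}$ and suppose there is a $(\varphi,\ld^0)$-integrable measure equivalence coupling from $G$ to $H$, with associated cocycle $\alpha\colon G\times X_H\to H$. The first point to notice is that a bare application of Theorem~\ref{th:phi intro} is useless here: since $\varphi\circ f_H=f_G$ by construction and $f_G\simeq j_{1,G}$, $f_H\simeq j_{1,H}$, the conclusion $\varphi\circ j_{1,H}\preccurlyeq j_{1,G}$ reads merely $f_G\preccurlyeq f_G$. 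The role of the three hypotheses is precisely to upgrade this equality to a strict inequality. In particular, hypothesis~\eqref{hyp1 prof intro} forces $\varphi$ to be sublinear: writing $t=f_H(n)$ gives $\varphi(t)=f_G(n)=o(f_H(n))=o(t)$, so that $t\mapsto t/\varphi(t)$ tends to $+\infty$ and, after replacing $\varphi$ by an asymptotically equivalent function using the doubling assumption~\eqref{hyp3 prof intro}, may be taken non-decreasing; thus $\varphi$ satisfies the hypotheses of Theorem~\ref{th:phi intro}.

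The heart of the argument is a boosting step based on the classical de la Vallée-Poussin criterion. The function $(s,x)\mapsto\varphi(|\alpha(s,x)|_{S_H})$ is integrable on the finite-measure space $S_G\times X_H$, which is exactly the content of $(\varphi,\ld^0)$-integrability over a compact generating set $S_G$. De la Vallée-Poussin then yields a convex increasing function $\theta\colon\R_+\to\R_+$ with $\theta(s)/s\to+\infty$ and $\iint_{S_G\times X_H}\theta\big(\varphi(|\alpha(s,x)|_{S_H})\big)\,d\nu_H(x)\,ds<+\infty$. Setting $\psi\coloneq\theta\circ\varphi$, the coupling becomes $(\psi,\ld^0)$-integrable, and since $\varphi(t)\to+\infty$ we obtain the crucial \emph{strict} gain
\begin{equation*}
\frac{\psi(t)}{\varphi(t)}=\frac{\theta(\varphi(t))}{\varphi(t)}\xrightarrow[t\to+\infty]{}+\infty.
\end{equation*}
Because $\theta$ may be chosen to grow as slowly as we wish above the linear scale while $t/\varphi(t)\to+\infty$, one can arrange --- again correcting $\psi$ up to $\simeq$ by means of~\eqref{hyp2 prof intro} and~\eqref{hyp3 prof intro} --- that $\psi$ is non-decreasing and that $t\mapsto t/\psi(t)$ is non-decreasing, so that Theorem~\ref{th:phi intro} applies to $\psi$.

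Applying Theorem~\ref{th:phi intro} to the $(\psi,\ld^0)$-integrable coupling gives $\psi\circ j_{1,H}\preccurlyeq j_{1,G}$, hence $\psi\circ f_H\preccurlyeq f_G$. On the other hand, evaluating the strict gain at $t=f_H(n)\to+\infty$ and using $\varphi\circ f_H=f_G$ yields
\begin{equation*}
\frac{\psi(f_H(n))}{f_G(n)}=\frac{\psi(f_H(n))}{\varphi(f_H(n))}\xrightarrow[n\to+\infty]{}+\infty,
\end{equation*}
that is $f_G(n)=o\big(\psi(f_H(n))\big)$. This directly contradicts $\psi\circ f_H\preccurlyeq f_G$, which completes the proof. (For discrete $G,H$ one substitutes for Theorem~\ref{th:phi intro} the corresponding result of the first-named author; for non-discrete unimodular groups Theorem~\ref{th:phi intro} applies verbatim.)

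The step I expect to be the main obstacle is the boosting together with the regularization of $\psi$. In the discrete case the generating set is finite, so a single de la Vallée-Poussin function serves the finitely many maps $|\alpha(s,\cdot)|$ by a trivial diagonalization; in the locally compact setting one must produce one $\theta$ valid across the compact, uncountable generating set, which is why I would work directly on the product space $S_G\times X_H$ and then recover $(\psi,\ld^0)$-integrability in the precise sense demanded by Theorem~\ref{th:phi intro}. This recovery is delicate exactly because the cocycle does not induce measure-preserving maps of $X_H$, so passing from the generating set to arbitrary group elements via the cocycle identity requires the cross-section and ergodicity constructions of Section~\ref{sec:preparatorylemma}. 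The second delicate point is to guarantee that the boosted $\psi$ still satisfies $t\mapsto t/\psi(t)$ non-decreasing; this is where~\eqref{hyp2 prof intro} and~\eqref{hyp3 prof intro} intervene, allowing one to replace $\theta$ and $\psi$ by tame equivalent functions without sacrificing either the integrability or the strict gain over $\varphi$.
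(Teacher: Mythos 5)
Your overall strategy is the same as the paper's: assume a $(\varphi,\ld^0)$-coupling with $\varphi=f_G\circ f_H^{-1}$ exists, boost its integrability to some $\psi$ strictly beating $\varphi$, and contradict Theorem~\ref{th:phi intro}. The final contradiction, and the observation that \eqref{hyp1 prof intro} forces $\varphi$ to be sublinear, are both correct and match the paper (which packages the boosting step as Proposition~\ref{prop:ThresholdKeyProp}). The gap is in the boosting itself, where your de la Vall\'ee-Poussin shortcut leaves two real problems unresolved. First, $\psi=\theta\circ\varphi$ with $\theta$ convex is generally \emph{not} subadditive and does not have $t\mapsto t/\psi(t)$ non-decreasing; you say one can ``arrange'' this by correcting $\psi$ up to $\simeq$, but a correction that simultaneously (i) stays below $\psi$ so as to preserve integrability, (ii) makes $t/\psi(t)$ non-decreasing and $\psi$ subadditive, and (iii) retains the strict gain $\psi/\varphi\to\infty$ is precisely the non-trivial construction the paper imports from \cite[Lemma~3.2]{correiaAbsenceQuantitativelyCritical2025}, built from an explicit sequence $(K_n)\to\infty$ with $\sum_n K_n\varphi(n)\nu(|\alpha(g,\cdot)|_H=n)<\infty$; this cannot be waved through by citing \eqref{hyp2 prof intro}--\eqref{hyp3 prof intro}, which concern $\varphi$, not $\psi$.

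Second, and more seriously, integrating $\theta\circ\varphi(|\alpha(s,x)|_H)$ over the product $S_G\times X_H$ only yields $\int_{X_H}\psi(|\alpha(s,x)|_H)\,d\nu_H<+\infty$ for \emph{almost every} $s\in S_G$, whereas the definition of $\psi$-integrability requires $\sup_{s\in S_G}$ of these integrals to be finite. The paper's mechanism for this uniformization is the subadditive-norm argument at the end of the proof of Proposition~\ref{prop:ThresholdKeyProp}: because $\psi$ is subadditive, $g\mapsto\|\alpha(g,\cdot)\|_\psi$ is a subadditive symmetric function, finite on a positive-measure set $A$ that moreover contains a countable \emph{dense} subset $D$ of $G$; a Steinhaus argument makes $E_{2t_0}$ a neighbourhood of $1_G$, and compactness of $S_G$ plus density of $D$ gives the finite cover $S_G\subset\bigcup_{g\in F}gE_{2t_0}$ and hence the uniform bound. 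Note that this is exactly why the paper's Lemma~\ref{lem:keylemmaThreshold} diagonalizes over a prescribed countable dense set $D$ in addition to using Borel--Cantelli on a positive-measure set: an a.e.-$s$ statement coming from the product space does not hand you such a dense set of elements with finite $\psi$-norm, nor can the argument even start without subadditivity of $\psi$ (see the first gap). Your suggestion that the cross-section/ergodicity constructions of Section~\ref{sec:preparatorylemma} fill this hole is off-target: those are used to make the cocycles measure-preserving on return sets for the profile theorems, not to pass from a.e.-$s$ to $\sup_{s\in S_G}$ integrability.
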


\begin{theoremletter}[see Theorem~\ref{th:ThresholdGrowth}]\label{th:ThresholdGrowth intro}
    Let $G$ and $H$ be unimodular locally compact compactly generated groups. Assume that there exist two increasing functions $f_G$ and $f_H$ satisfying $f_G(n)\simeq V_G(n)$, $f_H(n)\simeq V_H(n)$ and the following assumptions as $t\to +\infty$:
		\begin{equation}\label{hyp1 growth intro}
			f_G^{-1}(t)=o\left (f_H^{-1}(t)\right ),
		\end{equation}
		\begin{equation}\label{hyp2 growth intro}
			\forall C>0,\ f_G^{-1}(Ct)=O\left (f_G^{-1}(t)\right ),
		\end{equation}
		\begin{equation}\label{hyp3 growth intro}
			\forall C>0,\ f_G^{-1}\circ f_H(Ct)=O\left (f_G^{-1}\circ f_H(t)\right ).
		\end{equation}
		Then there is no $(f_G^{-1}\circ f_H,\ld^0)$-integrable measure equivalence coupling from $G$ to $H$.
\end{theoremletter}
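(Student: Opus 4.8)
The plan is to argue by contradiction, assuming that a $(\varphi,\ld^0)$-integrable coupling from $G$ to $H$ exists for the critical map $\varphi\coloneq f_G^{-1}\circ f_H$, and to reach an absurdity by adapting the discrete argument of~\cite{correiaAbsenceQuantitativelyCritical2025} to the locally compact setting. Observe first that this critical choice makes the obstruction of Theorem~\ref{th:volumegrowth intro} tight: since $\varphi^{-1}=f_H^{-1}\circ f_G$, that theorem yields $V_G(n)\preccurlyeq V_H(\varphi^{-1}(n))\simeq f_H\big(f_H^{-1}(f_G(n))\big)\simeq V_G(n)$, so no contradiction can come from the leading-order comparison alone. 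The strict separation~\eqref{hyp1 growth intro} must therefore be exploited beyond the first order, while the doubling hypotheses~\eqref{hyp2 growth intro} and~\eqref{hyp3 growth intro} are exactly what will allow the multiplicative constants generated along the way — which otherwise sit inside the arguments of $\varphi$ and $\varphi^{-1}$ — to be absorbed without affecting asymptotics.

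Before any estimate I would carry out the two measure-theoretic reductions highlighted among the main ingredients. Using cross-sections as in~\cite[Proposition~A.1]{delabie$mathrmL^p$MeasureEquivalence2025}, I would modify the fundamental domains so that, for almost every base point $x\in X_H$, the cocycle $\g\mapsto\alpha(\g,x)$ induces a Haar-measure-preserving bijection between measurable subsets of $G$ and of $H$; this is the device that substitutes, in the non-discrete world, for the automatic measure-preservation of bijections between countable groups. Using the ideas of~\cite[Proposition~2.17~(ii)]{koivistoMeasureEquivalenceCoarse2021}, I would further assume the coupling to be ergodic. The heart of the matter is then a two-sided control of the averaged displacement functional
\[
T(n)\coloneq\int_{X_H}\int_{B_G(n)}\varphi\!\left(|\alpha(\g,x)|_{S_H}\right)\,d\mu_G(\g)\,d\nu_H(x).
\]
Because $\alpha(B_G(n),x)$ has Haar measure $V_G(n)$ and the $\varphi$-weighted integral over a subset of $H$ of prescribed measure is minimized on a centred ball, a rearrangement gives the robust lower bound $T(n)\succcurlyeq n\,V_G(n)$; conversely the cocycle identity yields the matching upper bound $T(n)\preccurlyeq n\,V_G(n)$, exactly as in the proof of Theorem~\ref{th:volumegrowth intro}, the regularity of $\varphi$ being supplied by~\eqref{hyp3 growth intro}.

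It remains to convert the saturation of these two bounds into a contradiction. Here I would follow the self-improvement scheme of~\cite{correiaAbsenceQuantitativelyCritical2025}: saturation at every scale $n$ forces the distribution of cocycle lengths to be, up to the regularity provided by~\eqref{hyp2 growth intro}--\eqref{hyp3 growth intro}, as concentrated as a ball of radius $\varphi^{-1}(n)$, and feeding the strict gap $f_G^{-1}(t)=o\left(f_H^{-1}(t)\right)$ back into the comparison across a whole range of scales produces an averaged displacement strictly larger than the linear budget that $(\varphi,\ld^0)$-integrability permits for a single generator, namely the finiteness of $\int_{X_H}\varphi(|\alpha(s,x)|_{S_H})\,d\nu_H(x)$. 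I expect this final step to be the main obstacle: since the first-order inequality is already an equality up to $\simeq$, the contradiction lives at the next order, so one must track the strict $o(\cdot)$ quantitatively through the scale-by-scale argument while checking that every measure-theoretic manipulation — the bijective structure furnished by the cross-section, the measure-preservation of the relevant shifts, and the uniform integrability in $x$ — survives the passage from counting measure to Haar measure. This bookkeeping, rather than any single inequality, is where I anticipate the real work to lie.
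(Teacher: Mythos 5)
There is a genuine gap: you never supply the actual mechanism of the contradiction. You correctly observe that the first-order comparison is tight (your two-sided bound $T(n)\asymp nV_G(n)$ yields nothing), but your proposed endgame --- ``feeding the strict gap back into the comparison across a whole range of scales produces an averaged displacement strictly larger than the linear budget'' --- is not an argument, and as framed it cannot work, because the quantity you propose to blow up, $\int_{X_H}\varphi(|\alpha(s,x)|_H)\,d\nu_H(x)$, is finite by the very hypothesis you assumed for contradiction. The paper's route is structurally different: Proposition~\ref{prop:ThresholdKeyProp} shows that \emph{any} $\varphi$-integrable cocycle with $\varphi$ sublinear is automatically $\psi$-integrable for some subadditive, non-decreasing $\psi$ with $\varphi\not\succcurlyeq\psi$. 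The engine is elementary but essential: since $\sum_n\varphi(n)\,\nu(|\alpha(g,\cdot)|_H=n)$ converges, one can insert weights $K_n\to\infty$ growing slowly enough that $\sum_n K_n\varphi(n)\,\nu(|\alpha(g,\cdot)|_H=n)$ still converges, uniformly over a positive-measure set of $g$ together with a countable dense set (Lemma~\ref{lem:keylemmaThreshold}, via Borel--Cantelli), and $\psi$ is built from $(K_n)$. The contradiction is then obtained by applying the growth obstruction (Theorem~\ref{th:volumegrowth}) to $\psi$ rather than to $\varphi$: it forces $\psi\preccurlyeq f_G^{-1}\circ f_H=\varphi$, contradicting $\varphi\not\succcurlyeq\psi$. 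Note that hypothesis~\eqref{hyp1 growth intro} is used only to guarantee that $\varphi$ is sublinear so that the proposition applies; it is not ``fed back'' into any scale-by-scale estimate.

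You also omit the one genuinely locally compact difficulty in this part of the paper: the improved integrability is first obtained only for $g$ in a positive-measure set $A$ (plus a dense countable subset), and one must upgrade this to a uniform bound $\sup_{s\in S_G}\|\alpha(s,\cdot)\|_{\psi}<+\infty$ over the compact generating set. This is done via the subadditivity of $\psi$, the cocycle identity (which makes $g\mapsto\|\alpha(g,\cdot)\|_{\psi}$ a subadditive length), and a Steinhaus-type argument showing that $\{g:\|\alpha(g,\cdot)\|_{\psi}<2t_0\}$ is a neighbourhood of the identity, so that finitely many translates cover $S_G$. Without this step the notion of $\psi$-integrability (Definition~\ref{def:IntCocycle}) is not verified, and the obstruction theorem cannot be invoked. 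Your measure-theoretic reductions (cross-sections, ergodicity) are the right preliminaries for Theorems~\ref{th:volumegrowth intro}--\ref{th:phi intro}, but they are not where the difficulty of the threshold statement lies.
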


\paragraph{Main ingredients for the extensions to the locally compact setting.} We finally record the obstacles encountered in our work, and briefly describe the main tools.
\begin{itemize}
    \item For the discrete setting, the proofs in~\cite{delabieQuantitativeMeasureEquivalence2022} cannot be immediately adapted to a proof for the locally compact setting. To extend it, we had to find a slightly different approach that we now briefly describe.\par
    First of all, the proofs in the discrete setting are easier with the \enquote{at most $m$-to-one} assumption. Indeed, since Haar measures are simply counting measures in this context, it implies that the random maps $\alpha(.,x)\colon\G\to\La$ and $\beta(.,x)\colon\La\to\G$ are not far from being Haar-measure preserving up to a multiplicative constant (although they are not bijections). Then the strategy of the authors to get rid of this assumption is to average and thus prove in fact that this property of being at most $m$-to-one occurs with high probability, which is enough to get Theorem~\ref{th:dklmtprofile}.\par
    Our approach now consists in using Haar-measure preserving maps arising from cocycles. The example of orbit equivalence, namely measure equivalence with $X_{\G}=X_{\La}$, is particularly enlightening. Indeed, a much easier proof of Theorem~\ref{th:dklmtprofile} can be found in this setting, using the fact that the cocycles provide Haar-measure preserving $\alpha(.,x)\colon\G\to\La$ and $\beta(.,x)\colon\La\to\G$, since these are bijections and Haar measures are counting measures. Coming back to the general case of measure equivalence, the analogous property is the following: if $x$ lies in the intersection $X_{\G}\cap X_{\La}$ between the fundamental domains (up to translation, we can assume that the intersection has positive measure), then the maps $\alpha(.,x)$ and $\beta(.,x)$ induce bijections (inverses of each other) between sets $R_{X_{\G}\cap X_{\La}}^{\G}(x)$ and $R_{X_{\G}\cap X_{\La}}^{\La}(x)$ defined by
    $$R_{X_{\G}\cap X_{\La}}^{\G}(x)\coloneq \{\g\in\G\mid \g\cdot x\in X_{\G}\cap X_{\La}\}$$
    and similarly for $R_{X_{\G}\cap X_{\La}}^{\La}(x)$, where the symbol $\cdot$ refers to the induced actions $\G\curvearrowright X_{\La}$ and $\La\curvearrowright X_{\G}$. For the locally compact case, it is proved in~\cite[Proposition~A.1]{delabie$mathrmL^p$MeasureEquivalence2025}, using cross-sections, that we can assume without loss of generality that the fundamental domains intersect themselves (with positive measure) and that the random bijections $\alpha(.,x)\colon R_{X_{G}\cap X_{H}}^{G}(x)\to R_{X_{G}\cap X_{H}}^{H}(x)$ and $\beta(.,x)\colon R_{X_{G}\cap X_{H}}^{H}(x)\to R_{X_{G}\cap X_{H}}^{G}(x)$ preserve the Haar measures for almost every $x\in X_G\cap X_H$.\par
    We can in fact extend this property to almost every $x\in X_{\La}$: $\alpha(.,x)$  induces a Haar-measure preserving map from $R_{X_{\G}\cap X_{\La}}^{\G}(x)$ to its image (and similarly for $\beta(.,x)$ on $R_{X_{\G}\cap X_{\La}}^{\La}(x)$, for almost every $x\in X_{\G}$). To this end, we require an ergodic coupling, and the desired property is a direct consequence of the fact that $R_{X_{\G}\cap X_{\La}}^{\G}(x)$ (resp.~$R_{X_{\G}\cap X_{\La}}^{\La}(x)$) is nonempty for almost every $x\in X_{\La}$ (resp.~$x\in X_{\G}$). Note that ergodicity is not a restrictive assumption in the discrete setting, and even in the locally compact setting, as we will verify using the ideas from~\cite[Proposition~2.17~(ii)]{koivistoMeasureEquivalenceCoarse2021}. Further details are provided in Section~\ref{sec:preparatorylemma}.\par
    We thus want a new proof in the discrete setting, that only relies on this property of the cocycles, and that can be adapted in the locally compact setting. To use the properties of cocycles, the challenging part is to prove that we do not lose information when reducing $\G$ to $R_{X_{\G}\cap X_{\La}}^{\G}(x)$, given $x\in X_{\La}$, and $\La$ to $R_{X_{\G}\cap X_{\La}}^{\La}(x)$, given $x\in X_{\G}$. To this end, we look at quantities on average, similarly to the strategies of Delabie, Koivisto, Le Maître and Tessera. As an illustration, given a map $\theta\colon\G\to\R_+$, Fubini's theorem implies that
    $$\text{average}_{x\in X_{\La}}{\int_{R_{X_{\G}\cap X_{\La}}^{\G}(x)}{\theta(\g)\mathrm{d}\lambda_{\G}(\g)}}=\text{constant}\times\int_{\G}{\theta(\g)\mathrm{d}\lambda_{\G}(\g)}$$
    (we refer the reader to the last calculations in the proof of Proposition~\ref{prop:2} for instance), where $\lambda_{\G}$ refers to the Haar measure of $\G$, which is simply the counting measure. We can thus replace $\int_{\G}{\theta(\g)\mathrm{d}\lambda_{\G}(\g)}$ by $\int_{R_{X_{\G}\cap X_{\La}}^{\G}(x)}{\theta(\g)\mathrm{d}\lambda_{\G}(\g)}$ for a huge amount of elements $x\in x_{\G}$ (with a multiplicative error).
    
    \item We finally move on to Theorems~\ref{th:ThresholdProfile intro} and~\ref{th:ThresholdGrowth intro}. For finitely generated groups, we only need to check finitely many properties to prove that a cocycle is $\varphi$-integrable, namely the convergence of integrals indexed on a finite generating set. In the non-discrete setting, for compactly generated groups, not only do we have to check uncountably many properties: the convergence of integrals indexed on a compact generating set; but the supremum of these integrals over this set must also be finite (see Definition~\ref{def:IntCocycle}). Since the first-named author~\cite{correiaAbsenceQuantitativelyCritical2025} crucially used the finiteness of a generating set in the discrete setting, the generalization to the case of locally compact compactly generated groups is not immediate and requires more tools, such as the techniques in~\cite[Appendix A.2]{baderIntegrableMeasureEquivalence2013}.
\end{itemize}

\paragraph{Outline of the paper.} After recalling some preliminaries in Section~\ref{sec:Prel}, we present in Section~\ref{sec:preparatorylemma} a preparatory lemma for the main statements of the paper. The theorem on volume growth is proven in Section~\ref{sec:volgrowth}, the ones on isoperimetric profiles in Section~\ref{sec:isopprof}, and Section~\ref{sec:threshold} deals with the integrability thresholds.

\paragraph{Acknowledgements.}

We are very grateful to Romain Tessera for his valuable advice. We also thank François Le Maître for his insightful questions regarding the content of the paper, and Vincent Dumoncel for his careful reading.

\section{Preliminaries}\label{sec:Prel}

\subsection{Conventions and notations}\label{sec:conventions}

In this paper, groups $G$ and $H$ are always assumed to be compactly generated locally compact, namely there exists a compact subset $S_G$ of $G$ such that $G=\bigcup_{n\geq 0}{S_G^n}$, and similarly for $H$. Given a compactly generated locally compact group $G$, we can define the word length by
$$|g|_G=\min{\{n\geq 0\mid g\in S_G^n\}}$$
for every $g\in G$, which gives rise to the left-invariant word metric $(g,g')\mapsto |g^{-1}g'|_G$. Given $g\in G$ and $n\geq 0$, $B_G(g,n)$ will refer to the closed ball centered at $g$ and of radius $n$. Denoting by $\la_G$ the Haar measure of $G$, the \textit{volume growth} of $G$ is the map $V_G\colon\N\to\N$ defined by
$$V_G(n)\coloneq \lambda_G\left (B_G(1_G,n)\right )=\la_G\left (\{g\in G\mid |g|_G\leq n\}\right )$$
for every integer $n\geq 0$.\par
Given non-decreasing maps $\varphi,\psi\colon\R_+\to\R_+$, we say that $\psi$ \textit{dominates} $\varphi$, denoted $\varphi(t)\preccurlyeq\psi(t)$, if there exists a constant $C>0$ such that $\varphi(t)\leq C\psi(Ct)$ for sufficiently large real numbers $x$. The maps $\varphi$ and $\psi$ are said to be \textit{asymptotically equivalent}, denoted $\varphi(t)\simeq\psi(t)$, if $\varphi(t)\preccurlyeq\psi(t)$ and $\psi(t)\preccurlyeq\varphi(t)$, and the \textit{asymptotic behaviour} refers to the class modulo $\simeq$. We also use the following stronger notions of domination:
\begin{itemize}
    \item $\varphi(t)=O(\psi(t))$ means that there exists a constant $C>0$ such that $\varphi(t)\leq C\psi(t)$ for sufficiently large real numbers $t$;
    \item $\varphi(t)=o(\psi(t))$ means that for every $\varepsilon>0$, there exists $t_{\varepsilon}>0$ such that $\varphi(t)\leq \varepsilon\psi(t)$ for every $t\geq t_{\varepsilon}$.
\end{itemize}

It is well known that two compact generating sets $S_G$ and $S'_G$ of a compactly generated locally compact group $G$ provide word metrics which are bilipschitz equivalent~\cite[Proposition~4.B.4.(3)]{cornulierMetricGeometryLocally2016}. As a consequence, the volume growths associated to $S_G$ and $S'_G$ are asymptotically equivalent (the same holds for the notions of isoperimetric profile introduced below). Thus the reason why we forget the dependency on $S_G$ in the notations $|.|_G$ and $V_G$ is that we only care about the asymptotics of such quantities.

\subsection{Quantitative measure equivalence of locally compact groups}\label{sec:Prel:ME}

The definition of measure equivalence in the locally compact setting is the following. 

\begin{definition}\label{def:coupling}
    Let $G$ and $H$ be unimodular lcsc groups, with Haar measures $\la_G$ and $\la_H$ respectively. We say that $G$ and $H$ are \textit{measure equivalent} if there exist a measure space $(\Om,\mu)$ and finite measure spaces $(X_G,\nu_G)$ and $(X_H,\nu_H)$, together with measured isomorphisms
    $$i_G\colon (G\times X_G,\la_G\otimes\nu_G)\to (\Om,\mu),$$
    $$i_H\colon (H\times X_H,\la_H\otimes\nu_H)\to (\Om,\mu),$$
    such that the $G$- and $H$-actions defined by
    $$g\ast i_G(g',x_G)=i_G(gg',x_G),$$
    $$h\ast i_H(h',x_H)=i_H(hh',x_H)$$
    commute.
\end{definition}

Without loss of generality, we will consider $X_G$ and $X_H$ as subsets of $\Omega$, so that they are fundamental domains of the $G$- and $H$-actions on $(\Omega,\mu)$ (which are essentially free and preserve the measure), and $i_G$ and $i_H$ will be the maps defined by $i_G(g,x_G)=g\ast x_G$ and $i_H(h,x_H)=h\ast x_H$.\par
The quadruple $(\Om,X_G,X_H,\mu)$ is called a \textit{measure equivalence coupling} between $G$ and $H$. It induces actions on the fundamental domains, giving rise to cocycles.

\begin{definition}\label{def:cocycle}
		A measure equivalence coupling $(\Omega,X_G,X_H,\mu)$ between unimodular lcsc groups $G$ and $H$ induces a finite measure-preserving $G$-action on $(X_H,\nu_{H})$ in the following way: for every $g\in G$ and every $x_H\in X_H$, $g\cdot x_H\in X_H$ is defined by the identity
		$$(H\ast g\ast x_H)\cap X_H=\{g\cdot x_H\},$$
		it is unique since $X_H$ is a fundamental domain for the $H$-action on $\Om$.\par
		This also yields a \textit{cocycle} $\alpha\colon G\times X_H\to H$ uniquely defined by
		$$\alpha(g,x_H)\ast g\ast x_H=g\cdot x_H,$$
		or equivalently $\alpha(g,x_H)\ast g\ast x_H\in X_H$, for almost every $x_H\in X_H$ and every $g\in G$. We similarly define a finite measure-preserving $H$-action on $(X_G,\nu_{G})$ and the associated cocycle $\beta\colon H\times X_G\to G$.
	\end{definition}
	
	\begin{remark}
		The cocycle $\alpha\colon G\times X_H\to H$ satisfies the cocycle identity
		$$\forall g_1,g_2\in G,\ \forall x_H\in X_H,\ \alpha(g_1g_2,x_H)=\alpha(g_1,g_2\cdot x_H)\alpha(g_2,x_H),$$
        as well as $\beta\colon H\times X_G\to G$.
	\end{remark}

We now introduce the quantitative versions of measure equivalence. We first define the restrictions that we add on the cocycles.

\begin{definition}\label{def:IntCocycle}
Let $G$ and $H$ be two unimodular locally compact compactly generated groups. Let $G\curvearrowright (X,\nu)$ be a measure-preserving action on a finite measure space and let $c\colon G\times X\to H$ be an $H$-valued cocycle (i.e.~a $H$-valued map satisfying the cocycle identity). Given a non-decreasing map $\varphi\colon\R_+\to\R_+$ such that $\varphi(Ct)=O(\varphi(t))$ for every constant $C>0$, we say that the cocycle $c\colon G\times X\to H$ is $\varphi$\textit{-integrable} if
\begin{equation*}
    \sup_{g\in S_G}{\int_{X}{\varphi(|c(g,x)|_H)\mathrm{d}\nu(x)}} < +\infty
\end{equation*}
\end{definition}

Examples of non-decreasing maps $\varphi\colon\R_+\to\R_+$ satisfying $\varphi(Ct)=O(\varphi(t))$ for every constant $C>0$ are:
\begin{itemize}
    \item non-decreasing subadditive maps, for instance $\varphi(t)=t^p$ with $p\in ]0,1]$;
    \item non-decreasing maps $\varphi$ such that $t\mapsto t/\varphi(t)$ is non-decreasing, for instance $\varphi(t)=\log{(1+t)}$. Indeed, if $C\leq 1$, then we immediately get $\varphi(Ct)\leq\varphi(t)$, and if $C>1$, then we have $C/\varphi(Ct)\geq t/\varphi(t)$, namely $\varphi(Ct)\leq C\varphi(t)$.
\end{itemize}
Note that this definition does not depend on the choice of the compact generating set for $H$, by assumption on $\varphi$, whereas it seems to depend on the compact generating set $S_G$ for $G$. Two remarks are in order.
\begin{itemize}
    \item First, it is possible to replace $S_G$ by $S_G^{-1}$. This is a consequence of the formula $c(g,x)=c(g^{-1},g\cdot x)^{-1}$ (provided by the cocycle identity) and the invariance of the measure.
    \item Secondly, if $\varphi$ is subadditive, being $\varphi$-integrable is the same as saying that $\int_{X}{\varphi(|c(g,x)|_H)\mathrm{d}\nu(x)}$ is finite for all $g$ in $G$ without any uniform condition on the bound, by the same argument as in~\cite[Appendix~A.2]{baderIntegrableMeasureEquivalence2013} (see also the proof of Proposition~\ref{prop:ThresholdKeyProp} which yields the main ingredients). For instance, this is the case for $\varphi(t) = t^{p}$ with $0<p\leq1$. In particular, this implies that the notion of $\varphi$-integrability does not depend on $S_G$ for such maps $\varphi$.
\end{itemize}

Let us now define the quantitative versions of measure equivalence.

\begin{definition}
    Let $G$ and $H$ be two unimodular locally compact compactly generated groups. Let $\varphi,\psi\colon\R_+\to\R_+$ be non-decreasing maps satisfying $\varphi(Ct)=O(\varphi(t))$ and $\psi(Ct)=O(\psi(t))$ for every constant $C>0$. We say that $(\Om,X_G,X_H,\mu)$ is a $(\varphi,\psi)$\textit{-integrable measure equivalence coupling} from $G$ to $H$ if it is a measure equivalence coupling between $G$ and $H$ such that the associated cocycles $\alpha\colon G\times X_H\to H$ and $\beta\colon H\times X_G\to G$ satisfy the following: $\alpha$ is $\varphi$-integrable and $\beta$ is $\psi$-integrable.
\end{definition}

For $p>0$, we write $\mathrm{L}^p$ instead of $\varphi$ or $\psi$ if we consider the map $t\mapsto t^p$, and we write $\mathrm{L}^0$ when no requirement is made on the corresponding cocycle. Finally, a measure equivalence coupling is $\varphi$\textit{-integrable} if it is $(\varphi,\varphi)$-integrable.\par
Finally, since we will make use of~\cite[Proposition~A.1]{delabie$mathrmL^p$MeasureEquivalence2025} which requires non-discrete groups, we will have to reduce the problem to this setting. The following will be particularly useful.

\begin{lemma}\label{lem:nondiscrete}
    Let $G$ and $H$ be unimodular locally compact compactly generated groups, and $\varphi,\psi\colon\R_+\to\R_+$ be increasing maps satisfying $\varphi(Ct)=O(\varphi(t))$ and $\psi(Ct)=O(\psi(t))$ for every constant $C>0$. Given non-discrete compact groups $K_G$ and $K_H$, let $\tilde G\coloneq G\times K_G$ and $\tilde H\coloneq K\times K_H$. Then the following properties hold:
    \begin{itemize}
        \item any $(\varphi,\psi)$-integrable measure equivalence coupling from $G$ and $H$ gives rise to $(\varphi,\psi)$-integrable measure equivalence coupling from $\tilde G$ and $\tilde H$;
        \item for every $p\geq 1$, $j_{p,G}(n)\simeq j_{p,\tilde G}(n)$ and $j_{p,H}(n)\simeq j_{p,\tilde H}(n)$;
        \item $V_G(n)\simeq V_{\tilde G}(n)$ and $V_H(n)\simeq V_{\tilde H}(n)$.
    \end{itemize}
    In other words, in Theorems~\ref{th:volumegrowth intro},~\ref{th:Lp intro},~\ref{th:phi intro},~\ref{th:ThresholdProfile intro} and~\ref{th:ThresholdGrowth intro}, we can assume without loss of generality that the groups $G$ and $H$ are not discrete.
\end{lemma}

\begin{proof}
    Invariance of isoperimetric profiles and volume growth follows from the fact that $G$ and $\tilde G$ are quasi-isometric, as well as $H$ and $\tilde H$. Note that, by compactness, the Haar measures $\lambda_{K_G}$ and $\lambda_{K_H}$ of $K_G$ and $K_H$ are finite, this is a crucial point for the sequel. Let $(\Omega,X_G,X_H,\mu)$ be a $(\varphi,\psi)$-integrable measure equivalence coupling from $G$ to $H$, with the maps $i_G$ and $i_H$ as in Definition~\ref{def:coupling} and the cocycles $\alpha\colon G\times X_H\to H$ and $\beta\colon H\times X_G\to G$ as in Definition~\ref{def:cocycle}. A coupling between $\tilde G$ and $\tilde H$ is built as follows:
    \begin{itemize}
        \item $\tilde\Omega=\Omega\times K_G\times K_H$, $\tilde\mu=\mu\otimes\lambda_{K_G}\otimes\lambda_{K_H}$;
        \item $\tilde X_{\tilde G}=X_G\times K_H$, $\tilde\nu_{\tilde G}=\nu_G\otimes\lambda_{K_H}$;
        \item $\tilde X_{\tilde H}=X_H\times K_G$, $\tilde\nu_{\tilde H}=\nu_H\otimes\lambda_{K_G}$;
        \item $\tilde\imath_{\tilde G}\colon ((g,k_G),(x_G,k_H))\in \tilde G\times \tilde X_{\tilde G}\mapsto (i_G(g,x_G),k_G,k_H)\in\Omega$;
        \item $\tilde\imath_{\tilde H}\colon ((h,k_H),(x_H,k_G))\in \tilde H\times \tilde X_{\tilde H}\mapsto (i_H(h,x_H),k_G,k_H)\in\Omega$,
    \end{itemize}
    In particular, the commuting actions of $\tilde G$ and $\tilde H$ on $(\tilde\Omega,\tilde\mu)$ are defined by:
    \begin{align*}
        (g,k_G)\ast \tilde\imath_G((g',k'_G),(x_G,k_H))&=(i_G(gg',x_G),k_Gk'_G,k_H)\\
        &=(g\ast i_G(g',x_G),k_Gk'_G,k_H),
    \end{align*}
    \vspace{-1cm}
    \begin{align*}
        (h,k_H)\ast \tilde\imath_H((h',k'_H),(x_H,k_G))&=(i_H(hh',x_H),k_G,k_Hk'_H)\\
        &=(h\ast i_H(h',x_H),k_G,k_Hk'_H).
    \end{align*}
    It is straightforward to prove that it defines a measure equivalence coupling. Moreover, the cocycles $\tilde\alpha\colon \tilde G\times \tilde X_{\tilde H}\to \tilde H$ and $\beta\colon \tilde H\times \tilde X_{\tilde G}\to \tilde G$ are defined by:
    $$\tilde\alpha((g,k_G),(x_H,k'_G))=(\alpha(g,x_H),1_{K_G}),$$
    $$\tilde\beta((h,k_H),(x_G,k'_H))=(\beta(h,x_G),1_{K_H}),$$
    this immediately implies that $\alpha$ is $\varphi$-integrable and $\beta$ is $\psi$-integrable.
\end{proof}

\subsection{Isoperimetric profile of locally compact groups}\label{sec:Prel:profile}

The goal of this subsection is to collect the relevant properties of the isoperimetric profile for locally compact groups. To this end, the crucial point is the definition of the $\ld^p$\textit{-norm} of the gradient. Recall that for a countable group $\G$ admitting a finite generating set $S_{\G}$, it is defined as
$$\|\nabla_{\G} f\|_p^p=\sum_{s\in S_{\G}}{\sum_{\g\in \G}{|f(\g)-f(s^{-1}\g)|^p}}=\sum_{s\in S_{\G}}{\|f-\lambda(s)f\|_p^p}$$
for every $f\colon \G\to\R$ of finite support, where $\lambda\colon \G\curvearrowright \ld^p(\G)$ is the left-regular representation. Given $G$ a locally compact group with compact generating subset $S_{G}$, and with fixed Haar measure $\la_G$, the most natural generalization is
\begin{align*}
    \|\gint_G f\|_p^p &= \int_{s\in S_G} \|f-\la(s)f\|_p^p\ \mathrm{d}\lambda_G(s)
\end{align*}
for $f\in\ld^p(G)$ with support of finite measure. We have another possible definition of the $\ld^p$-norm of the gradient, given by
\begin{align*}
    \|\gsup_G f\|_p &= \sup_{s\in S_G} \|f-\la(s)f\|_p.
\end{align*}
This second definition appears in our calculations when bounding above some quantities (see the proof of Lemma~\ref{lem:3}), so we want it to be equivalent to the first one for the underlying isoperimetric profiles to be asymptotically equivalent. Note that we easily get $\|\gint_G f\|_p^p\leq\lambda(S_G)\|\gsup_G f\|_p^p$.\par
The comparison between these notions has already been done in~\cite[Propositions~7.1 and~7.2]{tesseraLargescaleSobolevInequalities2008}. Here we provide another proof.

\begin{proposition}
    There exists a constant $C>0$ such that
    $$\frac{1}{C}\|\gsup_G f\|_p\leq \|\gint_G f\|_p\leq C\|\gsup_G f\|_p$$
    for every $f\in\ld^p(G)$.
\end{proposition}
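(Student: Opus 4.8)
The plan is to treat the right-hand inequality as essentially free and to concentrate the work on the left-hand one. Writing $g(s):=\|f-\lambda(s)f\|_p$ for $s\in G$, the two quantities to compare are $\|\gsup_G f\|_p=\sup_{s\in S_G}g(s)$ and $\|\gint_G f\|_p^p=\int_{S_G}g(s)^p\,\mathrm{d}\lambda_G(s)$. The inequality $\|\gint_G f\|_p\leq \lambda_G(S_G)^{1/p}\|\gsup_G f\|_p$ is already recorded in the text, so it only remains to produce $C>0$ with $\|\gsup_G f\|_p\leq C\|\gint_G f\|_p$ for every $f\in\ld^p(G)$. Two elementary properties of $g$ will drive the argument, both consequences of the fact that each $\lambda(s)$ is an isometry of $\ld^p(G)$ (by left-invariance of $\lambda_G$) together with the triangle inequality: $g$ is \emph{subadditive}, $g(st)\leq g(s)+g(t)$, and \emph{symmetric}, $g(s^{-1})=g(s)$. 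In particular $g(s_0u)\geq g(s_0)-g(u)$ for all $s_0,u$.

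The key difficulty is that bounding $g(s_0)$ for a fixed $s_0\in S_G$ naturally forces one to integrate $g^p$ over the translate $s_0V$ of a neighborhood $V$ of $1_G$, and $s_0V$ need not be contained in $S_G$. A direct averaging argument then reduces to an estimate of the form $\int_{S_G^2}g^p\preccurlyeq\int_{S_G}g^p$, which, since $g(s)\leq 2\|\gsup_G f\|_p$ on $S_G^2$, is essentially equivalent to the statement we are trying to prove, and hence circular. The way I would avoid this is to integrate only over the part of the translate that stays inside $S_G$, paying for it with a purely geometric quantity attached to $S_G$ rather than to $f$.

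Concretely, I would fix a compact symmetric neighborhood $V$ of $1_G$ with $V\subseteq S_G$ (assuming, as we may, that $S_G$ is a regular compact symmetric neighborhood of the identity) and set
\[
\delta:=\inf_{s\in S_G}\lambda_G(sV\cap S_G).
\]
Since $s\mapsto\lambda_G(sV\cap S_G)$ is continuous (it is a convolution of $\mathbf 1_{S_G}$ with $\mathbf 1_{V^{-1}}$) and strictly positive on the compact set $S_G$, one gets $\delta>0$. Now fix $s_0\in S_G$ with $g(s_0)>0$ and split $V$ into $A=\{u\in V:\, s_0u\in S_G\}$, of measure $\lambda_G(A)=\lambda_G(s_0V\cap S_G)\geq\delta$, and $B=\{u\in V:\, g(u)>g(s_0)/2\}$. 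On $A\setminus B$ the reverse triangle inequality gives $g(s_0u)\geq g(s_0)/2$, and since $s_0u\in S_G$ there, the change of variables $s=s_0u$ yields $\int_{A\setminus B}g(s_0u)^p\,\mathrm{d}\lambda_G(u)\leq\int_{S_G}g^p=\|\gint_G f\|_p^p$; hence $(g(s_0)/2)^p\lambda_G(A\setminus B)\leq\|\gint_G f\|_p^p$. On the other hand $B\subseteq V\subseteq S_G$ forces $(g(s_0)/2)^p\lambda_G(B)\leq\int_B g^p\leq\|\gint_G f\|_p^p$, so that $\lambda_G(B)\leq(2/g(s_0))^p\|\gint_G f\|_p^p$.

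Combining the two estimates with $\lambda_G(A\setminus B)\geq\delta-\lambda_G(B)$ gives $(g(s_0)/2)^p\,\delta\leq 2\|\gint_G f\|_p^p$, that is $g(s_0)^p\leq 2^{p+1}\delta^{-1}\|\gint_G f\|_p^p$. As this bound is uniform in $s_0\in S_G$, taking the supremum yields $\|\gsup_G f\|_p\leq(2^{p+1}/\delta)^{1/p}\|\gint_G f\|_p$, which closes the argument with $C=\max\bigl(\lambda_G(S_G)^{1/p},(2^{p+1}/\delta)^{1/p}\bigr)$. I expect the only genuinely delicate point to be the positivity $\delta>0$: this is where regularity of $S_G$ enters, and it is harmless because replacing $S_G$ by a regular compact symmetric neighborhood of $1_G$ alters both $\gint_G$ and $\gsup_G$ by bounded factors only, leaving the underlying isoperimetric profiles unchanged.
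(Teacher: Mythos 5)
Your argument is correct, but it takes a genuinely different route from the paper. The paper's proof is essentially a reduction to a cited result: it observes that $b(g)=f-\lambda(g)f$ is a coboundary, hence an element of $Z^1(G,\lambda)$, and then invokes the equivalence of the norms $\|b\|_{\mathrm{sup}}$ and $\|b\|_{\mathrm{int}}$ on that space (L\'opez-Neumann, Proposition~1.13). You instead reprove the relevant special case from scratch: the only inputs are that $g(s)=\|f-\lambda(s)f\|_p$ is subadditive and symmetric, and the rest is a local averaging argument over a translate $s_0V\cap S_G$, with the exceptional set $B$ where $g$ is large controlled by a Chebyshev-type bound. Your computation checks out line by line (in particular the conclusion $(g(s_0)/2)^p\,\delta\leq 2\|\gint_G f\|_p^p$ does follow from $\lambda_G(A\setminus B)\geq\delta-\lambda_G(B)$ and the two integral estimates), and since it uses nothing about $g$ beyond subadditivity and symmetry, it is in effect a self-contained proof of the cocycle norm-equivalence the paper cites, restricted to coboundaries. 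What the paper's route buys is brevity and a statement valid for all of $Z^1$; what yours buys is independence from the external reference and an explicit constant $\bigl(2^{p+1}/\delta\bigr)^{1/p}$. The one point to make explicit is your standing assumption that $S_G$ is a regular compact symmetric neighbourhood of $1_G$: this is genuinely needed (for $\delta>0$, and indeed for the statement itself, which fails if $\lambda_G(S_G)=0$), and the reduction to this case deserves a word, since the inequality $\|\gint_G f\|_p\preccurlyeq\|\nabla^{\mathrm{int},S'_G}_Gf\|_p$ for an enlarged generating set $S'_G\supseteq S_G$ is not the trivial containment direction; but this convention is implicit in the paper as well and is harmless at the level of asymptotic equivalence of isoperimetric profiles, which is all the proposition is used for.
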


\begin{proof}
    Let us consider $Z^1(G,\lambda)$, the set of cocycles for the left-regular representation on $\ld^p(G)$, namely the set of maps $b\colon G\to\ld^p(G)$ satisfying the $1$-cocycle relation
    $$b(gh)=b(g)+\lambda(g)b(h)$$
    for every $g,h\in G$. By~\cite[Proposition~1.13]{lopezneumannGrowthCocyclesIsometric2025}, the following are equivalent norms on $Z^1(G,\lambda)$:
    $$\|b\|_{\mathrm{sup}}\coloneq\sup_{s\in S_G}{\|b(s)\|_p},$$
    $$\|b\|_{\mathrm{int}}\coloneq\left (\int_{s\in S_G}{\|b(s)\|_p^p\ \mathrm{d}\lambda(s)}\right )^{\frac{1}{p}}.$$
    The result then follows from the fact that, for every $f\in\ld^p(G)$, we have $\|\gint_G f\|_p=\|b\|_{\mathrm{int}}$ and $\|\gsup_G f\|_p=\|b\|_{\mathrm{sup}}$ with $b\colon G\to L^p(G)$ given by $b(g)= f-\lambda(g)f$. Note that $b$ lies in $Z^1(G,\lambda)$ since it is a coboundary.  %$g\in G\mapsto f-\lambda(g)f$ 
\end{proof}

From this, we deduce that the underlying isoperimetric profiles are asymptotically equivalent. Let us fix the notations in the following.

\begin{definition}
    Let $G$ be a compactly generated locally compact group, with a compact generating set $S_G$. Let $p\geq 1$. Then we define the following two notions of $\ld^p$\textit{-isoperimetric profiles}:
    \begin{align*}
        j^{\mathrm{sup}}_{p,G}(v) = \sup_{\substack{f\in\ld^p(G), \\ \la_G(\supp(f))\leq v} } \dfrac{\|f\|_p}{\|\gsup_G f\|_p},\\
        j^{\mathrm{int}}_{p,G}(v) = \sup_{\substack{f\in\ld^p(G),\\ \la_G(\supp(f))\leq v} } \dfrac{\|f\|_p}{\|\gint_G f\|_p}.
    \end{align*}
Since these two maps are asymptotically equivalent, we will denote by $j_{p,G}$ their asymptotic behaviour.
\end{definition}

Two compact generating sets give rise to asymptotically equivalent isoperimetric profiles. Finally, note that if $G$ is unimodular, this is also the same as defining the isoperimetric profile with respect to the right-regular representation.

\section{A preparatory lemma}\label{sec:preparatorylemma}

Given a measure equivalence coupling $(\Om,X_G,X_H,\mu)$ between locally compact groups $G$ and $H$, and given the induced actions $G\curvearrowright (X_H,\nu_H)$ and $H\curvearrowright (X_G,\nu_G)$ of the groups on the fundamental domains, we set
$$R_Y^{G}(x)\coloneq\{g\in G\mid g\cdot x\in Y\}$$
for every $Y\subset X_H$ and every $x\in X_H$, and similarly $R_Y^H(x)$ for every $Y\subset X_G$ and $x\in X_G$.\par
The goal of this section is to prove the following key lemma which will be useful for our main theorems.

\begin{lemma}\label{lem:HaarPreserving}
    Let $\varphi,\psi\colon\R_+\to\R_+$ be non-decreasing maps and let $G$ and $H$ be unimodular non-discrete locally compact groups. If there exists a $(\varphi,\psi)$-integrable measure equivalence coupling from $G$ to $H$, then there exists a measure equivalence coupling $(\Omega,X_G,X_H,\mu)$ satisfying the followings:
    \begin{enumerate}[label=(P\arabic*)]
        \item\label{item:assumption1} there exists a constant $C>0$ such that $\restr{\nu_G}{X_G\cap X_H}=C\cdot \restr{\nu_H}{X_G\cap X_H}$ and $\nu_G(X_G\cap X_H)=C\cdot \nu_H(X_G\cap X_H)>0$;
        \item the associated cocycles $\alpha\colon G\times X_H\to H$ and $\beta\colon H\times X_G\to G$ are respectively $\varphi$- and $\psi$-integrable (in particular the new measure equivalence coupling is also $(\varphi,\psi)$-integrable from $G$ to $H$);
        \item\label{item:assumption3} the map 
        $$\alpha(.,x)\colon (R_{X_G\cap X_H}^G(x),\la_G)\to (\alpha(R_{X_G\cap X_H}^G(x),x),\la_H)$$
        is measure preserving for almost every $x\in X_H$, as well as the map 
        $$\beta(.,x)\colon (R_{X_G\cap X_H}^H(x),\la_H)\to (\beta(R_{X_G\cap X_H}^H(x),x),\la_G)$$
        for almost every $x\in X_G$.
    \end{enumerate}
\end{lemma}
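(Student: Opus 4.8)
The plan is to produce the required coupling by assembling two ingredients that are essentially available in the literature: the cross-section construction of \cite[Proposition~A.1]{delabie$mathrmL^p$MeasureEquivalence2025}, which reselects the fundamental domains so that the cocycles become Haar measure-preserving on a common piece, and an ergodic-decomposition argument in the spirit of \cite[Proposition~2.17~(ii)]{koivistoMeasureEquivalenceCoarse2021}, which promotes the mere mutual absolute continuity of $\nu_G$ and $\nu_H$ on the overlap to an honest proportionality. Concretely, I would first reduce to an \emph{ergodic} coupling, then run the cross-section construction (which leaves $(\Om,\mu)$ and the commuting $G\times H$-action unchanged and only re-chooses the transversals, hence preserves ergodicity), and finally read off the constant $C$ from ergodicity.

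First I would reduce to the ergodic case. Starting from the given $(\varphi,\psi)$-integrable coupling $(\Om,X_G,X_H,\mu)$, decompose $\mu$ into ergodic components for the measure-preserving $G\times H$-action. Each component still admits $X_G$ and $X_H$ as finite-mass fundamental domains and has the same cocycles $\alpha,\beta$, and, using the ideas of \cite[Proposition~2.17~(ii)]{koivistoMeasureEquivalenceCoarse2021}, the $(\varphi,\psi)$-integrability survives on a positive-measure set of components; I would fix one such ergodic component and keep the notation $(\Om,X_G,X_H,\mu)$ for it. From now on the coupling is ergodic.

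Next I would run the cross-section construction to secure (P2) and (P3). The point is to disintegrate $\mu$ simultaneously along the $G$-orbits (through $i_G$, where $\mu\cong\la_G\otimes\nu_G$) and along the $H$-orbits (through $i_H$, where $\mu\cong\la_H\otimes\nu_H$), and to choose the fundamental domains so that they share a measurable piece $W=X_G\cap X_H$ that is a transversal for both actions at once. For $x\in X_H$ and $g\in R_W^G(x)$ the point $\alpha(g,x)\ast g\ast x=g\cdot x$ lands in $W$, and comparing the two Haar systems over $W$—this is where unimodularity of both $G$ and $H$ is essential, so that the left Haar measures push forward correctly—shows that $\alpha(.,x)\colon R_W^G(x)\to\alpha(R_W^G(x),x)$ is $\la_G$-to-$\la_H$ measure preserving for almost every $x$, and symmetrically for $\beta$; this is exactly (P3). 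Since the construction only displaces the transversals within a fixed compact neighbourhood, $|\alpha(g,x)|_H$ (resp. $|\beta(h,x)|_G$) changes by a bounded additive amount, and splitting each integration domain according to whether the cocycle is large or small, together with $\nu_H(X_H)<\infty$, shows that $\varphi$- and $\psi$-integrability are preserved, giving (P2).

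Finally, for (P1), the construction makes $\restr{\nu_G}{W}$ and $\restr{\nu_H}{W}$ mutually absolutely continuous, and the Radon–Nikodym derivative $\mathrm{d}(\restr{\nu_G}{W})/\mathrm{d}(\restr{\nu_H}{W})$ is invariant under the equivalence relation induced on $W$ by the $G\times H$-orbits; since $W$ meets almost every orbit, ergodicity of the coupling makes this restricted relation ergodic, so the derivative equals a constant $C>0$ almost everywhere. This gives $\restr{\nu_G}{W}=C\cdot\restr{\nu_H}{W}$ and, integrating the constant function $1$, $\nu_G(W)=C\cdot\nu_H(W)$, with positivity coming from the fact that the common transversal $W$ carries positive transverse mass in the construction. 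I expect the genuine difficulty to lie in the cross-section construction of the third paragraph: this is where non-discreteness really bites, because the fundamental domains are $\mu$-null and $\nu_G,\nu_H$ exist only through disintegration, so producing an \emph{exact} Haar measure-preserving bijection (rather than a bounded-distortion one) requires carefully matching the two Haar systems and is the technical heart of \cite[Proposition~A.1]{delabie$mathrmL^p$MeasureEquivalence2025}. By comparison, the ergodic reduction and the extraction of the global constant $C$ are comparatively routine; the only real care is to perform them in the right order so that (P1) yields a single constant rather than one varying from component to component.
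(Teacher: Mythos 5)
Your overall strategy coincides with the paper's: first reduce to an ergodic coupling by ergodic decomposition (the paper carries this out through the induced finite measure-preserving action $G\act (X_H,\nu_H)$ rather than directly on the infinite measure $\mu$, but the idea and the survival of $(\varphi,\psi)$-integrability on almost every component are the same), then invoke the cross-section construction of Proposition~A.1 of Delabie--Llosa Isenrich--Tessera and control the cocycle distortion by a compact additive term to get (P2). Your derivation of the constant $C$ in (P1) from ergodicity of the relation restricted to $W=X_G\cap X_H$ is a reasonable variant; in the paper (P1) is read off directly from the cross-section construction.

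The genuine gap is in your treatment of (P3). The cross-section construction only yields that $\alpha(\cdot,x)$ is Haar measure preserving on $R^G_{X_G\cap X_H}(x)$ for $x$ lying \emph{in the intersection} $X_G\cap X_H$; your sentence ``comparing the two Haar systems over $W$ \dots\ shows that $\alpha(.,x)$ is measure preserving for almost every $x$'' silently extends this to almost every $x\in X_H$, which is precisely the step the paper isolates as its Step~3. For $x\in X_H\setminus(X_G\cap X_H)$ one must first know that $R^G_{X_G\cap X_H}(x)$ is non-empty --- this is where ergodicity of the induced action $G\act (X_H,\nu_H)$, and not merely of $\mu$, is actually used --- then choose $g_x$ with $g_x\cdot x\in X_G\cap X_H$ and write $\alpha(g,x)=\alpha(gg_x^{-1},g_x\cdot x)\,\alpha(g_x,x)$; measure preservation at $x$ then follows from measure preservation at the base point $g_x\cdot x$ combined with unimodularity (left translation by $g_x^{-1}$ preserves $\la_G$ and right translation by the fixed element $\alpha(g_x,x)$ preserves $\la_H$). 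You have all the ingredients --- ergodicity is set up in your first step, and you even note that $W$ meets almost every orbit when proving (P1) --- but without this transfer argument (P3) is only established for points of the intersection, which is not sufficient for the applications in Sections~4 and~5, where the measure-preservation is integrated over all of $X_H$ (respectively $X_G$).
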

If Property~\textit{\ref{item:assumption3}} is required only for the points $x$ in the intersection $X_G\cap X_H$, this lemma is relatively straightforward in the case of countable groups. Indeed, up to some translation, the fundamental domains intersect non-trivially, and it is easy to verify that $\alpha(.,x)\colon R_{X_G\cap X_H}^G(x)\to \alpha(R_{X_G\cap X_H}^G(x),x)$ and $\beta(.,x)\colon R_{X_G\cap X_H}^H(x)\to \beta(R_{X_G\cap X_H}^H(x),x)$ are bijective if $x$ lies in the intersection $X_G\cap X_H$. Since the Haar measures are the counting measures in this case, we immediately get that these bijections are measure preserving. In the non-discrete case, it is much harder and the techniques in~\cite[Proposition~A.1]{delabie$mathrmL^p$MeasureEquivalence2025} will help us, it consists in using cross-sections to slightly modify the fundamental domains so that Property~\textit{\ref{item:assumption3}} in the lemma hold for every $x$ in the intersection $X_G\cap X_H$.\par
To get Property~\textit{\ref{item:assumption3}} for almost every $x\in X_H$ for $\alpha$, and $x\in X_G$ for $\beta$, and not only for $x$ in the intersection, we require the induced actions to be ergodic, ensuring that almost every orbit in $X_G$ (resp.~in $X_H$) visits the intersection $X_G\cap X_H$. To this end, we follow the proof of~\cite[Proposition~2.17~(ii)]{koivistoMeasureEquivalenceCoarse2021}.\par
Finally, we must verify that $(\varphi,\psi)$-integrability is preserved under the transformations we apply. First, for the transformations used in~\cite[Proposition~2.17~(ii)]{koivistoMeasureEquivalenceCoarse2021}, preservation follows from the ergodic decomposition theorem. Secondly, in~\cite[Proposition~A.1]{delabie$mathrmL^p$MeasureEquivalence2025}, the new fundamental domains are obtained by translating the previous ones by elements of a compact subset of $G\times H$, ensuring a uniform bound
on the difference between the length norms of the new and original cocycles.

\begin{proof}[Proof of lemma~\ref{lem:HaarPreserving}]
    Let $(\Omega,X_G,X_H,\mu)$ be a $(\varphi,\psi)$-integrable measure equivalence coupling from $G$ to $H$. As in the definition, we denote by $i_G$, $i_H$ the measured isomorphisms
    $$i_G\colon (G\times X_G,\la_G\otimes\nu_G)\to(\Omega,\mu),$$
    $$i_H\colon (H\times X_H,\la_H\otimes\nu_H)\to(\Omega,\mu),$$
    where $\nu_G$ and $\nu_H$ are the finite measures on $X_G$ and $X_H$.\par
    \textbf{Step 1: assuming that $\mu$ is ergodic.} Let us first follow the proof of \cite[Proposition~2.17~(ii)]{koivistoMeasureEquivalenceCoarse2021}. We have to be careful with the notations, since our notations $X_H,X_G,\mu,\nu_G,\nu_H, i_G, i_H$ refer to the notations $X,Y,\eta,\mu,\nu,i,j$ in~\cite{koivistoMeasureEquivalenceCoarse2021}. By the Ergodic Decomposition Theorem, there is a standard probability space $(Z,\zeta)$ and a familly $((\nu_H)_z)_{z\in Z}$ of ergodic measures for the induced action $G\curvearrowright X_H$, such that
    \begin{equation}\label{eq:desintegration}
    \nu_H(A)=\int_{Z}{(\nu_H)_z(A)\ \mathrm{d}\zeta(z)}
    \end{equation}
    for every measurable subset $A\subset X_H$. We then set $\mu_z\coloneq (i_H)_{\star}[\la_H\otimes(\nu_H)_z]$, this is a measure on $\Omega$. It is shown in the proof of~\cite[Proposition~2.17~(ii)]{koivistoMeasureEquivalenceCoarse2021} that the measures $\mu_z$ are $\sigma$-finite and ergodic (with respect to the $(G\times H)$-action on $\Omega$) for almost all $z\in Z$, and give rise to measures $(\nu_G)_z$ on $X_G$ such that
    \begin{itemize}
        \item for every $z\in Z$, $\la_G\otimes (\nu_G)_z=(i_G^{-1})_{\star}\mu_z$;
        \item for almost every $z\in Z$, $(\nu_G)_z$ is a finite measure and is ergodic for the induced action $H\curvearrowright X_G$.
    \end{itemize}
    Given $z$ in a conull subset of $Z$ (a subset on which the above properties hold), we now consider the measure equivalence coupling $(\Omega,\mu_z,X_G,X_H)$, with the finite measures $(\nu_G)_z$ and $(\nu_H)_z$ on $X_G$ and $X_H$. By~\eqref{eq:desintegration}, we have
    $$\int_{X_H}{\varphi(|\alpha(g,x)|_H)\ \mathrm{d}\nu_H}=\int_{Z}{\left(\int_{X_H}{\varphi(|\alpha(g,x)|_H)\ \mathrm{d}(\nu_H)_z(x)}\right)\ \mathrm{d}\zeta(z)},$$
    so the cocycle $\alpha$ is $\varphi$-integrable with respect to $(\nu_H)_z$ for almost every $z\in Z$. We can also find a desintegration for $\nu_G$, similarly to~\eqref{eq:desintegration}, and prove that the other cocycle $\beta$ is $\psi$-integrable  with respect to $(\nu_G)_z$ for almost every $z\in Z$. So the couplings $(\Omega,\mu_z,X_G,X_H)$ are $(\varphi,\psi)$-integrable for almost all $z\in Z$.\par
    \textbf{Step 2: Property~\textit{\ref{item:assumption3}} for $x$ in the intersection $X_G\cap X_H$.} Let us fix $z$ in a conull subset of $Z$ for which the desired properties hold. By~\cite[Proposition~A.1]{delabie$mathrmL^p$MeasureEquivalence2025}, 
    we can choose new fundamental domains $X'_{G,z}$ and $X'_{H,z}$ satisfying Property~\textit{\ref{item:assumption1}}, and so that the new cocycles $\alpha'_z$ and $\beta'_z$ satisfy Property~\textit{\ref{item:assumption3}} for every $x\in X'_{G,z}\cap X'_{H,z}$. Furthermore, these cocycles are $\varphi$- and $\psi$-integrable, since their norms are bounded above by the norms of the previous cocycles with an additive term (see the proof of~\cite[Proposition~A.1]{delabie$mathrmL^p$MeasureEquivalence2025}).\par
    \textbf{Step 3: ergodicity for the induced actions.} For every $z$ in a conull subset of $Z$, we have built a measure equivalence coupling $(\Omega,\mu_z,X'_{G,z},X'_{H,z})$ where $\mu_z$ is ergodic and the associated cocycles satisfy the following: for almost every $y\in X'_{G,z}\cap X'_{H,z}$, the maps
    \begin{equation}\label{eq:map1}
    \alpha_z'(.,y)\colon (R_{X'_{G,z}\cap X'_{H,z}}^G(y),\la_G)\to (\alpha_z'(R_{X'_{G,z}\cap X'_{H,z}}^G(y),y),\la_H)
    \end{equation}
    and
        \begin{equation}\label{eq:map2}
        \beta_z'(.,y)\colon (R_{X'_{G,z}\cap X'_{H,z}}^H(y),\la_H)\to (\beta_z'(R_{X'_{G,z}\cap X'_{H,z}}^H(y),y),\la_G)
        \end{equation}
        are measure preserving, and the goal is now to extend it to $y\in X_H$ (for $\alpha_z'$) and to $y\in X_G$ (for $\beta_z'$), using ergodicity. Coming back to Step~1 of this proof, or equivalently to the proof of~\cite[Proposition~2.17~(ii)]{koivistoMeasureEquivalenceCoarse2021}, we know that the ergodic measure $\mu_z$ gives rise to ergodic finite measures $(\nu'_H)_z$ and $(\nu'_G)_z$ for the induced actions $G\curvearrowright X'_{H,z}$ and $H\curvearrowright X'_{G,z}$. To conclude, for almost every $x\in X'_{H,z}$, the set $R_{X'_{G,z}\cap X'_{H,z}}^G(x)$ is not empty and we pick a point $g_x$. Then for every $g\in R_{X'_{G,z}\cap X'_{H,z}}^G(x)$, the cocycle identity gives
        $$\alpha_z'(g,x)=\alpha(gg_x^{-1},g_x\cdot x)\alpha(g_x,x)$$
        where $g_x\cdot x$ lies in the intersection $X'_{G,z}\cap X'_{H,z}$. It finally follows from unimodularity, the equality $\left (R_{X'_{G,z}\cap X'_{H,z}}^G(x)\right )g_x^{-1}=R_{X'_{G,z}\cap X'_{H,z}}^G(g_x\cdot x)$ and the fact that the map~\eqref{eq:map1} (with $y=g_x\cdot x$) is measure preserving, that 
        $$\alpha_z'(.,x)\colon (R_{X'_{G,z}\cap X'_{H,z}}^G(x),\la_G)\to (\alpha_z'(R_{X'_{G,z}\cap X'_{H,z}}^G(x),x),\la_H)$$
        is measure preserving. The same holds for $\beta_z'$. This completes the proof.
\end{proof}

\section{Behaviour of volume growth}\label{sec:volgrowth}

In this section, we prove Theorem~\ref{th:volumegrowth intro} that we now recall.

\begin{theorem}\label{th:volumegrowth}
    Let $G$ and $H$ be unimodular locally compact compactly generated groups and $\varphi\colon\R_+\to\R_+$ be a non-decreasing and subadditive map. If there exists a $(\varphi,\ld^0)$-integrable measure equivalence coupling from $G$ to $H$, then
    $$V_G(n)\preccurlyeq V_H(\varphi^{-1}(n)).$$
\end{theorem}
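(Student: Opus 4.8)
The plan is to use the preparatory Lemma~\ref{lem:HaarPreserving} to reduce to a coupling with nice Haar-measure-preserving cocycles, and then to transport a ball of $G$ across the coupling into $H$ via the cocycle $\alpha$, controlling the image with the $\varphi$-integrability of $\alpha$ and Markov's inequality. By Lemma~\ref{lem:HaarPreserving}, I may assume we have a $(\varphi,\ld^0)$-integrable coupling $(\Omega,X_G,X_H,\mu)$ satisfying Properties~\textit{\ref{item:assumption1}}--\textit{\ref{item:assumption3}}; in particular $\alpha(\cdot,x)$ restricts to a Haar-measure-preserving bijection from $R^G_{X_G\cap X_H}(x)$ onto its image for almost every $x\in X_H$. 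The idea is that the volume $V_G(n)=\la_G(B_G(1_G,n))$ should be controlled by how far the cocycle $\alpha$ moves points in the ball $B_G(1_G,n)$, and $\varphi$-integrability bounds exactly this displacement in an averaged sense.

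First I would estimate, for a fixed $x$ in the intersection $X_G\cap X_H$ and $g\in B_G(1_G,n)$, the word length $|\alpha(g,x)|_H$ in terms of the cocycle values along a geodesic. Writing $g=s_1\cdots s_n$ with $s_i\in S_G$ (or padding to length $n$), the cocycle identity telescopes $\alpha(g,x)$ into a product of $n$ terms of the form $\alpha(s_i, g_i\cdot x)$, so by the triangle inequality $|\alpha(g,x)|_H\le \sum_{i=1}^n |\alpha(s_i,g_i\cdot x)|_H$. Integrating over $x\in X_H$, using the $G$-invariance of $\nu_H$ and $\varphi$-integrability of $\alpha$ over the generating set $S_G$, this yields a bound of the form $\int_{X_H}\varphi(|\alpha(g,x)|_H)\,\mathrm{d}\nu_H \preccurlyeq n$ for every $g\in B_G(1_G,n)$, where I exploit the subadditivity of $\varphi$ to pass $\varphi$ through the sum of $n$ terms. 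Summing (integrating) this over $g$ in the ball against $\la_G$ then gives a double integral bounded by something like $V_G(n)\cdot n$ — though I suspect the cleaner route is to fix $x$ and use that $\alpha(\cdot,x)$ is Haar-measure-preserving on $R^G_{X_G\cap X_H}(x)$.

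The key maneuver is then to fix a good point $x$ and observe that, since $\alpha(\cdot,x)$ preserves $\la_G\to\la_H$ on the relevant domain, the $\la_G$-measure of the set of $g\in B_G(1_G,n)\cap R^G_{X_G\cap X_H}(x)$ whose image $\alpha(g,x)$ has length at most $R$ equals $\la_H$ of a subset of $B_H(1_H,R)$, hence is at most $V_H(R)$. Taking $R=\varphi^{-1}(Cn)$ for a suitable constant, Markov's inequality applied to $\varphi(|\alpha(g,x)|_H)$ with the averaged bound $\preccurlyeq n$ ensures that, for a positive proportion of $x$, most of the ball $B_G(1_G,n)\cap R^G(x)$ maps into $B_H(1_H,\varphi^{-1}(Cn))$. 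Combining the measure-preservation with this Markov estimate gives $V_G(n)\preccurlyeq V_H(\varphi^{-1}(n))$, after absorbing the constant $C$ into the definition of $\preccurlyeq$ and using that $\varphi^{-1}$ is non-decreasing. An additional point to handle is that $R^G_{X_G\cap X_H}(x)$ need not contain the whole ball; here ergodicity (built into Lemma~\ref{lem:HaarPreserving}) and the positivity $\nu_G(X_G\cap X_H)>0$ guarantee that the intersection is visited with positive density, so a Fubini/averaging argument over $x$ recovers the full volume of the ball.

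The main obstacle I expect is the bookkeeping of the measure-preservation step: $\alpha(\cdot,x)$ is measure-preserving only on $R^G_{X_G\cap X_H}(x)$, not on all of $B_G(1_G,n)$, so I must carefully intersect the ball with this return-set and argue via Fubini (integrating over $x\in X_G\cap X_H$ against $\nu_H$, say) that the total $\la_G$-mass accounted for is comparable to $V_G(n)$ itself. Making this averaging rigorous — so that a single application of Markov's inequality on the displacement, together with the measure-preservation, collapses to the clean bound $V_G(n)\preccurlyeq V_H(\varphi^{-1}(n))$ — is the delicate part, and subadditivity of $\varphi$ is exactly what lets the linear-in-$n$ displacement bound survive under $\varphi$ and then be inverted.
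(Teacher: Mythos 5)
Your proposal is correct and follows essentially the same route as the paper's proof: reduce via Lemma~\ref{lem:HaarPreserving}, bound $\int_{X_H}\varphi(|\alpha(g,x)|_H)\,d\nu_H(x)\preccurlyeq |g|_G$ by telescoping the cocycle and using subadditivity, apply Markov's inequality (twice, in the paper: once over $X_H$ and once over the ball) to find a large set of pairs $(g,x)$ with $|\alpha(g,x)|_H\le\varphi^{-1}(Cn)$, and then combine the measure-preservation of $\alpha(\cdot,x)$ on the return set $R^G_{X_G\cap X_H}(x)$ with the return-time identity (Lemma~\ref{lem:returntime}) to trap $V_G(n)$ between a positive multiple of itself and $V_H(\varphi^{-1}(Cn))$. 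The "delicate averaging" you flag is handled in the paper by the elementary inclusion $R_Y(x)\cap B_G(1_G,n)\subset (R_Y(x)\cap G_x)\cup(B_G(1_G,n)\setminus G_x)$, which is exactly the bookkeeping step you anticipate.
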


We begin with a simple result, found for instance in~\cite[Proof of Lemma~B.11]{austinIntegrableMeasureEquivalence2016}.

\begin{lemma}\label{lem:returntime}
    Let $G\act (X,\nu)$ be a measure preserving action of a locally compact group on a probability space. Let $Y$ be a measurable subset of $X$, of positive measure. Then we have
    $$\int_{X}{\frac{\lambda_G(R_Y^G(x)\cap B_G(1_G,n))}{V_G(n)}\ d\nu(x)}=\nu(Y).$$
\end{lemma}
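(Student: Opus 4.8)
The plan is to unfold the definition of $R_Y^G(x)$ as a set of group elements, rewrite the Haar measure as an integral of an indicator, and then apply Tonelli's theorem to exchange the order of integration; the measure-preservation of the action does the rest. Concretely, since $R_Y^G(x)\cap B_G(1_G,n)=\{g\in G\mid |g|_G\le n\text{ and }g\cdot x\in Y\}$, we can write $\la_G(R_Y^G(x)\cap B_G(1_G,n))=\int_{B_G(1_G,n)}\mathbf{1}_Y(g\cdot x)\,d\la_G(g)$. Substituting this into the integral over $X$ expresses the left-hand side (before dividing by $V_G(n)$) as the double integral of the nonnegative function $(g,x)\mapsto\mathbf{1}_Y(g\cdot x)$ over $B_G(1_G,n)\times X$.

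Next I would invoke Tonelli's theorem to swap the two integrals, which is legitimate because the integrand is nonnegative and jointly measurable: the map $(g,x)\mapsto g\cdot x$ is measurable by definition of a measurable group action, $\nu$ is finite, and $B_G(1_G,n)$ has finite Haar measure $V_G(n)$. This turns the expression into $\int_{B_G(1_G,n)}\bigl(\int_X\mathbf{1}_Y(g\cdot x)\,d\nu(x)\bigr)\,d\la_G(g)$.

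For each fixed $g$, the inner integral equals $\nu(g^{-1}\cdot Y)$, and since the action $G\act(X,\nu)$ is measure-preserving this equals $\nu(Y)$, independently of $g$. Pulling the constant $\nu(Y)$ outside, the outer integral is simply $\nu(Y)\cdot\la_G(B_G(1_G,n))=\nu(Y)\cdot V_G(n)$. Dividing by $V_G(n)$ yields $\nu(Y)$, as claimed.

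There is essentially no obstacle here: the identity is a direct Fubini--Tonelli computation. The only points meriting a line of justification are the joint measurability needed to apply Tonelli (automatic for a measurable locally compact group action, together with finiteness of both $\nu$ and $\la_G\restr{}{B_G(1_G,n)}$) and the harmless fact that the pmp hypothesis is applied to $g^{-1}\cdot Y$ rather than $g\cdot Y$.
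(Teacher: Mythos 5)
Your proof is correct and follows exactly the same route as the paper's: rewrite $\la_G(R_Y^G(x)\cap B_G(1_G,n))$ as $\int_{B_G(1_G,n)}\mathds{1}_{x\in g^{-1}Y}\,d\la_G(g)$, apply Tonelli, and use that $\nu(g^{-1}Y)=\nu(Y)$ by measure preservation. Nothing to add.
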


\begin{proof}
The proof follows directly from the invariance of the Haar measure:
    \begin{align*}
        &\int_{X}{\lambda_G(R_Y^G(x)\cap B_G(1_G,n))\ d\nu(x)}\\
        =&\int_X{\int_{B_G(1_G,n)}{\mathds{1}_{x\in g^{-1}Y}\ d\lambda_G(g)}\ d\nu(x)}\\
        =&\int_{B_G(1_G,n)}{\int_X{\mathds{1}_{x\in g^{-1}Y}\ d\nu(x)}\ d\lambda_G(g)}\\
        =&\int_{B_G(1_G,n)}{\nu(g^{-1}Y)\ d\lambda_G(g)}\\
        =&\int_{B_G(1_G,n)}{\nu(Y)\ d\lambda_G(g)}\\
        =&\lambda_G({B_G(1_G,n)})\nu(Y)\\
        =&V_G(n)\nu(Y),
    \end{align*}
    which completes the result.
\end{proof}

\begin{proof}[Proof of Theorem~\ref{th:volumegrowth}]
Without loss of generality, we assume that the groups are non-discrete (using Lemma~\ref{lem:nondiscrete}) and that the properties given by Lemma~\ref{lem:HaarPreserving} hold. Let us also assume that $\nu_H(X_H)=1$. Let $Y\coloneq X_G\cap X_H$. Let $K,K'>0$ be positive constants such that
$$\mu(Y)+\left (1-\frac{1}{K}\right )\left (1-\frac{1}{K'}\right )-1>0.$$
By~\cite[Lemma~2.24]{delabieQuantitativeMeasureEquivalence2022} which also holds in our framework, there exists a constant $C>0$ such that for every $g\in G$,
    $$\int_{X_H}{\varphi(|\alpha(g,x)|_H)d\nu_H(x)}\leq C|g|_G.$$
    We set
    $$X_1\coloneq\left \{x\in X_H\ \colon \int_{B_G(1_G,n)}{\frac{\varphi(|\alpha(g,x)|_H)}{|g|_G}d\la_G(g)}\leq CKV_G(n)\right \}$$
    and for every $x\in X_H$,
    $$G_x\coloneq\{g\in B_G(1_G,n)\mid\varphi(|\alpha(g,x)|_H)\leq CKK'|g|_G\}.$$
    By Markov's inequality on the probability space $(X_H,\nu_H)$, we have
    \begin{equation}\label{eq:boundX1}
        \nu_H(X_1)\geq 1-\frac{1}{K}.
    \end{equation}
    Then Markov's inequality on the probability space $\left (B_G(1_G,n),\frac{\la_G}{V_G(n)}\right )$ gives for every $x\in X_1$,
    \begin{equation}\label{eq:boundGx}
    \la_G(G_x)\geq \left (1-\frac{1}{K'}\right )V_G(n).
    \end{equation}
    To prove the theorem, we now derive bounds of $I\coloneq\displaystyle\int_{X_H}{\la_G(R_Y(x)\cap G_x)d\nu_H(x)}$.\par
    On the one hand, using the inclusion
        $$R_Y(x)\cap B_G(1_G,n)\subset \left (R_Y(x)\cap G_x\right )\cup (B_G(1_G,n)\setminus G_x)$$
        for every $x\in X_H$, we have
        \begin{align*}
            I&\geq \int_{X_H}{\left (\la_G(R_Y(x)\cap B_G(1_G,n))+\la_G(G_x)-\la_G(B_G(1_G,n))\right )d\nu_H(x)}\\
            &\geq \int_{X_H}{\la_G(R_Y(x)\cap B_G(1_G,n))d\nu_H(x)} +\int_{X_1}{\la_G(G_x)d\nu_H(x)} -\int_{X_H}{V_G(n)d\nu_H(x)}\\
            &\geq V_G(n)\underbrace{\left (\nu_H(Y)+\left (1-\frac{1}{K}\right )\left (1-\frac{1}{K'}\right )-1\right )}_{\eqcolon K''>0},
        \end{align*}
        where we also apply Lemma~\ref{lem:returntime} and Equations~\eqref{eq:boundX1} and~\eqref{eq:boundGx}.\par
        On the other hand, using the definition of $G_x$ and Property~\textit{\ref{item:assumption3}} of Lemma~\ref{lem:HaarPreserving}, we get
        \begin{align*}
            I&\leq\int_{X_H}{\int_{R_Y^G(x)}{\mathds{1}_{\alpha(g,x)\in B_H(1_H,\varphi^{-1}(CKK'n))}\ d\la_G(g)}\ d\nu_H(x)}\\
            &=\int_{X_H}{\int_{\alpha(R_Y^G(x),x)}{\mathds{1}_{h\in B_H(1_H,\varphi^{-1}(CKK'n))}\ d\la_H(h)}\ d\nu_H(x)}\\
            &\leq V_H(\varphi^{-1}(CKK'n)).
        \end{align*}
        Hence $K''V_G(n)\leq V_H(\varphi^{-1}(CKK'n))$, completing the proof.
\end{proof}

\section{Behaviour of isoperimetric profiles}\label{sec:isopprof}

The goal of this section is to prove Theorem~\ref{th:Lp intro} and~\ref{th:phi intro}, that we now recall.

\begin{theorem}\label{th:Lp}
    Let $G$ and $H$ be unimodular locally compact compactly generated groups and $p\geq 1$. If there exists an $(\ld^p,\ld^0)$-integrable measure equivalence coupling from $G$ to $H$, then
    $$j_{p,H}(n)\preccurlyeq j_{p,G}(n).$$
\end{theorem}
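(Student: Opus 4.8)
The plan is to transfer a near-optimal test function $f$ on $H$ to a test function on $G$ through the cocycle $\alpha$, arranging that its support and $\ld^p$-norm are preserved up to multiplicative constants, while its gradient is controlled by that of $f$ via the $\ld^p$-integrability of $\alpha$. As in the proof of Theorem~\ref{th:volumegrowth}, I would first invoke Lemma~\ref{lem:HaarPreserving} to assume that the coupling satisfies~\ref{item:assumption1} and~\ref{item:assumption3}, that the induced actions are ergodic, and that $\nu_H(X_H)=1$; set $Y\coloneq X_G\cap X_H$. Since $j^{\mathrm{sup}}_{p,G}\simeq j^{\mathrm{int}}_{p,G}\simeq j_{p,G}$, I would work throughout with the $\sup$-gradient, which is the convenient one for upper bounds. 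Fixing $\varepsilon>0$, I would choose $f\colon H\to\R_+$ with $\la_H(\supp f)\le n$ and $\|f\|_p/\|\gsup_H f\|_p\ge(1-\varepsilon)\,j^{\mathrm{sup}}_{p,H}(n)$.

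For each $x\in X_H$ I would introduce the induced function $F_x\colon G\to\R_+$, equal to $f(\alpha(g,x))$ for $g\in R_Y^G(x)$ and to $0$ otherwise. Restricting to the return set $R_Y^G(x)=\{g\in G\mid g\cdot x\in Y\}$ is exactly what makes Property~\ref{item:assumption3} usable, and it has two fibrewise consequences coming from the change of variables $g\mapsto\alpha(g,x)$. First, a support bound: for almost every $x$, $\la_G(\supp F_x)\le\la_G(\{g\in R_Y^G(x)\mid\alpha(g,x)\in\supp f\})=\la_H(\alpha(R_Y^G(x),x)\cap\supp f)\le n$. Second, the norm identity $\|F_x\|_p^p=\la_H\text{-integral of } f^p \text{ over } \alpha(R_Y^G(x),x)$. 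To make this into a genuine test function I would average over the fibre: by~\ref{item:assumption3} and Fubini, $\int_{X_H}\|F_x\|_p^p\,d\nu_H(x)=\int_H f(h)^p\,P(h)\,d\la_H(h)$ with $P(h)=\nu_H(\{x\mid h\in\alpha(R_Y^G(x),x)\})$, and I would establish $P(h)\ge c>0$ by a return-time computation in the spirit of Lemma~\ref{lem:returntime} together with ergodicity of the induced action, so that $\int_{X_H}\|F_x\|_p^p\,d\nu_H\ge c\,\|f\|_p^p$.

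For the gradient I would estimate $\int_{X_H}\|\gsup_G F_x\|_p^p\,d\nu_H$. Fixing $s\in S_G$, the cocycle identity $\alpha(s^{-1}g,x)=\alpha(s^{-1},g\cdot x)\,\alpha(g,x)$ rewrites, on the interior of $R_Y^G(x)$, the finite difference $F_x(g)-F_x(s^{-1}g)$ as $f(h)-f(u\,h)$ with $h=\alpha(g,x)$ and $u=\alpha(s^{-1},g\cdot x)$, i.e.\ a left $H$-translate of $f$ by an element of word length $|\alpha(s^{-1},g\cdot x)|_H$. For a \emph{fixed} $u$ one has the displacement bound $\|f-\lambda(u^{-1})f\|_p\le|u|_H\,\|\gsup_H f\|_p$, obtained by telescoping along a geodesic of $H$ and using invariance of the $\ld^p$-norm. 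Integrating, using invariance of $\nu_H$ under the induced action, and invoking the $\ld^p$-integrability bound $\sup_{s\in S_G}\int_{X_H}|\alpha(s^{-1},x)|_H^p\,d\nu_H<+\infty$ (valid for $S_G^{-1}$, as noted after Definition~\ref{def:IntCocycle}), I would arrive at $\int_{X_H}\|\gsup_G F_x\|_p^p\,d\nu_H\le C\,\|\gsup_H f\|_p^p$. Markov's inequality on this average, combined with the lower bound $\int\|F_x\|_p^p\,d\nu_H\ge c\,\|f\|_p^p$, then yields a positive-measure set of fibres $x_0$ with $\|F_{x_0}\|_p\gtrsim\|f\|_p$, $\|\gsup_G F_{x_0}\|_p\lesssim\|\gsup_H f\|_p$ and $\la_G(\supp F_{x_0})\le n$; such an $F_{x_0}$ witnesses $j^{\mathrm{sup}}_{p,G}(n)\gtrsim(1-\varepsilon)\,j^{\mathrm{sup}}_{p,H}(n)$, and letting $\varepsilon\to0$ gives $j_{p,H}(n)\preccurlyeq j_{p,G}(n)$.

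The delicate point—where I expect the real work to lie—is the gradient estimate on the \emph{restricted} functions $F_x$. The restriction to $R_Y^G(x)$ is indispensable for the fibrewise support and norm identities (that is where~\ref{item:assumption3} applies), but it interacts badly with finite differences in two ways. On the interior, the displacement $u=\alpha(s^{-1},g\cdot x)$ varies with $g$, so the clean fixed-$u$ telescoping must be replaced by a geodesic argument with varying length, controlled only in integral form by the $\ld^p$-integrability of $\alpha$ (this is the locally compact analogue of \cite[Lemma~2.24]{delabieQuantitativeMeasureEquivalence2022}). On the boundary, the terms where exactly one of $g,s^{-1}g$ lies in $R_Y^G(x)$ contribute $f(\alpha(g,x))^p$, and one must show that, after averaging over $x$, these are dominated by $\|\gsup_H f\|_p^p$ rather than by the much larger $\|f\|_p^p$. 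Controlling these two effects—together with the lower bound $P(h)\ge c>0$, which rests on the same return-time and ergodicity mechanism—is precisely what the combination of Haar-measure preservation on $R_Y^G(x)$ and ergodicity of the induced action, furnished by Lemma~\ref{lem:HaarPreserving}, is engineered to make possible.
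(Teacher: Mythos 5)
Your overall architecture --- transfer a near-optimal test function on $H$ to a random test function on $G$ through the cocycle, control support, norm and gradient on average, and conclude by Markov --- is exactly the paper's. But one design choice derails it: you truncate the transferred function to the return set, setting $F_x=f(\alpha(\cdot,x))$ on $R_Y^G(x)$ and $0$ elsewhere. This buys a pointwise support bound, but it creates the boundary terms $|F_x(g)-F_x(s^{-1}g)|=|f(\alpha(g,x))|$ when exactly one of $g,s^{-1}g$ lies in $R_Y^G(x)$; you flag these yourself as needing to be dominated by $\|\gsup_H f\|_p^p$ after averaging, but you give no argument, and none of the ingredients you invoke (Property~\ref{item:assumption3}, ergodicity) controls them: the measure of the ``$s$-boundary'' of $R_Y^G(x)$ inside $\supp F_x$ is a dynamical quantity with no reason to be as small as $\|\gsup_Hf\|_p^p/\|f\|_p^p$. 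The paper avoids the problem entirely by \emph{not} truncating: it sets $\tilde f(h\ast x)=f(h^{-1})$ on all of $\Omega$ and $f_x(g)=\tilde f(g\ast x)$ on all of $G$, so that $\int_{X_G}\|\gint_Gf_x\|_p^p\,d\nu_G$ equals, by two exact changes of variables (Lemmas~\ref{lem:1} and~\ref{lem:2}), $\int_{S_G}\int_{X_H}\|\lambda(\alpha(s^{-1},x)^{-1})f-f\|_p^p\,d\nu_H(x)\,d\la_G(s)$ with no boundary contribution; the support is then controlled only on average (Proposition~\ref{prop:1}) and recovered pointwise at the very end by Markov. Your telescoping bound and the finiteness of $C_p$ do match Lemma~\ref{lem:3} and Proposition~\ref{prop:3}.

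A second gap sits in your norm lower bound $\int_{X_H}\|F_x\|_p^p\,d\nu_H\ge c\,\|f\|_p^p$, which rests on the claim $P(h)\ge c>0$ with $P(h)=\nu_H(\{x\mid h\in\alpha(R_Y^G(x),x)\})$. Unwinding the definitions, $P(h)=m(h^{-1}\cdot Y)$ where $m$ is the pushforward of $\nu_H$ under the projection $X_H\to X_G$ along $G$-orbits; this measure is absolutely continuous with respect to $\nu_G$ but is not invariant under the induced $H$-action, so $P(h)$ need not be bounded below uniformly in $h$ --- and a uniform bound is what you need, since $f$ is arbitrary. Neither Lemma~\ref{lem:returntime} (which only yields averages over balls in the group) nor ergodicity (which gives non-emptiness of return sets, not uniform fibre measures) delivers it. The paper's Proposition~\ref{prop:2} sidesteps this by indexing the random function by $x\in X_G$ with the measure $\nu_G$, which \emph{is} invariant under the induced $H$-action, so the weight comes out exactly as $\nu_G(h^{-1}\cdot Y)=\nu_G(Y)$. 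Both gaps disappear once you adopt the unrestricted pullback $\tilde f$ and the $X_G$-parametrization.
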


\begin{theorem}\label{th:phi}
    Let $G$ and $H$ be unimodular locally compact compactly generated groups and $\varphi\colon\R_+\to\R_+$ be a non-decreasing map such that $t\mapsto t/\varphi(t)$ is non-decreasing. If there exists a $(\varphi,\ld^0)$-integrable measure equivalence coupling from $G$ to $H$, then
    $$\varphi\circ j_{1,H}(n)\preccurlyeq j_{1,G}(n).$$
\end{theorem}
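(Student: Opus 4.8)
The plan is to mirror the structure of the volume-growth proof (Theorem~\ref{th:volumegrowth}), but now transporting a test function across the coupling rather than comparing ball volumes. First I would invoke Lemma~\ref{lem:HaarPreserving} to assume, without loss of generality, that the coupling satisfies Properties~\ref{item:assumption1}--\ref{item:assumption3}, and normalize $\nu_H(X_H)=1$; set $Y\coloneq X_G\cap X_H$, which has positive measure. The goal is to take a test function $f\colon H\to\R_+$ with $\la_H(\supp f)$ bounded, achieving (up to constants) the supremum defining $j_{1,H}$, and to build from it a test function $F\colon G\to\R_+$ whose Sobolev ratio $\|F\|_1/\|\gsup_G F\|_1$ controls $\varphi\circ j_{1,H}$ from below, while its support is controlled by $\la_H(\supp f)$ through the cocycle. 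Concretely, for $x\in X_H$ I would define $F_x(g)\coloneq f(\alpha(g,x))$ on $R^G_Y(x)$ (and $0$ elsewhere), then integrate over $x$ to obtain a single function on $G$, or argue fibrewise and average the resulting inequalities.

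The key computation is the transfer of the gradient norm. Using the cocycle identity $\alpha(sg,x)=\alpha(s,g\cdot x)\alpha(g,x)$ together with the measure-preserving property~\ref{item:assumption3}, a difference $|F_x(sg)-F_x(g)|$ becomes $|f(\alpha(s,g\cdot x)\alpha(g,x))-f(\alpha(g,x))|$, i.e.~a difference of $f$ along a step of length $|\alpha(s,g\cdot x)|_H$ in $H$. The crucial step is then to bound the $H$-translation distance $|f-\la(w)f|_1$ for a word $w$ of length $|w|_H=k$ by $k\,\|\gsup_H f\|_1$ (telescoping along a geodesic for $w$), so that the integrated $G$-gradient of $F$ is controlled by $\int_{X_H}|\alpha(s,x)|_H\,d\nu_H(x)\cdot\|\gsup_H f\|_1$, which is finite by $\varphi$-integrability combined with the hypothesis that $t\mapsto t/\varphi(t)$ is non-decreasing. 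Meanwhile, by Lemma~\ref{lem:returntime} and Property~\ref{item:assumption3}, the $L^1$-mass of $F$ reproduces $\|f\|_1$ up to the positive constant $\nu_H(Y)$, and the support $\la_G(\supp F_x)$ is comparable to $\la_H(\supp f)$ via the Haar-preserving bijection. Taking the supremum over admissible $f$ then yields $j_{1,H}(n)\preccurlyeq \big(\text{gradient inflation factor}\big)\cdot j_{1,G}(\text{rescaled }n)$.

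The main obstacle is the appearance of $\varphi$ on the left-hand side, i.e.~converting a linear-in-$|\alpha|$ bound into the nonlinear factor $\varphi\circ j_{1,H}$. I expect this to require a Markov/layer-cake argument exactly as in the volume-growth proof: rather than bounding $|\alpha(s,x)|_H$ uniformly, I would restrict to the set $X_1$ of $x$ on which $\int_{X_H}\varphi(|\alpha(s,x)|_H)\,d\nu_H(x)$ is controlled (finite by $\varphi$-integrability), use Markov's inequality to get a large-measure subset where $\varphi(|\alpha(s,x)|_H)\le \text{const}$, hence $|\alpha(s,x)|_H\le\varphi^{-1}(\text{const})$, and exploit the monotonicity of $t\mapsto t/\varphi(t)$ to turn the gradient inflation into a $\varphi^{-1}$ factor. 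The composition $\varphi\circ j_{1,H}$ then arises by rearranging the inequality $j_{1,H}(n)\preccurlyeq\varphi^{-1}(\cdot)\,j_{1,G}(\cdot)$ and applying $\varphi$ to both sides, using that $\varphi$ is non-decreasing with $\varphi(Ct)=O(\varphi(t))$. The delicate points will be keeping the support bound and the gradient bound simultaneously on the same large-measure set of $x$, and verifying that the $\gsup$ versus $\gint$ discrepancy is absorbed by the Proposition equating the two isoperimetric profiles.
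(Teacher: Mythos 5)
Your overall skeleton matches the paper's: invoke Lemma~\ref{lem:HaarPreserving}, transport a near-optimal test function $f$ on $H$ to a random family $f_x$ on $G$ via the cocycle (the paper's formula~\eqref{eq:closedformula}), control $\la_G(\supp f_x)$ and $\|f_x\|_1$ in mean (Propositions~\ref{prop:1} and~\ref{prop:2}), bound the gradient by telescoping along geodesics (Lemma~\ref{lem:3}), and finish with a Markov/pigeonhole selection of a good $x_0$. Up to that point your proposal is essentially the paper's proof.

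The gap is in the one step that actually produces the factor $\varphi$. You propose a Markov truncation: on a large-measure set of $x$ one has $\varphi(|\alpha(s,x)|_H)\leq M$, hence $|\alpha(s,x)|_H\leq\varphi^{-1}(M)$, and the gradient inflation is $\varphi^{-1}(M)$. This does not work. First, the exceptional set where $|\alpha(s,x)|_H$ is large still contributes to $\int_{S_G}\int_{X_H}\|\lambda(\alpha(s^{-1},x)^{-1})f-f\|_1\,d\nu_H\,d\la_G$; its contribution is bounded only by $2\|f\|_1$ times its measure, and if you tune the truncation level against $\|f\|_1$ to balance the two contributions you obtain a strictly weaker bound (for $\varphi(t)=t^p$ one gets an exponent $p/(1+p)$ instead of $p$). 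Second, from a bound of the form $j_{1,H}(n)\leq\varphi^{-1}(M)\,j_{1,G}(Cn)$ with $M$ constant, ``applying $\varphi$ to both sides'' does not yield $\varphi\circ j_{1,H}\preccurlyeq j_{1,G}$, since $\varphi$ does not distribute over the product. The paper's mechanism (Proposition~\ref{prop:4}) is different and pointwise: writing $t=\tfrac{t}{\varphi(t)}\varphi(t)$ with $t=\|\lambda(\alpha(s^{-1},x)^{-1})f-f\|_1$, the monotonicity of $t\mapsto t/\varphi(t)$ together with the trivial bound $t\leq 2\|f\|_1$ gives $\tfrac{t}{\varphi(t)}\leq\tfrac{2\|f\|_1}{\varphi(2\|f\|_1)}$, while Lemma~\ref{lem:3} and monotonicity of $\varphi$ give $\varphi(t)\leq\varphi(|\alpha(s^{-1},x)|_H\,\|\gsup_H f\|_1)$; after normalizing $\|\gsup_H f\|_1=1$ the latter integrates to a finite constant by $\varphi$-integrability (no truncation needed), so the mean gradient is at most a constant times $\tfrac{2\|f\|_1}{\varphi(2\|f\|_1)}$, and the ratio $\|f_{x_0}\|_1/\|\gint_G f_{x_0}\|_1$ picks up the factor $\varphi(2\|f\|_1)\simeq\varphi(j_{1,H}(v))$ directly. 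Without this interpolation step your argument does not reach the stated conclusion.
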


\subsection{Outlines of the proofs}\label{sec:outlines}

As for Theorem~\ref{th:volumegrowth intro}, we assume without loss of generality that the groups are non-discrete (using Lemma~\ref{lem:nondiscrete}) and the properties given by Lemma~\ref{lem:HaarPreserving} hold. \par
To prove Theorems~\ref{th:Lp} and~\ref{th:phi}, we fix a map $f\colon H\to\R$ whose support has finite measure. Given an integer $v\geq 0$, $f$ will be viewed as a map almost realising $j_{1,H}^{\mathrm{sup}}(v)$, namely $\la_H(\supp f)\leq v$ and $\frac{\|f\|_p}{\|\gsup_H f\|_p}$ is almost equal to $j_{1,H}(v)$.\par
From this map $f$ and the measure equivalence coupling, we deduce, via the identification $\Omega\simeq X_H\times H$, a new map $\tilde f\colon\Om\to\R$ defined by
$$\tilde f(h\ast x)=f(h^{-1})$$
for every $h\in H$ and $x\in X_H$. Moreover, from the identification $\Omega\simeq X_G\times G$, we get for every $x\in X_G$ a map $f_x\colon G\to\R$ defined by
$$f_x(g)=\tilde f (g\ast x)$$
for every $g\in G$.\par
Given $x\in X_G$, there exists another formula for the definition of $f_x$. Indeed, by definition, given $g\in G$, $f_x(g)$ is equal to $f(h^{-1})$ for some $h\in H$ satisfying $g\ast x\in h\ast X_H$. There exists a unique $h_x\in H$ such that $h_x\ast x\in X_H$, so $h^{-1}\ast g\ast x$ is a point in $X_H$ which is also equal to $h^{-1}h_x^{-1}\ast g\ast h_x\ast x$, meaning that $h^{-1}h_x^{-1}=\alpha(g,h_x\ast x)$. To conclude, $f_x$ is defined by
\begin{equation}\label{eq:closedformula}
    f_x(g)=f(\alpha(g,h_x\ast x)h_x)
\end{equation}
for every $g\in G$.\par
To sum up, a map $f\colon H\to\R$ with support of finite measure gives rise to a random map $f_x\colon G\to\R$ (random in $x\in X_G$ where $X_G$ is endowed with the finite measure $\nu_G$), given by formula~\eqref{eq:closedformula}. Furthermore, $f$ and the maps $f_x$ are also related by a map $\tilde{f}\colon\Omega\to\R$ coming from the measure equivalence coupling.

\begin{itemize}
    \item[\textbf{Step 1.}] By Proposition~\ref{prop:1}, the mean value of $\la_G(\supp f_x)$ is bounded above by $\la_H(\supp f)$ with some multiplicative constant.
    \item[\textbf{Step 2.}] Proposition~\ref{prop:2} is about the $\ld^p$-norm of $f_x$: its mean value is bounded below by $\|f\|_p$.
    \item[\textbf{Step 3.}] Propositions~\ref{prop:3} and~\ref{prop:4} finally deal with the mean of $\|\gint_G f_x\|_p^p$ and give an upper bound with $\|\gsup_H f\|_p^p$. These propositions are decomposed in three lemmas which use the function $\tilde{f}$, while the statements in the previous steps only use the formula~\eqref{eq:closedformula}. Lemma~\ref{lem:1} states that the mean value of $\|\gint_G f_x\|_p^p$ is exactly
    $$\|\gint_G \tilde{f}\|_p^p\coloneq\int_{S_G}{\int_{\Omega}{\left |\tilde f(s^{-1}\ast\omega)-\tilde f(\omega)\right |^p\mathrm{d}\mu(\omega)\ \mathrm{d}\lambda_G(s)}},$$
    and Lemmas~\ref{lem:2} and~\ref{lem:3} provides an upper bound of $\|\gint_G \tilde{f}\|_p^p$ given by $\|\gsup_H f\|_p^p$.
\end{itemize}
Theorems~\ref{th:Lp} and~\ref{th:phi} then follow from a pigeonhole principle.

\subsection{Proof of the theorems}

We follow the steps described in Section~\ref{sec:outlines} to prove Theorems~\ref{th:Lp} and~\ref{th:phi}. In the following propositions and lemmas, we assume that the groups are non-discrete (using Lemma~\ref{lem:nondiscrete}) and the properties given by Lemma~\ref{lem:HaarPreserving} hold, and we fix a map $f\colon H\to\R$ giving rise to a random map $f_x\colon G\to\R$ as described above.

\begin{proposition}\label{prop:1}
    With the same conventions and notations as in Section~\ref{sec:outlines}, there exists a positive constant $K$ (independent of $f$) such that
    $$\int_{X_G}{\la_G(\supp f_x)\ d\nu_G(x)}\leq K\la_H(\supp f).$$
\end{proposition}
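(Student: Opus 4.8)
The plan is to recognize the left-hand side as the $\mu$-measure of the support of $\tilde f$, computed through the identification $\Omega\simeq G\times X_G$, and then to recompute that same quantity through the identification $\Omega\simeq H\times X_H$. The whole statement will follow from a double application of Tonelli's theorem to the two product decompositions of $(\Omega,\mu)$, with no appeal to integrability of the cocycle or to Property~\textit{\ref{item:assumption3}}; only the measure-isomorphism structure of the coupling is needed.

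First I would observe that, by the very definition $f_x(g)=\tilde f(g\ast x)$, for each $x\in X_G$ one has $\supp f_x=\{g\in G\mid \tilde f(g\ast x)\neq 0\}$, which is precisely the slice over $x$ of the set $i_G^{-1}(\supp\tilde f)$. Since $i_G$ pushes $\la_G\otimes\nu_G$ forward to $\mu$, Tonelli's theorem gives
$$\int_{X_G}{\la_G(\supp f_x)\ d\nu_G(x)}=(\la_G\otimes\nu_G)\big(i_G^{-1}(\supp\tilde f)\big)=\mu(\supp\tilde f).$$

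Next I would compute the same quantity $\mu(\supp\tilde f)$ through the map $i_H$. Because $\tilde f(h\ast x)=f(h^{-1})$, the preimage is a product set, namely $i_H^{-1}(\supp\tilde f)=(\supp f)^{-1}\times X_H$, whence
$$\mu(\supp\tilde f)=(\la_H\otimes\nu_H)\big((\supp f)^{-1}\times X_H\big)=\la_H\big((\supp f)^{-1}\big)\,\nu_H(X_H).$$
Finally, since $H$ is unimodular its Haar measure is inversion-invariant, so $\la_H((\supp f)^{-1})=\la_H(\supp f)$. Combining the two displays yields the claim, in fact an equality, with constant $K=\nu_H(X_H)$, which is finite, positive, and independent of $f$.

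There is no genuine obstacle in this step; the only points deserving a moment of care are the measurability of $\supp\tilde f$, which is inherited from the measurability of $f$ and of the coupling maps $i_G,i_H$, and the invocation of unimodularity to pass from $(\supp f)^{-1}$ to $\supp f$. Conceptually, the key observation is simply that $\mu(\supp\tilde f)$ admits two evaluations via the two sides of the coupling, and matching them converts an average of volumes of supports on the $G$-side into a single volume of a support on the $H$-side.
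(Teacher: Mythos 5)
Your proof is correct and follows essentially the same route as the paper: both compute the $\mu$-measure of the set $(\supp f)^{-1}\ast X_H=\supp\tilde f$ once through $i_G$ (via Tonelli) and once through $i_H$, using unimodularity of $H$ at the end, arriving at the same constant $K=\nu_H(X_H)$. The only cosmetic difference is that you phrase it as an exact equality, whereas the paper passes through an inclusion of supports and records only the inequality it needs.
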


\begin{proof}
    Let $x\in X_G$. Given $g\in G$ such that $f_x(g)\not=0$, the relation between $f$ and $f_x$ (see the paragraph before the formula~\eqref{eq:closedformula}) implies that there exists $h\in\supp f$ such that $h\ast g\ast x\in X_H$, so $x\in g^{-1}\ast (\supp f)^{-1}\ast X_H$. We thus get
    \begin{align*}
        \int_{X_G}{\la_G(\supp f_x)\ d\nu_G(x)}&=\int_{X_G}{\int_{G}{\mathds{1}_{f_x(g)\not=0}\ d\la_G(g)}\ d\nu_G(x)}\\
        &\leq\int_{X_G}{\int_{G}{\mathds{1}_{x\in g^{-1}\ast (\supp f)^{-1}\ast X_H}\ d\la_G(g)}\ d\nu_G(x)}\\
        &=\int_G{\nu_G(X_G\cap (g^{-1}\ast (\supp f)^{-1}\ast X_H))\ d\la_G(g)}\\
        &=\mu((\supp f)^{-1}\ast X_H)\\
        &=\la_H((\supp f)^{-1})\ \nu_H(X_H)\\
        &=\la_H(\supp f)\ \nu_H(X_H),
    \end{align*}
    where the last equality uses unimodularity.
\end{proof}

\begin{proposition}\label{prop:2}
    With the same conventions and notations as in Section~\ref{sec:outlines}, there exists a positive constant $K'$ (independent of $f$) such that for every $p\geq 1$,
    $$\int_{X_G}{\|f_x\|_p^p\ d\nu_G(x)}\geq K'\|f\|_p^p$$
\end{proposition}

\begin{proof}
    Let $x\in X_G$ and $g\in G$. Then we have $h\ast g\ast x\in X_H$ with $h=\alpha(g,h_x\ast x)h_x$. Let us denote $Y\coloneq X_G\cap X_H$ and let us more particularly assume that $h\ast g\ast x$ lies in $Y$. On the one hand, the definition of the induced action $H\act (X_G,\nu_G)$ implies $h\ast g\ast x=h\cdot x$. On the other hand, we get from the definition of the induced action $G\act (X_H,\nu_H)$ that $h\ast g\ast x=g\cdot (h_x\ast x)$. We thus get $g\in R_Y^G(h_x\ast x)\iff h\in R_Y^H(x)$, in other words 
    $$\left\{\alpha\left (g,h_x\ast x\right )h_x\mid g\in R_Y^G(h_x\ast x)\right\}=R_Y^H(x).$$
    Using the fact that the surjective map
    $$\alpha(.,h_x\ast x)h_x\colon (R_{Y}^G(h_x\ast x),\la_G)\to (R_Y^H(x),\la_H)$$
    is measure preserving (by the properties of Lemma~\ref{lem:HaarPreserving}), we thus get
    \begin{align*}
        \int_{X_G}{\|f_x\|_p^p\ d\nu_G(x)}&=\int_{X_G}{\int_{G}{|f(\alpha(g,h_x\ast x)h_x)|^p\ d\la_G(g)}\ d\nu_G(x)}\\
        &\geq \int_{X_G}{\int_{R_Y^G(h_x\ast x)}{|f(\alpha(g,h_x\ast x)h_x)|^p\ d\la_G(g)}\ d\nu_G(x)}\\
        &=\int_{X_G}{\int_{R_Y^H(x)}{|f(h)|^p\ d\la_H(h)}\ d\nu_G(x)}\\
        &=\int_{X_G}{\int_{H}{|f(h)|^p\mathds{1}_{x\in h^{-1}\cdot Y}\ d\la_H(h)}\ d\nu_G(x)}\\
        &=\int_{H}{|f(h)|^p\nu_G(h^{-1}\cdot Y)\ d\la_H(h)}\\
        &=\nu_G(Y)\|f\|_p^p
    \end{align*}
    and we are done.
\end{proof}

We now use $\tilde f\colon\Om\to\R$ which relates $f$ and the maps $f_x$ (for $x\in X_G$) via the measure equivalence coupling. Recall that the quantity $\|\gint_G \tilde{f}\|_p$ is defined by
$$\|\gint_G \tilde{f}\|_p^p\coloneq\int_{S_G}{\int_{\Omega}{\left |\tilde f(s^{-1}\ast\omega)-\tilde f(\omega)\right |^p\mathrm{d}\mu(\omega)\ \mathrm{d}\lambda_G(s)}}.$$

\begin{lemma}\label{lem:1}
With the same conventions and notations as in Section~\ref{sec:outlines}, we have for every $p\geq 1$,
    $$\int_{X_G}{\|\gint_Gf_x||_p^p\ d\nu_G(x)}=\|\gint_G \tilde f\|_p^p$$
\end{lemma}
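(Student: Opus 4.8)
The plan is to unfold both sides of the claimed identity using the definitions and to recognize that the left-hand side is simply the right-hand side disintegrated over the fundamental domain $X_G$. Recall that $f_x(g)=\tilde f(g\ast x)$, so by the very definition of $\|\gint_G f_x\|_p^p$ we have
$$\|\gint_G f_x\|_p^p=\int_{S_G}\int_G\left|f_x(s^{-1}g)-f_x(g)\right|^p\,d\lambda_G(g)\,d\lambda_G(s)=\int_{S_G}\int_G\left|\tilde f(s^{-1}g\ast x)-\tilde f(g\ast x)\right|^p\,d\lambda_G(g)\,d\lambda_G(s).$$
Integrating this against $d\nu_G(x)$ over $X_G$ and using Tonelli's theorem to exchange the $\int_{X_G}$ and $\int_{S_G}$ integrals, the core of the argument is the inner identity
$$\int_{X_G}\int_G\left|\tilde f(s^{-1}g\ast x)-\tilde f(g\ast x)\right|^p\,d\lambda_G(g)\,d\nu_G(x)=\int_\Omega\left|\tilde f(s^{-1}\ast\omega)-\tilde f(\omega)\right|^p\,d\mu(\omega)$$
for each fixed $s\in S_G$.

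This inner identity is exactly the change of variables furnished by the measured isomorphism $i_G\colon(G\times X_G,\lambda_G\otimes\nu_G)\to(\Omega,\mu)$, under which $i_G(g,x)=g\ast x$. Writing $\omega=g\ast x$, we have $s^{-1}\ast\omega=s^{-1}g\ast x$, so the integrand on the left is precisely the pullback of the integrand on the right along $i_G$. Since $i_G$ pushes $\lambda_G\otimes\nu_G$ forward to $\mu$, the integral of any nonnegative measurable function over $(\Omega,\mu)$ equals the integral of its pullback over $(G\times X_G,\lambda_G\otimes\nu_G)$, and Tonelli lets us write the latter as the iterated integral $\int_{X_G}\int_G$. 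Substituting back and interchanging the order of integration once more to pull $\int_{S_G}$ to the outside yields the definition of $\|\gint_G\tilde f\|_p^p$.

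I would carry out the steps in this order: first expand $\|\gint_G f_x\|_p^p$ via the definition of the gradient norm and the relation $f_x(g)=\tilde f(g\ast x)$; second, integrate over $X_G$ and invoke Tonelli to move $\int_{S_G}$ outermost; third, for each fixed $s$ apply the measure-preserving change of variables $\omega=i_G(g,x)=g\ast x$, using that $s^{-1}\ast(g\ast x)=(s^{-1}g)\ast x$ and that $i_G$ is a measured isomorphism; finally reassemble to recover $\|\gint_G\tilde f\|_p^p$. Here the left-translation action by $s^{-1}\in G$ commutes with $i_G$ in the precise sense that $i_G(s^{-1}g,x)=s^{-1}\ast i_G(g,x)$, which is the whole point of the identification $\Omega\simeq G\times X_G$.

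The argument is essentially bookkeeping, and the only genuine subtlety — the step I expect to require the most care — is being scrupulous that the change of variables $i_G$ is applied correctly to the \emph{translated} argument $s^{-1}\ast\omega$, not just to $\omega$ itself. Since $i_G$ is $G$-equivariant for the action $g\ast i_G(g',x)=i_G(gg',x)$, the point $s^{-1}\ast\omega=s^{-1}\ast(g\ast x)$ corresponds to $(s^{-1}g,x)$ in coordinates, and it is crucial that the \emph{same} point $x\in X_G$ is retained; this is what allows both terms of the difference to be integrated against a single copy of $\nu_G$ and makes the Tonelli rearrangement valid. Unimodularity of $G$ guarantees that $\lambda_G$ is left-invariant so that translating the $g$-variable by $s^{-1}$ causes no Jacobian factor. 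No inequality is needed: the statement is an exact equality, obtained purely from the measured isomorphism and Tonelli's theorem.
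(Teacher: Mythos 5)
Your argument is exactly the paper's proof, which simply invokes the change of variables $(g,x)\mapsto g\ast x$ from $(G\times X_G,\lambda_G\otimes\nu_G)$ to $(\Omega,\mu)$; you have just spelled out the pullback of the integrand and the Tonelli bookkeeping. The only cosmetic quibble is that no invariance of $\lambda_G$ (and hence no unimodularity) is actually needed, since the translated point $s^{-1}\ast(g\ast x)$ is handled by pulling back the integrand along $i_G$ rather than by a substitution in the $g$-variable.
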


\begin{proof}
    It is simply a change of variable $(g,x)\mapsto g\ast x$ from $(G\times X_G,\la_G\otimes\nu_G)$ to $(\Om,\mu)$.
\end{proof}

\begin{lemma}\label{lem:2}
With the same conventions and notations as in Section~\ref{sec:outlines}, we have for every $p\geq 1$,
    $$\|\gint_G\tilde f\|_p^p=\int_{S_G}{\int_{X_H}{\|\lambda(\alpha(s^{-1},x)^{-1}) f-f\|_p^p\ d\nu_H(x)}\ d\la_G(s)}$$
\end{lemma}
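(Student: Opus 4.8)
The plan is to unfold both sides using the measured isomorphism $i_H\colon (H\times X_H,\la_H\otimes\nu_H)\to(\Omega,\mu)$, writing a generic point of $\Omega$ as $\omega=h\ast x$ with $h\in H$ and $x\in X_H$. Under this identification the inner integral over $\Omega$ becomes an iterated integral over $H\times X_H$ against $d\la_H(h)\,d\nu_H(x)$, so for a fixed $s\in S_G$ I must understand the integrand $|\tilde f(s^{-1}\ast h\ast x)-\tilde f(h\ast x)|^p$. The second term is immediate from the definition of $\tilde f$, namely $\tilde f(h\ast x)=f(h^{-1})$.

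The key computation is the first term. First I would use that the $G$- and $H$-actions commute to rewrite $s^{-1}\ast h\ast x=h\ast(s^{-1}\ast x)$. The point is then to put $s^{-1}\ast x$ back into the standard form (element of $H$)$\,\ast\,$(element of $X_H$): by definition of the cocycle, $\alpha(s^{-1},x)\ast s^{-1}\ast x=s^{-1}\cdot x\in X_H$, hence $s^{-1}\ast x=\alpha(s^{-1},x)^{-1}\ast(s^{-1}\cdot x)$. Combining this with the previous step gives $s^{-1}\ast h\ast x=(h\,\alpha(s^{-1},x)^{-1})\ast(s^{-1}\cdot x)$, and since $s^{-1}\cdot x\in X_H$, the defining relation for $\tilde f$ yields $\tilde f(s^{-1}\ast h\ast x)=f(\alpha(s^{-1},x)\,h^{-1})$. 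Thus the integrand equals $|f(\alpha(s^{-1},x)h^{-1})-f(h^{-1})|^p$.

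It remains to integrate over $h\in H$ for fixed $s$ and $x$. Performing the change of variable $k=h^{-1}$, which preserves $\la_H$ because $H$ is unimodular, turns the $H$-integral into $\int_H|f(\alpha(s^{-1},x)k)-f(k)|^p\,d\la_H(k)$. Recognizing that $f(\alpha(s^{-1},x)k)=(\lambda(\alpha(s^{-1},x)^{-1})f)(k)$ for the left-regular representation, this is exactly $\|\lambda(\alpha(s^{-1},x)^{-1})f-f\|_p^p$. Integrating back over $x\in X_H$ and $s\in S_G$ produces the claimed identity.

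The whole argument is essentially bookkeeping, and the only place demanding care is the cocycle manipulation in the second paragraph: keeping track of which group acts on which side, invoking commutativity of the two actions correctly, and not confusing $\alpha(s^{-1},x)$ with its inverse when passing between $\tilde f$ and $f$. The use of unimodularity of $H$ in the final change of variable is the single analytic ingredient; everything else is formal.
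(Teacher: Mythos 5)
Your proof is correct and follows essentially the same route as the paper's: unfold $\|\gint_G\tilde f\|_p^p$ via the isomorphism $\Omega\simeq H\times X_H$, use commutativity of the actions and the cocycle relation $s^{-1}\ast x=\alpha(s^{-1},x)^{-1}\ast(s^{-1}\cdot x)$ to compute $\tilde f(s^{-1}\ast h\ast x)=f(\alpha(s^{-1},x)h^{-1})$, then apply the measure-preserving inversion $h\mapsto h^{-1}$ granted by unimodularity. Your write-up is in fact slightly more explicit than the paper's at the commutativity step, but the argument is the same.
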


\begin{proof}
    The change of variable $h\ast x\mapsto (h,x)$ from $(\Om,\mu)$ to $(H\times X_H,\la_H\otimes\nu_H)$ gives
    $$\|\gint_G \tilde f\|_p^p=\int_{S_G}{\int_{H\times X_H}{|\tilde f(s^{-1}\ast h\ast x)-\tilde f(h\ast x)|^p\ d(\la_H\otimes\nu_H)(h,x)}\ d\la_G(s)}.$$
    Given $s\in S_G$ and $x\in X_H$, we have $s^{-1}\ast x=\alpha(s^{-1},x)^{-1}\ast (s^{-1}\cdot x)$. Now using the definition of $\tilde f$ from $f$, we obtain
    $$\tilde f(s^{-1}\ast h\ast x)=\tilde f(h\alpha(s^{-1},x)^{-1}\ast (s\cdot x))=f(\alpha(s^{-1},x)h^{-1})$$
    and
    $$\tilde f(h\ast x)=f(h^{-1})$$
    since $x$ and $s^{-1}\cdot x$ lie in $X_H$. Unimodularity of $H$ now implies that $h\mapsto h^{-1}$ preserves the Haar measure $\la_H$ and we are done.
\end{proof}

\begin{lemma}\label{lem:3}
    With the same conventions and notations as in Section~\ref{sec:outlines}, we have for any $p\geq 1$ and any $h$ in $H$,
    \begin{equation*}
        ||\lambda(h^{-1})f-f||_p \leq |h|_H ||\gsup_H f||_p
    \end{equation*}
\end{lemma}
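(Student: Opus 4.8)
The statement is the discrete/triangle-inequality heart of the whole argument: it says that the $\ld^p$-displacement of $f$ under $\lambda(h^{-1})$ grows at most linearly in the word length $|h|_H$. The natural strategy is induction on $|h|_H$ via the subadditivity of the displacement map $h\mapsto \|\lambda(h^{-1})f-f\|_p$, combined with the definition of $\|\gsup_H f\|_p$ as the supremum of single-generator displacements.

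First I would observe that the quantity $h \mapsto \|\lambda(h^{-1})f - f\|_p$ is subadditive with respect to the group operation, in the sense that for any $h_1, h_2 \in H$,
\begin{equation*}
    \|\lambda((h_1 h_2)^{-1})f - f\|_p \leq \|\lambda(h_2^{-1})f - f\|_p + \|\lambda(h_1^{-1})f - f\|_p.
\end{equation*}
This follows by writing $\lambda((h_1h_2)^{-1}) = \lambda(h_2^{-1})\lambda(h_1^{-1})$, inserting and removing $\lambda(h_2^{-1})f$, applying the triangle inequality for the $\ld^p$-norm, and then using that $\lambda(h_2^{-1})$ is an isometry of $\ld^p(H)$ (since $H$ is unimodular), so that $\|\lambda(h_2^{-1})(\lambda(h_1^{-1})f - f)\|_p = \|\lambda(h_1^{-1})f - f\|_p$.

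Next, writing $n = |h|_H$, by definition of the word length there exist $s_1, \dots, s_n \in S_H$ with $h = s_1 \cdots s_n$. Iterating the subadditivity inequality above yields
\begin{equation*}
    \|\lambda(h^{-1})f - f\|_p \leq \sum_{i=1}^{n} \|\lambda(s_i^{-1})f - f\|_p.
\end{equation*}
Each summand is at most $\sup_{s \in S_H}\|\lambda(s^{-1})f - f\|_p$, which equals $\|\gsup_H f\|_p$ (using, as noted in Definition~\ref{def:IntCocycle}, that one may replace $S_H$ by $S_H^{-1}$ in the supremum by unimodularity, or equivalently defining $\gsup$ symmetrically). Bounding all $n$ terms by this supremum gives exactly $\|\lambda(h^{-1})f - f\|_p \leq n \,\|\gsup_H f\|_p = |h|_H\,\|\gsup_H f\|_p$, as claimed.

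I do not anticipate a serious obstacle here; this is the most routine of the three lemmas. The only point requiring a little care is the direction of the generators in the $\sup$ defining $\gsup_H$: one must ensure that $\|\lambda(s^{-1})f-f\|_p$ for $s\in S_H$ is genuinely controlled by $\|\gsup_H f\|_p=\sup_{s\in S_H}\|f-\lambda(s)f\|_p$. Since $\|f-\lambda(s)f\|_p = \|\lambda(s^{-1})f - f\|_p$ by applying the isometry $\lambda(s^{-1})$, the two agree and no issue arises.
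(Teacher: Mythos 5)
Your proof is correct and follows essentially the same route as the paper: decompose $h$ (or $h^{-1}$) into generators, telescope using the triangle inequality and the fact that left translation is an isometry of $\ld^p(H)$, and bound each single-generator displacement by $\|\gsup_H f\|_p$. The only cosmetic remark is that left translation preserves the left Haar measure without any unimodularity assumption, so that hypothesis is not actually needed for the isometry step.
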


\begin{proof}
 By the definition of $|h|_H\eqcolon n$, we have that $h^{-1}=s_1 s_2\ldots s_n$ where each $s_i$ belongs to $S_H\cup S_H^{-1}$ for $i\in\{1,\cdots, n\}$. Then it is clear that:
 \begin{align*}
     ||\lambda(h^{-1}) f-f||_p &\leq \sum_{i=0}^{n-1} ||\lambda(s_1\cdots s_{i+1}) f- \lambda(s_1\cdots s_{i}) f||_p \\
     &= \sum_{i=0}^{n-1} || \lambda(s_{i+1}) f- f||_p\\
     &\leq n ||\gint_H f||_p= |h|_H ||\gint_H f||_p,
 \end{align*} 
 concluding the proof of the lemma.
 \end{proof}

 \begin{proposition}\label{prop:3}
    Let us keep the same conventions and notations as in Section~\ref{sec:outlines}. Let $p\geq 1$. Assume that
     $$C_p\coloneq \int_{S_G}{\int_{X_H}{|\alpha(s^{-1},x)|^p_H\ d\nu_H(x)}\ d\la_G(s)}$$
     is finite, then we have
     $$\int_{X_G}{\|\gint_G f_x\|_p^p\ d\nu_G(x)}\leq C_p\|\gsup_H f\|_p^p.$$
 \end{proposition}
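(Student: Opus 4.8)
The plan is to combine Lemmas~\ref{lem:1},~\ref{lem:2} and~\ref{lem:3} in sequence. First I would invoke Lemma~\ref{lem:1} to rewrite the averaged gradient as $\int_{X_G}\|\gint_G f_x\|_p^p\,d\nu_G(x)=\|\gint_G\tilde f\|_p^p$, thereby moving the computation from the random maps $f_x$ on $G$ to the single function $\tilde f$ on $\Omega$. Then Lemma~\ref{lem:2} expresses $\|\gint_G\tilde f\|_p^p$ as the integral over $S_G\times X_H$ of $\|\lambda(\alpha(s^{-1},x)^{-1})f-f\|_p^p$, converting everything into a quantity governed by the cocycle $\alpha$ and the original function $f$ on $H$.

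The heart of the argument is then to bound the integrand pointwise. For fixed $s\in S_G$ and $x\in X_H$, set $h=\alpha(s^{-1},x)^{-1}$, so $h^{-1}=\alpha(s^{-1},x)$ and $|h|_H=|\alpha(s^{-1},x)|_H$ by the symmetry of the word length under inversion. Applying Lemma~\ref{lem:3} gives
\begin{equation*}
    \|\lambda(\alpha(s^{-1},x)^{-1})f-f\|_p^p\leq |\alpha(s^{-1},x)|_H^p\,\|\gsup_H f\|_p^p.
\end{equation*}
Here one should note that Lemma~\ref{lem:3} is stated with $\|\gint_H f\|_p$ on the right-hand side, but since $\|\gint_H f\|_p\leq\lambda(S_H)^{1/p}\|\gsup_H f\|_p$ (as remarked before the relevant proposition in Section~\ref{sec:Prel:profile}, up to the harmless constant $\lambda(S_H)$), one may freely pass to $\|\gsup_H f\|_p$; alternatively, since $j^{\sup}$ and $j^{\inte}$ differ only asymptotically, this constant is absorbed.

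I would then substitute this pointwise bound back into the double integral from Lemma~\ref{lem:2}. Since $\|\gsup_H f\|_p^p$ does not depend on $s$ or $x$, it factors out, leaving
\begin{equation*}
    \|\gint_G\tilde f\|_p^p\leq\|\gsup_H f\|_p^p\int_{S_G}\int_{X_H}|\alpha(s^{-1},x)|_H^p\,d\nu_H(x)\,d\la_G(s)=C_p\,\|\gsup_H f\|_p^p,
\end{equation*}
which is exactly the claimed inequality after identifying the double integral with $C_p$. The main obstacle I anticipate is not analytical but bookkeeping: one must be careful that the constant $\lambda(S_H)$ arising from the $\gint$ versus $\gsup$ comparison in Lemma~\ref{lem:3} is either absorbed into the asymptotic equivalence or explicitly tracked, and one must confirm that $C_p<+\infty$ is genuinely the hypothesis used (which it is, by assumption). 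No integrability of $f$ beyond finite support-measure is needed, since Lemma~\ref{lem:3} is a purely geometric estimate and all measure-theoretic manipulations have already been discharged in Lemmas~\ref{lem:1} and~\ref{lem:2}.
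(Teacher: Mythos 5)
Your proof is correct and follows exactly the same route as the paper: chain Lemmas~\ref{lem:1} and~\ref{lem:2}, apply Lemma~\ref{lem:3} pointwise with $h=\alpha(s^{-1},x)^{-1}$, and factor out $\|\gsup_H f\|_p^p$. One small correction: Lemma~\ref{lem:3} is in fact \emph{stated} with $\|\gsup_H f\|_p$ on the right-hand side (it is only the last line of its proof that writes $\gint$, evidently a typo), so no $\gint$-versus-$\gsup$ conversion constant is needed at all.
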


 \begin{proof}
     By Lemmas~\ref{lem:1} and~\ref{lem:2}, we have
     $$\int_{X_G}{\|\gint_G f_x\|_p^p\ d\nu_G(x)}=\int_{S_G}{\int_{X_H}{\|\lambda(\alpha(s^{-1},x)^{-1}) f-f\|_p^p\ d\nu_H(x)}\ d\la_G(s)}.$$
     Lemma~\ref{lem:3} now implies
     $$||\lambda(\alpha(s^{-1},x)^{-1}) f-f||_p^p \leq |\alpha(s^{-1},x)|_H^p ||\gsup_H f||_p^p$$
     for every $s\in S_G$ and $x\in X_G$, and the result follows.
 \end{proof}

 \begin{proposition}\label{prop:4}
     Let us keep the same conventions and notations as in Section~\ref{sec:outlines}. Let $\varphi\colon\R_+\to\R_+$ be a non-decreasing map such that $t\mapsto t/\varphi(t)$ is non-decreasing. Assume that
     $$C_{\varphi}\coloneq \int_{S_G}{\int_{X_H}{\varphi(|\alpha(s^{-1},x)|_H\|\gsup_H f\|_1)\ d\nu_H(x)}\ d\la_G(s)}$$
     is finite, then we have
     $$\int_{X_G}{\|\gint_G f_x\|_1\ d\nu_G(x)}\leq C_{\varphi}\frac{2\|f\|_1}{\varphi(2\|f\|_1)}.$$
 \end{proposition}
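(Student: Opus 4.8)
The plan is to combine Lemmas~\ref{lem:1} and~\ref{lem:2} (specialized to $p=1$) with an interpolation between two complementary bounds on the translate $\|\lambda(\alpha(s^{-1},x)^{-1})f-f\|_1$; the interpolation is precisely where the hypothesis on $\varphi$ enters. First I would rewrite the left-hand side: by Lemmas~\ref{lem:1} and~\ref{lem:2} applied with $p=1$,
$$\int_{X_G}{\|\gint_G f_x\|_1\ d\nu_G(x)}=\int_{S_G}{\int_{X_H}{\|\lambda(\alpha(s^{-1},x)^{-1}) f-f\|_1\ d\nu_H(x)}\ d\la_G(s)},$$
so that it suffices to bound the integrand pointwise in $(s,x)$.

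Next I would record two upper bounds for $\|\lambda(h^{-1})f-f\|_1$, where I abbreviate $h=\alpha(s^{-1},x)$. On the one hand, Lemma~\ref{lem:3} with $p=1$ gives $\|\lambda(h^{-1})f-f\|_1\leq |h|_H\,\|\gsup_H f\|_1$. On the other hand, the triangle inequality together with the left-invariance of $\la_H$ (unimodularity of $H$) gives the crude bound $\|\lambda(h^{-1})f-f\|_1\leq\|\lambda(h^{-1})f\|_1+\|f\|_1=2\|f\|_1$. Writing $a\coloneq|h|_H\,\|\gsup_H f\|_1$ and $b\coloneq 2\|f\|_1$, we thus have $\|\lambda(h^{-1})f-f\|_1\leq\min(a,b)$.

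The heart of the argument is the elementary inequality
$$\min(a,b)\leq \varphi(a)\cdot\frac{b}{\varphi(b)},$$
valid for all $a,b\geq 0$ whenever $t\mapsto t/\varphi(t)$ is non-decreasing. Indeed, if $a\leq b$ then monotonicity of $t/\varphi(t)$ yields $a/\varphi(a)\leq b/\varphi(b)$, hence $a\leq\varphi(a)\,b/\varphi(b)$; if instead $a>b$ then $\varphi(a)\geq\varphi(b)$ gives $\varphi(a)\,b/\varphi(b)\geq b$, and in either case the claim holds. Applying this with $a=|\alpha(s^{-1},x)|_H\,\|\gsup_H f\|_1$ and $b=2\|f\|_1$ produces
$$\|\lambda(\alpha(s^{-1},x)^{-1})f-f\|_1\leq \varphi\bigl(|\alpha(s^{-1},x)|_H\,\|\gsup_H f\|_1\bigr)\cdot\frac{2\|f\|_1}{\varphi(2\|f\|_1)}.$$
Integrating this inequality over $(s,x)\in S_G\times X_H$ against $\la_G\otimes\nu_H$ and pulling the constant factor $2\|f\|_1/\varphi(2\|f\|_1)$ out of the integral yields exactly $C_{\varphi}\cdot 2\|f\|_1/\varphi(2\|f\|_1)$, as desired.

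I expect no serious obstacle: once the two bounds on the translate are in place, the whole proposition reduces to the one-line interpolation inequality above, which is the sole place where the structural hypothesis that $t\mapsto t/\varphi(t)$ is non-decreasing is used (mirroring how Proposition~\ref{prop:3} used only Lemma~\ref{lem:3} and monotonicity of $t\mapsto t^p$). The only point requiring minor care is ensuring $\varphi(2\|f\|_1)>0$ so that the ratio is well defined, which one may assume since otherwise $f$ is trivial and both sides vanish.
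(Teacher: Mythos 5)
Your proof is correct and follows essentially the same route as the paper's: rewrite the left-hand side via Lemmas~\ref{lem:1} and~\ref{lem:2}, bound the translate by both $2\|f\|_1$ and $|\alpha(s^{-1},x)|_H\|\gsup_H f\|_1$ (Lemma~\ref{lem:3}), and interpolate using the monotonicity of $\varphi$ and of $t\mapsto t/\varphi(t)$. The only (cosmetic, and slightly cleaner) difference is that you package the interpolation as the explicit inequality $\min(a,b)\leq\varphi(a)\,b/\varphi(b)$, whereas the paper multiplies and divides the integrand by $\varphi\bigl(\|\lambda(\alpha(s^{-1},x)^{-1})f-f\|_1\bigr)$ and bounds the two factors separately.
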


 \begin{proof}
     Using again Lemmas~\ref{lem:1} and~\ref{lem:2}, we have
     \begin{align*}
         &\int_{X_G}{\|\gint_G f_x\|_1\ d\nu_G(x)}\\
         =&\int_{S_G}{\int_{X_H}{\|\lambda(\alpha(s^{-1},x)^{-1}) f-f\|_1\ d\nu_H(x)}\ d\la_G(s)}\\
         =&\int_{S_G}{\int_{X_H}{\frac{\|\lambda(\alpha(s^{-1},x)^{-1}) f-f\|_1}{\varphi(\|\lambda(\alpha(s^{-1},x)^{-1}) f-f\|_1)}\varphi(\|\lambda(\alpha(s^{-1},x)^{-1}) f-f\|_1)\ d\nu_H(x)}\ d\la_G(s)}.
     \end{align*}
     Let $(s,x)\in S_G\times X_H$. First, the monotonicity of $t\mapsto t/\varphi(t)$ and the inequality $\|\lambda(\alpha(s^{-1},x)^{-1}) f-f\|_1\leq 2\|f\|_1$ imply
     \begin{equation}\label{eq:1prop4}
         \frac{\|\lambda(\alpha(s^{-1},x)^{-1}) f-f\|_1}{\varphi(\|\lambda(\alpha(s^{-1},x)^{-1}) f-f\|_1)}\leq \frac{2\|f\|_1}{\varphi(2\|f\|_1)}.
     \end{equation}
     Secondly, the monotonicity of $\varphi$ and Lemma~\ref{lem:3} give
     \begin{equation}\label{eq:2prop4}
         \varphi(\|\lambda(\alpha(s^{-1},x)^{-1}) f-f\|_1)\leq \varphi(|\alpha(s^{-1},x)|_H\|\gsup_H f\|_1).
     \end{equation}
     We finally deduce the result from Inequalities~\eqref{eq:1prop4} and~\eqref{eq:2prop4}.
 \end{proof}

\begin{proof}[Proof of Theorem~\ref{th:Lp}]
    We assume without loss of generality that the groups are non-discrete (using Lemma~\ref{lem:nondiscrete}) and that the measure equivalence coupling satisfies the properties provided by Lemma~\ref{lem:HaarPreserving}. Let $v$ be a positive real number,  let $f\colon H\to\R$ whose support has measure less than $v$ and the associated maps $f_x$ as explained in Section~\ref{sec:outlines}. Let $K,K',C_p$ be the constants introduced in Propositions~\ref{prop:1},~\ref{prop:2} and~\ref{prop:3}. Note that $C_p$ is finite since the cocycle is $\ld^p$ and by the comments after Definition~\ref{def:IntCocycle} (namely $S_G$ can be replaced by $S_G^{-1}$).\par
    We claim that the set
    $$A\coloneq\left\{ x\in X_G\ \colon\|\gint_G f_x\|_p^p\leq \frac{(C_p+1)}{K'}\frac{\|\gsup_H f\|_p^p}{\|f\|_p^p}\|f_x\|_p^p\right\}$$
    has positive measure. Indeed, if it was not the case, then we would have
    $$\int_{X_G}{\|\gint_G f_x\|_p^p\ d\nu_G(x)}\geq \frac{(C_p+1)}{K'}\frac{\|\gsup_H f\|_p^p}{\|f\|_p^p}\int_{X_G}{\|f_x\|_p^p\ d\nu_G(x)},$$
    namely
    $$\int_{X_G}{\|\gint_G f_x\|_p^p\ d\nu_G(x)}\geq (C_p+1)\|\gsup_H f\|_p^p$$
    by Proposition~\ref{prop:2}, and this would contradict Proposition~\ref{prop:3}.\par
    Given $\varepsilon>0$, Markov's inequality and Proposition~\ref{prop:1} imply
    $$\nu_G\left (\left\{x\in X_G\ \colon\la_G(\supp f_x)\geq \frac{1}{\varepsilon}K\la_H(\supp f)\right\}\right )\leq\varepsilon,$$
    so assuming $\varepsilon<\mu(A)$, we find $x_0\in A$ such that $\la_G(\supp f_{x_0})\leq \frac{1}{\varepsilon}K\la_H(\supp f)\leq \frac{K}{\varepsilon}v$.\par
    This finally gives
    $$j^{\mathrm{int}}_{p,G}\left (\frac{K}{\varepsilon}v\right )\geq \frac{\|f_{x_0}\|_p}{\|\gint_Gf_{x_0}\|_p}\geq \left (\frac{K'}{C_p+1}\right )^{1/p}\frac{\|f\|_p}{\|\gsup_Hf\|_p}$$
    and taking the supremum over $f$, we get
    $$j^{\mathrm{int}}_{p,G}\left (\frac{K}{\varepsilon}v\right )\geq \left (\frac{K'}{C_p+1}\right )^{1/p}j^{\mathrm{sup}}_{p,H}(v),$$
    which completes the proof.
\end{proof}

\begin{proof}[Proof of Theorem~\ref{th:phi}]
    We assume without loss of generality that the groups are non-discrete (using Lemma~\ref{lem:nondiscrete}) and the measure equivalence coupling satisfies the properties provided by Lemma~\ref{lem:HaarPreserving}. Let $v$ be a positive real number,  let $f\colon H\to\R$ whose support has measure less than $v$ and the associated maps $f_x$ as explained in Section~\ref{sec:outlines}. We assume that $\|\gsup_H f\|_1=1$ (it is easy to see that the definition of the isoperimetric profile can be restricted to such functions). Let $K,K',C_{\varphi}$ be the constants introduced in Propositions~\ref{prop:1},~\ref{prop:2} and~\ref{prop:4}. Since we have $\|\gsup_H f\|_1=1$, the constant $C_{\varphi}$ does not depend on $f$. Moreover, this constant is finite by assumption and the comments after Definition~\ref{def:IntCocycle} (namely $S_G$ can be replaced by $S_G^{-1}$).\par
    We claim that the set
    $$A\coloneq\left\{ x\in X_G\ \colon\|\gint_G f_x\|_1\leq \frac{(C_{\varphi}+1)}{K'}\frac{2}{\varphi(2\|f\|_1)}\|f_x\|_1\right\}$$
    has positive measure. Indeed, if it was not the case, then we would have
    $$\int_{X_G}{\|\gint_G f_x\|_1\ d\nu_G(x)}\geq \frac{(C_{\varphi}+1)}{K'}\frac{2}{\varphi(2\|f\|_1)}\int_{X_G}{\|f_x\|_1\ d\nu_G(x)},$$
    namely
    $$\int_{X_G}{\|\gint_G f_x\|_1\ d\nu_G(x)}\geq (C_{\varphi}+1)\frac{2\|f\|_1}{\varphi(2\|f\|_1)}$$
    by Proposition~\ref{prop:2}, and this would contradict Proposition~\ref{prop:4}.\par
    Given $\varepsilon>0$, Markov's inequality and Proposition~\ref{prop:1} imply
    $$\nu_G\left (\left\{x\in X_G\ \colon\la_G(\supp f_x)\geq \frac{1}{\varepsilon}K\la_H(\supp f)\right\}\right )\leq\varepsilon,$$
    so assuming $\varepsilon<\mu(A)$, we find $x_0\in A$ such that $\la_G(\supp f_{x_0})\leq \frac{1}{\varepsilon}K\la_H(\supp f)\leq \frac{K}{\varepsilon}v$.\par
    This finally gives
    $$j^{\mathrm{int}}_{1,G}\left (\frac{K}{\varepsilon}v\right )\geq \frac{\|f_{x_0}\|_1}{\|\gint_Gf_{x_0}\|_1}\geq \frac{K'}{2(C_{\varphi}+1)}\varphi(2\|f\|_1)$$
    and taking the supremum over the maps $f$ satisfying $\|\gsup_Hf\|_1=1$, we get
    $$j^{\mathrm{int}}_{p,G}\left (\frac{K}{\varepsilon}v\right )\geq \frac{K'}{2(C_{\varphi}+1)}\varphi(2j_{1,H}(v))\geq \frac{K'}{2(C_{\varphi}+1)}\varphi(j^{\mathrm{sup}}_{1,H}(v)),$$
    which completes the proof.
\end{proof}

\section{Absence of quantitatively critical measure equivalence couplings}\label{sec:threshold}

Theorem~\ref{th:phi intro} states that, if there exists a $(\varphi,\ld^0)$-integrable measure equivalence coupling from $G$ to $H$, then most of the time we have the following bound on $\varphi$:
\begin{equation}\label{eq:upperbound}
    \varphi\preccurlyeq j_{1,G}\circ j_{1,H}^{-1},
\end{equation}
and we would like to know if this upper bound can be reached. For discrete amenable groups, the first-named author proved that such upper bound cannot be achieved~\cite[Theorem~B]{correiaAbsenceQuantitativelyCritical2025}, and we want to generalize it to the more general setting of locally compact compactly generated groups.\par
We need mild assumptions to get the inequality~\eqref{eq:upperbound}. First, we have to assume that $j_{1,H}$ is injective, so as to consider its inverse. We know that the isoperimetric profile is asymptotically equivalent to an injective map (see e.g.~\cite[Remark~1.2]{correiaAbsenceQuantitativelyCritical2025}) so, in some sense, we may assume without loss of generality that the isoperimetric profile is injective. Secondly, the inequality $\varphi\circ j_{1,H}\preccurlyeq j_{1,G}$ means that there exists a constant $C>0$ such that $\varphi\circ j_{1,H}(t)=O\left (j_{1,G}(Ct)\right )$, which means that $\varphi(t)=O\left (j_{1,G}(C j_{1,H}^{-1}(t))\right )$. To get rid of this constant and obtain the desired composition $j_{1,G}\circ j_{1,H}^{-1}$, we will assume that the isoperimetric profile of $G$ satisfies
\begin{equation*}
    \forall C>0,\ j_{1,G}(Ct)=O\left (j_{1,G}(t)\right ).
\end{equation*}
This condition already appeared in~\cite{erschlerIsoperimetricProfilesFinitely2003, correiaIsoperimetricProfilesLamplighterlike2025}. There is no known example of a compactly generated group whose isoperimetric does not satisfy this condition.\par
Extending the proof of~\cite[Theorem~B]{correiaAbsenceQuantitativelyCritical2025} to the case of locally compact compactly generated amenable groups, we want to show Theorem~\ref{th:ThresholdProfile intro} that we recall here.

\begin{theorem}\label{th:ThresholdProfile}
    Let $G$ and $H$ be unimodular locally compact compactly generated groups. Assume that there exist a non-decreasing function $f_G$ and an increasing function $f_H$ satisfying $f_G(n)\simeq j_{1,G}(n)$, $f_H(n)\simeq j_{1,H}(n)$ and the following assumptions as $t\to +\infty$:
		\begin{equation}\label{hyp1 prof}
			f_G(t)=o\left (f_H(t)\right ),
		\end{equation}
		\begin{equation}\label{hyp2 prof}
			\forall C>0,\ f_G(Ct)=O\left (f_G(t)\right ),
		\end{equation}
		\begin{equation}\label{hyp3 prof}
			\forall C>0,\ f_G\circ f_H^{-1}(Ct)=O\left (f_G\circ f_H^{-1}(t)\right ).
		\end{equation}
		Then there is no $(f_G\circ f_H^{-1},\ld^0)$-integrable measure equivalence coupling from $G$ to $H$.
\end{theorem}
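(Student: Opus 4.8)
The plan is to argue by contradiction: suppose there is a $(\varphi,\ld^0)$-integrable measure equivalence coupling from $G$ to $H$, where $\varphi\coloneq f_G\circ f_H^{-1}$. First I would record a few reductions. Since $f_G(t)=o(f_H(t))$ by~\eqref{hyp1 prof}, the substitution $t=f_H^{-1}(u)$ gives $\varphi(u)=o(u)$, so $\varphi$ is sublinear, while~\eqref{hyp2 prof} and~\eqref{hyp3 prof} say that $f_G$ and $\varphi$ are doubling. These regularity properties allow me to replace $\varphi$ by an asymptotically equivalent function with the monotonicity of $t\mapsto t/\varphi(t)$ required to run the argument of Section~\ref{sec:isopprof}, which I will do tacitly. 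Note also that $f_G=o(f_H)$ forces $f_H\to+\infty$, hence $j_{1,H}\to+\infty$, a fact that will deliver the final contradiction.

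The heart of the proof is a \emph{self-improvement} of integrability at the critical threshold. Working on the finite measure space $(S_G\times X_H,\la_G\otimes\nu_H)$ (finite since $S_G$ is compact and $\nu_H$ is finite), the $\varphi$-integrability of $\alpha$ shows that the function $(s,x)\mapsto\varphi(|\alpha(s,x)|_H)$ lies in $\ld^1(\la_G\otimes\nu_H)$. Applying a de la Vallée Poussin type statement to this single integrable function produces a doubling, convex, increasing $\Phi\colon\R_+\to\R_+$ with $\Phi(t)/t\to+\infty$ and $\int_{S_G\times X_H}\Phi(\varphi(|\alpha(s,x)|_H))\,d(\la_G\otimes\nu_H)<+\infty$. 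Setting $\psi\coloneq\Phi\circ\varphi$, this is exactly finiteness of the integrated constant $C_\psi$ appearing in the proof of Theorem~\ref{th:phi}, so the coupling behaves as a $\psi$-integrable one for the purposes of that obstruction; moreover $\varphi=o(\psi)$ since $\Phi(t)/t\to+\infty$. I expect this to be the main obstacle: the integrability improvement itself is soft, but arranging $\Phi$ to grow regularly enough that $\psi$ remains an admissible function (doubling, with $t\mapsto t/\psi(t)$ non-decreasing, so that Theorem~\ref{th:phi} or the ingredients of Proposition~\ref{prop:ThresholdKeyProp} apply) is the delicate point and may require truncating the growth of $\Phi$ while keeping $\Phi(t)/t\to+\infty$.

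With such a $\psi$ in hand, applying the obstruction of Section~\ref{sec:isopprof} yields $\psi\circ j_{1,H}(n)\preccurlyeq j_{1,G}(n)$. On the other hand, the threshold computes $\varphi\circ j_{1,H}$ exactly: using $j_{1,H}\simeq f_H$, $j_{1,G}\simeq f_G$ together with the doubling property~\eqref{hyp3 prof}, one gets $\varphi\circ j_{1,H}\simeq\varphi\circ f_H=f_G\circ f_H^{-1}\circ f_H=f_G\simeq j_{1,G}$. Since $\Phi$ is doubling, it follows that $\psi\circ j_{1,H}=\Phi\circ(\varphi\circ j_{1,H})\simeq\Phi\circ f_G$, and the obstruction then gives $\Phi(f_G(n))\preccurlyeq f_G(n)$. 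But $\Phi(t)/t\to+\infty$ and $f_G\to+\infty$ force $\Phi(f_G(n))/f_G(n)\to+\infty$, contradicting this domination. Hence no $(f_G\circ f_H^{-1},\ld^0)$-integrable coupling from $G$ to $H$ can exist, which proves the theorem.
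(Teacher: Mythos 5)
Your proposal shares the paper's skeleton exactly: argue by contradiction, self-improve the integrability of the cocycle at the critical threshold to some $\psi$ strictly stronger than $\varphi=f_G\circ f_H^{-1}$, then feed $\psi$ into the isoperimetric obstruction of Theorem~\ref{th:phi} to contradict the identity $\varphi\circ j_{1,H}\simeq j_{1,G}$. The paper packages the self-improvement as Proposition~\ref{prop:ThresholdKeyProp}, proved via Lemma~\ref{lem:keylemmaThreshold} (a Borel--Cantelli argument producing a sequence $K_n\to\infty$ with $\sum_n K_n\varphi(n)\nu(|\alpha(g,\cdot)|_H=n)<\infty$ for $g$ in a set containing a dense countable subset), followed by an explicit construction of a \emph{subadditive} $\psi$ with $t\mapsto t/\psi(t)$ non-decreasing, and a Steinhaus-type argument using subadditivity of $\|\cdot\|_\psi$ to recover the uniform bound over the compact set $S_G$. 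Your observation that only the integrated constant $C_\psi$ of Proposition~\ref{prop:4} is needed --- so that a de la Vall\'ee Poussin upgrade of the single integrable function $(s,x)\mapsto\varphi(|\alpha(s,x)|_H)$ on $S_G\times X_H$ would suffice and the Steinhaus step could be skipped --- is a genuine simplification of that last part, if the rest went through.

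It does not yet go through: the step you defer is precisely the non-trivial content. Theorem~\ref{th:phi} (through Proposition~\ref{prop:4}) requires $t\mapsto t/\psi(t)$ to be non-decreasing, and de la Vall\'ee Poussin hands you a \emph{convex} superlinear $\Phi$; the composite $\Phi\circ\varphi$ has no reason to satisfy this monotonicity (already $\Phi(t)=t^3$ and $\varphi(t)=\sqrt t$ give a superlinear $\Phi\circ\varphi$). Arranging $\Phi$ to grow slowly enough that $\Phi(\varphi(t))/t$ is non-increasing while keeping $\Phi(t)/t\to\infty$ and $\int\Phi(\varphi(|\alpha|))<\infty$ is exactly what the paper's construction achieves, and it does so by abandoning the form $\Phi\circ\varphi$ altogether and building $\psi$ directly from the sequence $(K_n)$; you would need to supply an equivalent construction, and your preliminary tacit replacement of $\varphi$ by an equivalent function with $t/\varphi(t)$ non-decreasing (used implicitly when you balance $\theta(\varphi(t))$ against $t/\varphi(t)$) is asserted without proof --- hypotheses \eqref{hyp1 prof}--\eqref{hyp3 prof} give doubling and sublinearity, not this monotonicity. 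Two smaller points: your final contradiction ($\Phi(f_G(n))/f_G(n)\to\infty$ against $\Phi\circ f_G\preccurlyeq f_G$) uses $f_G\to\infty$, i.e.\ amenability of $G$, which is not among the hypotheses; the paper's formulation of the contradiction, as $\psi\preccurlyeq\varphi$ against $\varphi\not\succcurlyeq\psi$, avoids this. And the de la Vall\'ee Poussin step should be performed \emph{after} the reduction of Lemma~\ref{lem:HaarPreserving} (or one must check that the ergodic decomposition and the translation of fundamental domains preserve the improved integrability), since the proof of Theorem~\ref{th:phi} is run on the modified coupling.
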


About Theorem~\ref{th:volumegrowth intro} dealing with volume growth, we prove a similar statement, as in~\cite[Theorem~D]{correiaAbsenceQuantitativelyCritical2025}. This is Theorem~\ref{th:ThresholdGrowth intro} that we recall here.

\begin{theorem}\label{th:ThresholdGrowth}
    Let $G$ and $H$ be unimodular locally compact compactly generated groups. Assume that there exist two increasing functions $f_G$ and $f_H$ satisfying $f_G(n)\simeq V_G(n)$, $f_H(n)\simeq V_H(n)$ and the following assumptions as $t\to +\infty$:
		\begin{equation}\label{hyp1 growth}
			f_G^{-1}(t)=o\left (f_H^{-1}(t)\right ),
		\end{equation}
		\begin{equation}\label{hyp2 growth}
			\forall C>0,\ f_G^{-1}(Ct)=O\left (f_G^{-1}(t)\right ),
		\end{equation}
		\begin{equation}\label{hyp3 growth}
			\forall C>0,\ f_G^{-1}\circ f_H(Ct)=O\left (f_G^{-1}\circ f_H(t)\right ).
		\end{equation}
		Then there is no $(f_G^{-1}\circ f_H,\ld^0)$-integrable measure equivalence coupling from $G$ to $H$.
\end{theorem}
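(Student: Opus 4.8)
The plan is to argue by contradiction: assume a $(f_G^{-1}\circ f_H,\ld^0)$-integrable coupling from $G$ to $H$ exists, write $\varphi\coloneqq f_G^{-1}\circ f_H$, and reduce to a convenient normal form. By Lemma~\ref{lem:HaarPreserving} I may assume the coupling satisfies \ref{item:assumption1}--\ref{item:assumption3} and is ergodic; I normalise $\nu_H(X_H)=1$ and set $Y\coloneqq X_G\cap X_H$, which has positive measure. The two hypotheses \eqref{hyp2 growth} and \eqref{hyp3 growth} are exactly the regularity needed so that the multiplicative constants produced below -- those coming from $\nu_H(Y)$, from the doubling of $\varphi$, and from the return-time averaging -- can be absorbed without affecting the asymptotics; the single decisive hypothesis is the strictness \eqref{hyp1 growth}, $f_G^{-1}=o(f_H^{-1})$, which I would translate into $\varphi^{-1}(n)/n\to+\infty$ and into $V_H(Cn)=o(V_G(n))$ for every constant $C$.

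The central object is the per-scale cost integral
\[
I(n)\coloneqq\int_{X_H}\int_{R_{Y}^{G}(x)\cap B_G(1_G,n)}\varphi\!\left(|\alpha(g,x)|_H\right)\,d\lambda_G(g)\,d\nu_H(x),
\]
which I would bound from both sides. For the upper bound, uniform $\varphi$-integrability together with the cocycle identity gives $\int_{X_H}\varphi(|\alpha(g,x)|_H)\,d\nu_H(x)\le C_0\,|g|_G$ (the doubling analogue of \cite[Lemma~2.24]{delabieQuantitativeMeasureEquivalence2022}), whence by Fubini $I(n)\le C_0\bigl(nV_G(n)-\int_0^nV_G(r)\,dr\bigr)$. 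For the lower bound I use Property~\ref{item:assumption3}: $\alpha(\cdot,x)$ is Haar-measure preserving from $R_Y^G(x)$ onto its image, so the inner integral equals $\int_{\mathrm{image}}\varphi(|h|_H)\,d\lambda_H(h)$, and by the bathtub principle this is at least $\int_0^{V_H^{-1}(v(x,n))}\varphi\,dV_H$ where $v(x,n)=\lambda_G(R_Y^G(x)\cap B_G(1_G,n))$. Since $v\mapsto\int_0^{V_H^{-1}(v)}\varphi\,dV_H$ is convex, Jensen and Lemma~\ref{lem:returntime} yield $I(n)\ge\int_0^{\nu_H(Y)V_G(n)}\varphi\bigl(V_H^{-1}(u)\bigr)\,du$, which at the threshold simplifies to $\simeq\int_0^{f_G(n)}f_G^{-1}=nV_G(n)-\int_0^nV_G(r)\,dr$ up to a constant.

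Thus both bounds are of the exact order $nV_G(n)-\int_0^nV_G(r)\,dr$, and comparing them at a single scale only yields the tautology $1\lesssim C_0$. The hard part -- and the real content of the argument, following \cite[Theorem~D]{correiaAbsenceQuantitativelyCritical2025} -- is to upgrade this constant-level matching into a genuine contradiction, which cannot be done scale by scale because the finite integrability constant $C_0$ is free. The plan is instead to run the estimate across a geometric sequence of scales and to track, via the return-time structure in $H$ (Lemma~\ref{lem:returntime} applied to $H\act(X_G,\nu_G)$, together with the fact that $\beta(\cdot,x)$ is the measure-preserving inverse of $\alpha(\cdot,x)$ on the return sets), the mass of $\{g:|\alpha(g,x)|_H>m\}$; this produces a series whose convergence is equivalent to $\varphi$-integrability but whose terms are bounded below using $\varphi^{-1}(n)\gg n$. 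The strictness \eqref{hyp1 growth} forces this series to diverge, contradicting integrability, while \eqref{hyp2 growth}--\eqref{hyp3 growth} guarantee that the constants from the reduction and from the doubling of $\varphi$ do not interfere with the divergence. Packaging this constant-insensitive convergence/divergence dichotomy into a single statement is the rôle of the key proposition, and constructing it is the main obstacle I expect.
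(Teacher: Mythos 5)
There is a genuine gap, and you have in fact located it yourself: after setting up the per-scale cost integral $I(n)$ and matching the upper and lower bounds up to constants, you concede that the comparison is a tautology and that ``packaging the constant-insensitive convergence/divergence dichotomy into a single statement'' is the main obstacle you expect. That packaging is precisely the content of the proof, and your sketch of it (a multi-scale series whose divergence is forced by $\varphi^{-1}(n)\gg n$) is not constructed; no such series is exhibited, and it is not clear it can be, since at the threshold every single-scale and even every geometrically-summed estimate is consistent with a finite integrability constant $C_0$. So as written the proposal does not prove the theorem.

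The paper's route is different and avoids any direct quantitative analysis at the threshold. The decisive idea is an \emph{automatic self-improvement of integrability}: Proposition~\ref{prop:ThresholdKeyProp} shows that if a cocycle is $\varphi$-integrable with $\varphi$ sublinear and non-decreasing (which $\varphi=f_G^{-1}\circ f_H$ is, by \eqref{hyp1 growth}), then it is automatically $\psi$-integrable for some non-decreasing subadditive $\psi$ with $t\mapsto t/\psi(t)$ non-decreasing and $\varphi(t)\not\succcurlyeq\psi(t)$. The mechanism (Lemma~\ref{lem:keylemmaThreshold}) is elementary: since the tail sums $\sum_{n\geq N}\varphi(n)\,\nu(|\alpha(g,\cdot)|_H=n)$ tend to $0$, one can insert a sequence $K_n\to+\infty$ keeping $\sum_n K_n\varphi(n)\,\nu(|\alpha(g,\cdot)|_H=n)$ finite for a positive-measure (and dense) set of $g$, and a subadditivity argument \`a la~\cite[Appendix~A.2]{baderIntegrableMeasureEquivalence2013} upgrades this to uniformity over $S_G$. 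Once the coupling is known to be $(\psi,\ld^0)$-integrable, one simply applies the already-proven Theorem~\ref{th:volumegrowth} to $\psi$ (this is where subadditivity of $\psi$ is needed), obtaining $V_G(n)\preccurlyeq V_H(\psi^{-1}(n))$, hence $\psi\preccurlyeq f_G^{-1}\circ f_H=\varphi$ after using \eqref{hyp2 growth} and \eqref{hyp3 growth} to absorb the constants --- contradicting $\varphi\not\succcurlyeq\psi$. I would encourage you to abandon the direct threshold estimate and look for this ``strict improvement'' statement instead; it is the single missing idea in your plan.
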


Theorems~\ref{th:ThresholdProfile} and~\ref{th:ThresholdGrowth} are direct consequences of the following crucial proposition.

\begin{proposition}\label{prop:ThresholdKeyProp}
    Let $G$ and $H$ be non-discrete unimodular locally compact compactly generated groups, let $G\act (X,\nu)$ be a free measure preserving action on a standard finite-measured space. Let $\varphi\colon\R_+\to\R_+$ be a sublinear and non-decreasing function. If an $H$-valued cocycle $\alpha\colon G\times X\to H$ is $\varphi$-integrable, then there exists a map $\psi\colon\R_+\to\R_+$ satisfying the following properties:
    \begin{itemize}
        \item $\varphi(t)\not\succcurlyeq\psi(t)$;
        \item the maps $\psi$ and $t\mapsto t/\psi(t)$ are non-decreasing;
        \item $\psi$ is subadditive;
        \item the cocycle $\alpha$ is $\psi$-integrable.
    \end{itemize}
\end{proposition}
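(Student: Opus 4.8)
The plan is to construct $\psi$ explicitly from $\varphi$ and the integrability data so that it is strictly dominated by $\varphi$ in the sense $\varphi\not\succcurlyeq\psi$, yet remains regular enough (subadditive, with $t/\psi(t)$ non-decreasing) that $\varphi$-integrability upgrades to $\psi$-integrability. The natural idea is that the actual decay of the tail distribution of $|\alpha(s,\cdot)|_H$ is better than what a single $\varphi$-integrability bound forces, so there is ``room'' to insert a function $\psi$ that grows faster than $\varphi$ along a sequence but still controls the cocycle. First I would fix a compact generating set $S_G$ and, using $\varphi$-integrability, record that $\sup_{s\in S_G}\int_X \varphi(|\alpha(s,x)|_H)\,\mathrm{d}\nu(x)=:M<+\infty$. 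From this I would extract information on the measure of the super-level sets $\nu(\{x: |\alpha(s,x)|_H>t\})$, which by Markov's inequality are $O(1/\varphi(t))$.

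Next I would build $\psi$ by a diagonal/interpolation argument. The key point is that for the \emph{single} integrable function $x\mapsto\varphi(|\alpha(s,x)|_H)$, de la Vallée–Poussin's criterion (or an elementary direct construction) produces an increasing convex function $\Theta$ with $\Theta(t)/t\to+\infty$ such that $\int_X \Theta(\varphi(|\alpha(s,x)|_H))\,\mathrm{d}\nu(x)<+\infty$; uniformity over the finite-measure compact set $S_G$ is handled by integrating against $\mathrm{d}\lambda_G(s)$ over $S_G$ as in the definitions, or by taking a common $\Theta$ via a countable exhaustion argument. Setting $\psi:=\Theta\circ\varphi$ (suitably truncated/affinely corrected near $0$) would then make $\alpha$ automatically $\psi$-integrable. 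Because $\Theta$ grows superlinearly, $\psi=\Theta\circ\varphi$ satisfies $\varphi(t)=o(\psi(t))$ along the relevant scale, giving $\varphi\not\succcurlyeq\psi$; here sublinearity of $\varphi$ is what guarantees we still land in the regime where the required monotonicity of $t\mapsto t/\psi(t)$ can be arranged.

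The remaining structural conditions I would secure by a standard ``regularization'' step: replace the raw $\Theta\circ\varphi$ by its largest subadditive minorant with non-decreasing $t\mapsto\psi(t)/t$... more precisely, I would pass to a function $\psi$ whose graph is built so that $\psi$ is concave-like (subadditive, non-decreasing, with $t/\psi(t)$ non-decreasing), for instance by taking an infimum of affine majorants or by the construction used for isoperimetric-type profiles. Since lowering $\psi$ only \emph{helps} $\psi$-integrability (by monotonicity of the integral) and can be arranged to preserve $\varphi=o(\psi)$ along a sequence, all four bullet points survive simultaneously. The main obstacle I anticipate is the \textbf{tension} between wanting $\psi$ to grow \emph{strictly faster} than $\varphi$ (so that $\varphi\not\succcurlyeq\psi$) while remaining subadditive with $t/\psi(t)$ non-decreasing: subadditivity caps how fast $\psi$ may grow, so $\psi$ cannot beat $\varphi$ everywhere, only along a carefully chosen sequence $t_n\to+\infty$. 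Reconciling these two demands — superlinear gain from de la Vallée–Poussin against the concavity-type constraints — is where the delicate bookkeeping lies, and it is exactly why the statement asks only for $\varphi\not\succcurlyeq\psi$ (failure of domination along a sequence) rather than the stronger $\varphi\preccurlyeq\psi$ with $\varphi\not\simeq\psi$.
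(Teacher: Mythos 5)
Your core mechanism for producing $\psi$ --- the tail of $\varphi(|\alpha(s,\cdot)|_H)$ decays faster than bare integrability forces, so one can insert a slowly superlinear correction $\Theta$ (de la Vall\'ee--Poussin) and take $\psi\approx\Theta\circ\varphi$ before regularizing --- is essentially the paper's, which picks a sequence $K_n\to+\infty$ with $\sum_n K_n\,\varphi(n)\,\nu(|\alpha(g,\cdot)|_H=n)<+\infty$ and sets $\psi(n)\approx K_n\varphi(n)$ suitably smoothed. The genuine gap is at the point you dismiss in one clause: uniformity over $S_G$. The conclusion requires $\sup_{s\in S_G}\int_X\psi(|\alpha(s,x)|_H)\,\mathrm{d}\nu(x)<+\infty$, and $S_G$ is an uncountable compact set. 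Integrating against $\mathrm{d}\lambda_G(s)$ over $S_G$ only yields finiteness for $\lambda_G$-almost every $s$, not for the supremum; and a common de la Vall\'ee--Poussin function for the whole family $\{\varphi(|\alpha(s,\cdot)|_H)\}_{s\in S_G}$ would require uniform integrability, which does not follow from the boundedness in $\ld^1$ that you actually have. This is precisely what distinguishes the locally compact case from the discrete one (where $S_G$ is finite and a maximum suffices), and it is where the paper spends most of its effort: Lemma~\ref{lem:keylemmaThreshold} produces a set $A$ of \emph{positive Haar measure} containing a countable dense subset on which $g\mapsto\|\alpha(g,\cdot)\|_{\psi}$ is finite; subadditivity of $\psi$, the cocycle identity and invariance of $\nu$ make this quantity a subadditive, symmetric ``norm'' on $G$; and then the Steinhaus theorem ($E_{t_0}^{-1}E_{t_0}$ is a neighbourhood of $1_G$ once $\lambda_G(E_{t_0})>0$) together with a finite covering of $S_G$ by translates $gE_{2t_0}$, $g$ in the dense set, gives the uniform bound. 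Without an argument of this kind your proof does not close.

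A second, smaller issue: you correctly identify the tension between making $\psi$ beat $\varphi$ and keeping $\psi$ subadditive with $t/\psi(t)$ non-decreasing, but you leave it unresolved. Passing to the largest ``concave-like'' minorant of $\Theta\circ\varphi$ can a priori destroy $\varphi(t)\not\succcurlyeq\psi(t)$ (the minorant may fall back under $C\varphi(Ct)$), so the superlinear gain must be chosen to grow slowly enough and the interpolation done explicitly to certify that the regularized function still escapes domination by $\varphi$ along a subsequence; this is exactly the construction the paper imports from~\cite{correiaAbsenceQuantitativelyCritical2025}. Note also that the standing hypothesis $\varphi(Ct)=O(\varphi(t))$, built into the definition of $\varphi$-integrability, is what converts ``$\Theta(\varphi(t))/\varphi(t)\to+\infty$'' into the required failure of $\psi(t)\leq C\varphi(Ct)$ for every $C$; you should make that dependence explicit.
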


We are now able to prove Theorems~\ref{th:ThresholdProfile} and~\ref{th:ThresholdGrowth}, using this proposition. The latter will be proved after.

\begin{proof}[Proof of Theorem~\ref{th:ThresholdProfile}]
By Lemma~\ref{lem:nondiscrete}, we can assume that the groups are non-discrete. For $\varphi\coloneq f_G\circ f_h^{-1}$, let us assume by contradiction that there exists a $(\varphi,\ld^0)$-integrable measure equivalence coupling from $G$ to $H$. Then Proposition~\ref{prop:ThresholdKeyProp} implies that this coupling is also $(\psi,\ld^0)$-integrable with a map $\psi\colon\R_+\to\R_+$ such that $\psi$ and $t\mapsto t/\psi(t)$ are non-decreasing, and $\psi\not\preccurlyeq f_G\circ f_H^{-1}$. But Theorem~\ref{th:ThresholdProfile} implies $\psi\circ j_{1,H}\preccurlyeq j_{1,G}$, namely $\psi\preccurlyeq f_G\circ f_H^{-1}$ (see the end of the proof of~\cite[Theorem~B]{correiaAbsenceQuantitativelyCritical2025} for a justification), so we get a contradiction.
\end{proof}

\begin{proof}[Proof of Theorem~\ref{th:ThresholdGrowth}]
We proceed as in the last proof. Indeed, the map $\psi\colon\R_+\to\R_+$ provided by Proposition~\ref{prop:ThresholdKeyProp} is also subadditive, an assumption of Theorem~\ref{th:ThresholdGrowth} which enables us to get a contradiction.
\end{proof}

We now proceed to the proof of Proposition~\ref{prop:ThresholdKeyProp}. To this end, we have the following key lemma. In the case of finitely generated groups, this has been proved at the beginning of the proof of~\cite[Lemma~3.2]{correiaAbsenceQuantitativelyCritical2025}, which crucially uses finiteness of the generating sets. Here we manage to improve the proof in the non-discrete case.

\begin{lemma}\label{lem:keylemmaThreshold}
    Let $G$ and $H$ be non-discrete unimodular locally compact compactly generated groups, and let $\varphi\colon\R_+\to\R_+$ be a map. Given a free measure preserving action $G\act (X,\nu)$ on a standard finite measured space, let $\alpha\colon G\times X\to H$ be a $\varphi$-integrable $H$-valued cocycle. Then there exist a measurable subset $A$ of $G$ and a sequence $(K_n)_{n\geq 1}$ of positive integers such that:
    \begin{itemize}
        \item $\la_G(A)>0$ and $A$ contains a countable dense subset of $G$;
        \item $K_n\underset{n\to +\infty}{\longrightarrow}+\infty$;
        \item for every $g\in A$, the series 
        \begin{equation*}
            \sum_{n\geq 1}{K_n\ \varphi(n)\ \nu(|c(g,\cdot)|_{H}=n)}
        \end{equation*}
        converges.
    \end{itemize}
\end{lemma}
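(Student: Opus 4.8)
The plan is to study the function $F\colon G\to\R_+\cup\{+\infty\}$ defined by
$$F(g)=\int_X{\varphi(|\alpha(g,x)|_H)\,\mathrm{d}\nu(x)}=\sum_{n\geq 0}{\varphi(n)\,\nu(|\alpha(g,\cdot)|_H=n)},$$
the second equality holding because $|\cdot|_H$ is $\N$-valued. The whole point is to produce a single weight sequence $(K_n)$ with $K_n\to+\infty$ against which $F$ remains finite, simultaneously for a set $A$ of positive measure and for a prescribed countable dense subset of $G$. First I would show that $F$ is bounded on every ball $B_G(1_G,k)$; this is the step where finiteness of the generating set (used in the discrete case) is replaced by the doubling assumption on $\varphi$ together with measure-preservation of the action.

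For the boundedness, fix $g$ with $|g|_G\le k$ and write $g=s_1\cdots s_m$ with $m\le k$ and $s_j\in S_G\cup S_G^{-1}$. Iterating the cocycle identity gives $|\alpha(g,x)|_H\le\sum_{j=1}^m|\alpha(s_j,(s_{j+1}\cdots s_m)\cdot x)|_H$, so by monotonicity of $\varphi$, the bound $\sum_j a_j\le m\max_j a_j$, and the assumption $\varphi(mt)=O(\varphi(t))$,
$$\varphi(|\alpha(g,x)|_H)\le\varphi\Big(m\max_j|\alpha(s_j,(s_{j+1}\cdots s_m)\cdot x)|_H\Big)\le C_m\sum_{j=1}^m\varphi(|\alpha(s_j,(s_{j+1}\cdots s_m)\cdot x)|_H).$$
Integrating in $x$ and using that each $x\mapsto(s_{j+1}\cdots s_m)\cdot x$ preserves $\nu$, every term becomes $F(s_j)$, whence $F(g)\le C_m\,m\,M$ with $M\coloneq\sup_{s\in S_G\cup S_G^{-1}}F(s)<+\infty$; finiteness of $M$ is precisely $\varphi$-integrability together with the remark after Definition~\ref{def:IntCocycle} that $S_G$ may be replaced by $S_G^{-1}$. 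In particular $F$ is finite everywhere, and since $B_G(1_G,k)$ is compact of finite Haar measure, $F$ is $\la_G$-integrable on each ball.

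With this in hand I would construct $(K_n)$. Fix a ball $B\coloneq B_G(1_G,k_0)$; by joint measurability of $\alpha$ and Tonelli, the sequence $q(n)\coloneq\int_B\varphi(n)\,\nu(|\alpha(g,\cdot)|_H=n)\,\mathrm{d}\la_G(g)$ satisfies $\sum_n q(n)=\int_BF\,\mathrm{d}\la_G<+\infty$, so it is summable. Fix also a countable dense subset $D=\{d_i\}_{i\ge1}$ of $G$; each sequence $p_{d_i}(n)\coloneq\varphi(n)\,\nu(|\alpha(d_i,\cdot)|_H=n)$ is summable since $F(d_i)<+\infty$. Applying the standard diagonal argument to the countable family $\{q\}\cup\{p_{d_i}\}_{i\ge1}$ of nonnegative summable sequences yields positive integers $K_n\to+\infty$ with $\sum_n K_n q(n)<+\infty$ and $\sum_n K_n p_{d_i}(n)<+\infty$ for every $i$. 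By Tonelli again, $\int_B\big(\sum_n K_n\varphi(n)\,\nu(|\alpha(g,\cdot)|_H=n)\big)\,\mathrm{d}\la_G(g)=\sum_n K_n q(n)<+\infty$, so the set $A_0\coloneq\{g\in B:\sum_n K_n\varphi(n)\,\nu(|\alpha(g,\cdot)|_H=n)<+\infty\}$ has full measure in $B$. Setting $A\coloneq A_0\cup D$ gives a measurable set with $\la_G(A)\ge\la_G(A_0)>0$ containing the dense subset $D$, and by construction the required series converges for every $g\in A$.

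The only genuine difficulty is the boundedness of $F$ on balls in the non-discrete setting: in the discrete case one covers a ball by finitely many group elements, whereas here one must pass through the cocycle identity, exploit $\varphi(Ct)=O(\varphi(t))$ to absorb the number of factors $m$, and use measure-preservation to reduce every integral back to the generators. The remaining ingredients — the elementary fact that a summable sequence admits a diverging weight preserving summability, and its diagonal extension to countably many sequences — are routine, and combining the Tonelli argument (for positive measure) with the diagonal argument over $D$ (for density) is what lets a single $(K_n)$ serve the whole of $A$.
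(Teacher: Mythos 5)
Your proof is correct, but it reaches the conclusion by a genuinely different route from the paper's. The paper works pointwise: it introduces the tail-cutoff function $N(g,j)=\min\{N\ge 1:\sum_{n\ge N}\varphi(n)\,\nu(|c(g,\cdot)|_H=n)\le j^{-3}\}$ on a set $A_0$ of finite positive measure, chooses an increasing sequence $(N_j)$ so that $\sum_j\la_G(\{g\in A_0:N(g,j)>N_j\})<+\infty$ while simultaneously forcing $N(g_i,j)\le N_j$ for $i\le j$ along an enumeration of the dense set, then applies Borel--Cantelli to get the eventual tail bound for a.e.\ $g\in A_0$ and for every $g_i$, and finally sets $K_n=j$ on $[N_j,N_{j+1})$. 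You instead (i) prove that $F(g)=\int_X\varphi(|\alpha(g,x)|_H)\,\mathrm{d}\nu(x)$ is bounded on balls via the cocycle identity, the doubling of $\varphi$ and measure-preservation, (ii) integrate over a fixed ball to compress the positive-measure requirement into a single summable sequence $q$, (iii) run the weight-extraction lemma diagonally over the countable family $\{q\}\cup\{p_{d_i}\}_{i\ge 1}$, and (iv) convert back to pointwise a.e.\ convergence with Tonelli. Both arguments ultimately rest on the same elementary fact that a summable series admits a diverging weight preserving summability; your Tonelli step replaces the paper's Borel--Cantelli step and is arguably cleaner, while your step (i) makes explicit something the paper only asserts (``these quantities converge by $\varphi$-integrability'' for \emph{all} $g\in G$, which strictly speaking requires exactly the cocycle-identity argument you give, and in your version the stronger boundedness on balls is genuinely needed to make $q$ summable). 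Two small points you should still write out: the diagonal argument itself (e.g.\ apply the one-sequence lemma to $b_n=\sum_{i}2^{-i}a^{(i)}_n/(1+\|a^{(i)}\|_1)$ and observe that $a^{(i)}_n\le 2^i(1+\|a^{(i)}\|_1)\,b_n$), and the caveat that $\varphi(mt)\le C_m\varphi(t)$ is only guaranteed for large $t$, which is harmless because $\varphi$ is non-decreasing and $\nu$ is a probability measure, so the small-$t$ contribution is uniformly bounded.
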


\begin{proof}
    Let $D$ be a countable dense subset of $G$ and let us enumerate its elements: $D=\{g_1,g_2,g_3,\ldots\}$. Let $A_0$ be a measurable subset of $G$ such that $0<\la_G(A_0)<+\infty$. First note that we have
    \begin{equation*}
        \int_{X}{\varphi(|c(g,x)|_H)d\nu(x)}=\sum_{n\geq 0}{\varphi(n)\ \nu(|c(g,\cdot)|_{H}=n)}
    \end{equation*}
    for every $g\in G$, and these quantities converge by $\varphi$-integrability. We set
    \begin{equation*}
        N(g,j)\coloneq\min{\left\{N\geq 1\ \colon\sum_{n\geq N}{\varphi(n)\ \nu(|c(g,\cdot)|_{H}=n)}\leq\frac{1}{j^3}\right\}}
    \end{equation*}
    for every $g\in G$ and every integer $j\geq 1$. Then we have
    \begin{equation*}
        \la_G(\{g\in A_0\mid N(g,j)>N)\underset{N\to +\infty}{\longrightarrow} 0,
    \end{equation*}
    so that there exists an increasing sequence $(N_j)_{j\geq 1}$ of positive integers such that
    \begin{itemize}
        \item $N_j\underset{j\to +\infty}{\longrightarrow}+\infty$;
        \item the series $\sum_{j\geq 1}{\la_G(\{g\in A_0\mid N(g,j)>N_j)}$ converges;
        \item for every $j\geq 1$, for every $i\in\{1,\ldots,j\}$, $N(g_i,j)\leq N_j$.
    \end{itemize}
    First, given $i\geq 1$, we have
    $$\sum_{n\geq N_j}{\varphi(n)\ \nu(|c(g_i,\cdot)|_{H}=n)}\leq \frac{1}{j^3}$$
    for every $j\geq i$. Secondly, Borel-Cantelli lemma implies that for $\la_G$-almost every $g\in A_0$, let us say for every $g\in A_1$ with some $A_1\subset A_0$ satisfying $\la_G(A_0\setminus A_1)=0$, we have 
    $$\sum_{n\geq N_j}{\varphi(n)\ \nu(|c(g,\cdot)|_{H}=n)}\leq \frac{1}{j^3}$$
    for sufficiently large integers $j$.\par
    To conclude, we have proved that for every $g\in D\cup A_1$, we have
    \begin{equation*}
        \sum_{N_j\leq n\leq N_{j+1}-1}{j\ \varphi(n)\ \nu(|c(g,\cdot)|_{H}=n)}\leq \frac{1}{j^2}.
    \end{equation*}
    for every sufficiently large integer $j$. It now suffices to define $A=D\cup A_1$ and $K_n=j$ for every $n\in\{N_j,N_j+1,\ldots,N_{j+1}-1\}$ and we are done.
\end{proof}

\begin{proof}[Proof of Proposition~\ref{prop:ThresholdKeyProp}]
    Let $(K_n)_{n\geq 1}$ be the sequence provided by Lemma~\ref{lem:keylemmaThreshold}, together with $A\subset G$ of positive Haar measure as in its statement, and let us denote by $D$ a dense countable set contained in $A$. With the same method as in the proof of~\cite[Lemma~3.2]{correiaAbsenceQuantitativelyCritical2025}, we build a map $\psi\colon\R_+\to\R_+$ from this sequence, which satisfies the following properties:
    \begin{itemize}
        \item $\psi$ and $t\mapsto t/\psi(t)$ are non-decreasing;
        \item $\psi$ is subbadditive;
        \item $\varphi(t)\not\succcurlyeq\psi(t)$;
        \item for every $g\in A$, $\|\alpha(g,.)\|_{\psi}$ is finite.
    \end{itemize}
    Given a compact generating set $S_G$ of $G$, it now remains to prove that $\sup_{g\in S_G}{\|\alpha(g,.)\|_{\psi}}$ is finite. We use the same techniques as in~\cite[Appendix A.2 (before Lemma~A.1)]{baderIntegrableMeasureEquivalence2013}. For every $g\in G$, we abusively write
    $$\|g\|_{\psi}\coloneq\|\alpha(g,.)\|_{\psi}\in\R\cup\{+\infty\}.$$
    Note that the cocycle identity, the subadditivity of $\psi$ and the $G$-invariance of $\nu$ imply
    \begin{equation}\label{eq:subaddPsiNorm}
        \|g^{-1}\|_{\psi}=\|g\|_{\psi}\text{ and }\|gg'\|_{\psi}\leq\|g\|_{\psi}+\|g'\|_{\psi}
    \end{equation}
    for every $g,g'\in G$. Denoting
    \begin{equation*}
        E_t\coloneq\{g\in G\mid \|g\|_{\psi}<t\}
    \end{equation*}
    for every $t>0$, we have $\la_G\left (\bigcup_{t>0}{E_t}\right )\geq\la_G(A)>0$, so there exists $t_0>0$ such that $\la_G(E_{t_0})>0$. This implies that $E_{t_0}^{-1}\cdot E_{t_0}$ is a neighbourhood of $1_G$. But we also have $E_{t_0}^{-1}\cdot E_{t_0}=E_{t_0}\cdot E_{t_0}\subset E_{2t_0}$ using~\eqref{eq:subaddPsiNorm}, so $E_{2t_0}$ is a neighbourhood of $1_G$. By density of $D$ and compactness of $S_G$, there exists a finite subset $F$ of $D$ such that
    $$S_G\subset\bigcup_{g\in F}{gE_{2t_0}}.$$
    Once again by~\eqref{eq:subaddPsiNorm}, we obtain for every $s\in S_G$:
    $$\|s\|_{\psi}\leq 2t_0+\max_{g\in F}{\|g\|_{\psi}},$$
    and so $\sup_{s\in S_G}{\|s\|_{\psi}}<+\infty$. This completes the proof.
\end{proof}

%\paragraph{Data Availability:} Data sharing not applicable to this article as no datasets were generated or analysed during the current study.

%\paragraph{Conflicts of Interest:} The authors declare that there is no conflict of interest.

\printbibliography

{\bigskip
		\footnotesize
		
		%\noindent C.~Correia (corresponding author),
		\noindent C.~Correia,
		\textsc{Université Paris Cité, Institut de Mathématiques de Jussieu-Paris Rive Gauche, 75013 Paris, France}\par\nopagebreak\noindent
		\textit{E-mail address: }\texttt{corentin.correia@imj-prg.fr}}

{\bigskip
		\footnotesize
			\noindent J.~Paucar, \textsc{Pontificia Universidad Católica del Perú, Departamento de Ciencias, Sección Matemáticas, Lima, Perú}\par\nopagebreak\noindent
		\textit{E-mail address: }\texttt{jlpaucar@pucp.edu.pe}}
		
		%\noindent J.~Paucar, \textsc{Université Paris Cité, Institut de Mathématiques de Jussieu-Paris Rive Gauche, 75013 Paris, France}\par\nopagebreak\noindent
		%\textit{E-mail address: }\texttt{juanpaucar28@gmail.com}}

\end{document}